\newtheorem{theorem}{Theorem}[section]
\newtheorem{proposition}[theorem]{Proposition} 
\newtheorem{corollary}[theorem]{Corollary}
\newtheorem{lemma}[theorem]{Lemma}
\newtheorem{claim}[theorem]{Claim}
\newtheorem{definition}[theorem]{Definition}
\newtheorem{remark}[theorem]{Remark}
\numberwithin{equation}{section}
\begin{document}
\title{Convergence of K\"ahler to real polarizations \\ on flag manifolds via toric degenerations}\author{Mark D. Hamilton \and Hiroshi Konno}
\maketitle 
\begin{abstract}
In this paper we construct a family of complex structures on a complex flag manifold that converge to the real polarization coming from the Gelfand-Cetlin integrable system, in the sense that holomorphic sections of a prequantum line bundle converge to delta-function sections supported on the Bohr-Sommerfeld fibers.  
Our construction is based on a toric degeneration of flag varieties and a deformation of K\"ahler structure on toric varieties by symplectic potentials.
\end{abstract}

\section{Introduction}
Let $(M, \omega)$ be a $2n$-dimensional symplectic manifold. 
A prequantum line bundle $(L, h, \nabla)$ is a complex line bundle $L$ on $M$ with a Hermitian metric $h$ and a Hermitian connection $\nabla$, whose curvature equals $-2\pi\sqrt{-1}\, \omega$. 
Geometric quantization is a procedure to assign a certain vector space, which is called a quantum Hilbert space, to $(M, \omega)$.
To perform a geometric quantization procedure, we must choose a polarization, which is an integrable Lagrangian subbundle of the (complexification of the) tangent bundle $TM$ of $M$.  
Then the quantum Hilbert space $\mathcal{H}(P)$ for a polarization $P$ is naively a subspace  of (a certain completion of) the space of sections of $L$, consisting of covariantly constant sections along the polarization $P$.

The most common example of a polarization comes from an integrable complex structure $J$ on $M$ such that $(M, \omega, J)$ is a K\"ahler manifold.  In this case the anti-holomorphic tangent bundle $P_J=T^{0,1}M$ is a polarization, which we call a K\"ahler polarization. 
The quantum Hilbert space $\mathcal{H}(P_J)$ is the space of holomorphic sections $H^0(L,\overline{\partial}^J)$ with respect to the natural holomorphic structure  $\overline{\partial}^J$ on $L$ induced by $J$. 
 
Another type of polarization, called a real polarization, is given by a foliation of $M$ into Lagrangian submanifolds.  
A completely integrable system $\mu \colon M \to \mathbb{R}^n$ (which is assumed to be proper) defines a singular real polarization $P_\mu$, where $(P_\mu)_x$ is the tangent space of the fiber of $\mu$ at each point $x \in M$. 
We set $BS(\mu)=\{ p \in \mu(M)~|~ H^0((L,h,\nabla)|_{\mu^{-1}(p)})\ne 0 \}$, where $H^0((L,h,\nabla)|_{\mu^{-1}(p)})=\{ s \in \Gamma((L, h,\nabla)|_{\mu^{-1}(p)})~|~ \nabla s=0 \}$.
Namely, $p \in BS(\mu)$ if and only if $\mu^{-1}(p)$ is a Bohr-Sommerfeld fiber. 
Then the quantum Hilbert space $\mathcal{H}(P_\mu)$ is defined to be $\bigoplus_{p \in BS(\mu)}H^0((L,h,\nabla)|_{\mu^{-1}(p)}) $ \cite{S}. 

From the point of view of physics, the quantum Hilbert space should be independent of the choice of polarization. 
In particular, although K\"ahler and real polarizations seem to be quite different, the quantum Hilbert space for a K\"ahler polarization should be isomorphic to the one for a real polarization. 
There are several examples where this principle is observed to be true. 
A non-singular projective toric variety has a natural K\"ahler structure, and its moment map for the torus action induces a (singular) real polarization. 
It is well known that the dimension of the space of holomorphic sections of the prequantum line bundle is the number of lattice points in the image of the moment map, which is also the number of Bohr-Sommerfeld fibers in the variety.
This implies that the principle holds in this case.
In \cite{JW} Jeffrey-Weitsman showed that the principle also holds in the case of the moduli space of flat connections over a compact Riemann surface.

A flag manifold with an integral symplectic structure has a singular real polarization defined by the Gelfand-Cetlin system, which was introduced by Guillemin-Sternberg in \cite{GS}, as well as a natural K\"ahler polarization 
since it is a complex manifold.  
In \cite{GS} the authors studied the quantization of flag manifolds, and showed that the two polarizations give rise to quantizations with the same dimensions.
However, their proof did not give any sort of direct relationship between the quantizations; rather, they computed the dimensions of the quantizations by other means (representation-theoretical and combinatoric) and showed they are equal.

One way of approaching the principle of independence of polarization is the following, considered by Baier, Florentino, Mour\~ao and Nunes in \cite{BFMN}.
Fix a K\"ahler polarization $P_J$ and a real polarization $P_\mu$ on $(M,\omega)$ respectively. 
Then the principle can be understood naturally if there is a family $\{ P_{J_s} \}_{s \in [0, \infty)}$ of K\"ahler polarizations on $M$ with $P_{J_0}=P_J$ which converges to $P_\mu$ in the sense that there exists a basis $\{ \sigma^m_s \}_{m \in BS(P_\mu)}$ of $\mathcal{H}(P_{J_s})$ for each $s \in [0, \infty)$ such that, for each $m \in BS(P_\mu)$, $\sigma^m_s$ converges to a delta-function section supported on the Bohr-Sommerfeld fiber $\mu^{-1}(m)$ as $s$ goes to $\infty$. 
In \cite{BFMN}, the authors carried out such a construction in the case of a non-singular projective toric variety by changing symplectic potentials, an important notion in the deformation theory of toric K\"ahler structures due to Guillemin \cite{Gu1,Gu2} and Abreu \cite{Ab1,Ab2}.

In this paper we construct a family of K\"ahler polarizations on a flag manifold that converge to the real polarization coming from the Gelfand-Cetlin system.  
See Theorem \ref{mainth} for details. 
In doing so, we provide a direct relationship between the two quantizations.
Our construction is based on the construction due to \cite{BFMN} and the toric degeneration of a flag variety due to Kogan and Miller \cite{KM}. 
Originally, a toric degeneration of a flag variety was constructed in terms of representation theory \cite{GL,C}.
Later Kogan and Miller introduced deformed actions of a Borel subgroup on the space of matrices and described a toric degeneration of a flag variety explicitly.  
Moreover, they constructed a ``degeneration in stages'' of a flag variety to study the geometric meaning of the Gelfand-Cetlin basis of the irreducible representation of the unitary group. 
In \cite{NNU} Nishinou, Nohara and Ueda pointed out that through the degeneration in stages one can identify the Gelfand-Cetlin system on the flag manifold with the integrable system on the limiting toric variety.

Our construction of a family of K\"ahler polarizations on a flag manifold proceeds as follows. 
We start from a flag manifold $Fl_n$ embedded in the product of projective spaces $\mathbb{P}=\prod_{l=1, \dots,n-1}\mathbb{P}(\bigwedge^l \mathbb{C}^n)$. 
For each $(a_1, \dots, a_{n-1}) \in (\mathbb{Z}_{>0})^{n-1}$ we fix a prequantum line bundle on $\mathbb{P}$ inducing a natural symplectic structure on $Fl_n$.  
The toric degeneration of the flag variety $Fl_n$ due to \cite{KM} is a family of complex subvarieties $\{ V_t \}_{t \in \mathbb{C}}$ in $\mathbb{P}$, where $V_1= Fl_n$ and $V_0$ is a toric variety. 
Since all $V_t$ are diffeomorphic to each other for $t \ne 0$, the family $\{ V_t \}_{t \ne 0}$ can be considered as a family of K\"ahler structures on the flag manifold $Fl_n$.
On the other hand, there is a family of toric K\"ahler structures $\{V_{0,s} \}_{s \in [0, \infty)}$ on $V_0$ with $V_{0,0}=V_0$, as considered in \cite{BFMN} (explained above).
If we could identify $Fl_n$ with $V_{0, s}$ as a symplectic manifold, we could pull back the complex structures on $V_{0,s}$ to $Fl_n$.
However, the toric variety $V_0$ is not diffeomorphic to the flag manifold $Fl_n$. 

Instead, we consider a space $V_t$ that is an approximation to $V_0$ that is still diffeomorphic to the flag manifold $Fl_n$. 
We show that the deformation $\{V_{0,s} \}_{s \in [0, \infty)}$ can be realized as the restriction of a deformation of the ambient toric variety $\mathbb{P}$. 
The deformation of the ambient space induces a family of K\"ahler structures $\{ V_{t,s} \}_{s \in [0,\infty)}$ on $V_t$ with $V_{t,0}=V_t$ for each $t \in \mathbb{C}$. 
We develop a method to identify $V_{t,s}$ with $V_{t,0}=V_t$ as a symplectic manifold. 
Moreover, we identify $Fl_n$ with $V_t$ as a symplectic manifold by using the gradient-Hamiltonian flow (a notion that is due to Ruan~\cite{R}) along a path that is an approximation of the path for degeneration in stages. 
Hence we can pull the complex structure of $V_{t,s}$ back to $Fl_n$. 
We also lift this identification to the prequantum line bundle in order to pull back holomorphic sections. 
Thus we have a family of complex structures on the flag manifold with a fixed symplectic structure 
and a family of sections of the prequantum line bundle on the flag manifold, which are holomorphic with respect to the corresponding complex structure. 
Moreover, we give a precise estimate of these holomorphic sections, which allows us to prove that the holomorphic sections converge to delta-function sections supported on the Bohr-Sommerfeld fibers if we perform these two types of deformations simultaneously in an appropriate way.

The content of this paper is organized as follows. 
In Section 2 we state our main result. 
We review the results on a toric degeneration of a flag variety in Section 3. 
Then we recall the gradient-Hamiltonian flow and construct its lift to the line bundle in Section 4. 
In Section 5 we review toric K\"ahler structures of toric manifolds, in particular, their deformation due to \cite{BFMN}. 
In Section 6 we develop a method to identify submanifolds under the deformation of toric K\"ahler structures of the ambient toric manifolds. 
We also give an estimate of the change of holomorphic sections under this deformation. 
In Section 7 we prove the main result, constructing a family of complex structures on the flag manifold, and proving that holomorphic sections converge to delta-function sections supported on Bohr-Sommerfeld fibers. 

The first named author is supported by the JSPS Postdoctoral Fellowship for Foreign Researchers.
The second named author is supported by the JSPS Grant-in-Aid for Scientific Research (C), No. 19540067. 
The authors would like to thank Y.Nohara for discussions. 
\section{Main results}\label{main}
Let $GL_n$ and $B$ be the general linear group and its Borel subgroup consisting of upper triangular matrices with $\mathbb{C}$-coefficient respectively. 
The flag manifold is defined to be a complex manifold $Fl_n = GL_n/B$. 
Let $\Lambda_n$ be the set of increasing indexes $I=(i_1 < \dots < i_l)$ with $1 \le i_1, i_l \le n$.  
For $I=(i_1 < \dots < i_l) \in \Lambda_n$ and $V=(v_{ij}) \in GL_n$ we set $|I|=l$ and
\begin{align*}
P_I(V)= \det \begin{pmatrix}v_{i_1 1} & \ldots & v_{i_1 l} \\
                            \vdots & \ddots & \vdots \\
                            v_{i_l 1} & \ldots & v_{i_l l}\end{pmatrix}.
\end{align*}
Then the Plu\"cker embedding 
$$
\rho \colon Fl_n \to \mathbb{P}=\prod_{l=1}^{n-1} \mathbb{P}(\bigwedge^l \mathbb{C}^n)
$$
is defined by $[V] \mapsto ([p_I(V); |I|=1], \dots, [p_I(V); |I|=n-1])$,  
where $[x_I; |I|=l]$ is the homogeneous coordinate of $\mathbb{P}(\bigwedge^l \mathbb{C}^n)$.
Since the left $U(n)$-action on $M_n(\mathbb{C})$ commutes with the right $B$-action on $M_n(\mathbb{C})$, $U(n)$ acts on $Fl_n$ from the left.

Next we define a holomorphic line bundle on $Fl_n$ and a Hermitian connection on it. 
Let $H_l$ be the hyperplane bundle on $\mathbb{P}(\bigwedge^l \mathbb{C}^n)$. 
It has a natural Hermitian metric $h_l$ such that $\frac{\sqrt{-1}}{2 \pi}R^{\nabla^l} = \omega_l$, where $R^{\nabla^l}$ is the curvature of the Chern connection $\nabla_l$ for the Hermitian metric $h_l$, and $\omega_l \in \Omega^2(\mathbb{P}(\bigwedge^l \mathbb{C}^n))$ is the Fubini-Study form. 
Let $\pi_l \colon \mathbb{P} \to \mathbb{P}(\bigwedge^l \mathbb{C}^n)$ be the projection.
Fix $\mathbf{a}=(a_1, \dots, a_{n-1}) \in (\mathbb{Z}_{>0})^{n-1}$. 
Then we define a K\"ahler form $\omega_{\mathbb{P}}$ and a prequantum line bundle $(L^{\mathbb{P}}, h^{\mathbb{P}}, \nabla^{\mathbb{P}})$ on $\mathbb{P}$ by
$$
\omega_{\mathbb{P}}= \sum_{l=1}^{n-1}a_l \pi_l^* \omega_l \in \Omega^2(\mathbb{P}) , ~~(L^{\mathbb{P}}, h^{\mathbb{P}}, \nabla^{\mathbb{P}})= \bigotimes_{l=1}^{n-1} \pi_l^* (H_l, h_l, \nabla^l)^{ \otimes a_l}.
$$
Then $\nabla^{\mathbb{P}}$ is the Chern connection of $(L^{\mathbb{P}}, h^{\mathbb{P}})$ and satisfies $\frac{\sqrt{-1}}{2 \pi}R^{\nabla^{\mathbb{P}}} = \omega_{\mathbb{P}}$.    
We set $(L^{Fl_n}, h^{Fl_n}, \nabla^{Fl_n})=\rho^*(L^{\mathbb{P}}, h^{\mathbb{P}}, \nabla^{\mathbb{P}})$, that is, $L^{Fl_n}$ is a holomorphic line bundle on $Fl_n$ with a Hermitian metric $h^{Fl_n}$ and the Chern connection $\nabla^{Fl_n}$ whose first Chern form is $\rho^*\omega_{\mathbb{P}}$.
The $U(n)$-action on $Fl_n$ preserves $\rho^*\omega_{\mathbb{P}}$ with a moment map $\mu_{U(n)} \colon Fl_n \to \mathfrak{u}(n)^*$. 

Next we recall a certain completely integrable system on $Fl_n$. 
Consider $U(l)$ for $l=1, \dots, n-1$ as a subgroup of $U(n)$ defined by $U(l)= \{ \begin{pmatrix} A & O_{l,n-l} \\ O_{n-l,l} & E_{n-l} \end{pmatrix}\! \in \! U(n) \}$, where $O_{l,n-l} \! \in \! M_{l,n-l}(\mathbb{C})$ and $O_{n-l,l} \! \in \! M_{n-l,l}(\mathbb{C})$ are the zero matrices, $E_{n-l} \in M_{n-l}(\mathbb{C})$ is the unit element, and $A \in M_l(\mathbb{C})$. 
Let $\iota_l^* \colon \mathfrak{u}(n)^* \to \mathfrak{u}(l)^*$ be the dual map of the inclusion $\iota_l \colon \mathfrak{u}(l) \to \mathfrak{u}(n)$.
Define a map $\lambda_l^j \colon \mathfrak{u}(l) \to \mathbb{R}$ such that $\lambda_l^1(A) \ge \dots \ge \lambda_l^l(A)$ are eigenvalues of $-\sqrt{-1}A$ for $A \in \mathfrak{u}(l)$. 
We identify $\mathfrak{u}(l)$ with $\mathfrak{u}(l)^*$ by the invariant inner product. 
In \cite{GS} Guillemin and Sternberg proved that 
$$
\mu_{GC}=(\lambda_l^j \circ \iota_l^* \circ \mu_{U(n)}~;~ 1 \le l \le n-1,~ 1 \le j \le l) \colon Fl_n \to \mathbb{R}^d
$$
is a completely integrable system, where $d= \frac{1}{2} \dim_{\mathbb{R}}Fl_n = \frac{n(n-1)}{2}$. 
The completely integrable system $\mu_{GC} \colon Fl_n \to \mathbb{R}^d$ and its image $\Delta_{GC}=\mu_{GC}(Fl_n) \subset \mathbb{R}^d$ are called the Gelfand-Cetlin system and the Gelfand-Cetlin polytope respectively. 
Note that $\mu_{GC} \colon Fl_n \to \mathbb{R}^d$ is a continuous map and that it is smooth on $\mu_{GC}^{-1}(\mathrm{Int}\Delta_{GC})$, where $\mathrm{Int}\Delta_{GC}$ is the interior of $\Delta_{GC}$.
Moreover, $\mu_{GC}^{-1}(m)$ is a $d$-dimensional real torus for each $m \in \mathrm{Int}\Delta_{GC}$.
In \cite{GS} Guillemin and Sternberg also proved that, for $m \in \mathrm{Int}\Delta_{GC} \subset \mathbb{R}^d$, the fiber $\mu_{GC}^{-1}(m)$ is a Bohr-Sommerfeld fiber if and only if $m \in \mathrm{Int}\Delta_{GC} \cap \mathbb{Z}^d$
and that the number of the points $\Delta_{GC} \cap \mathbb{Z}^d$ coincides with the dimension of the space of holomorphic sections $H^0(L^{Fl_n}, \overline{\partial}^{Fl_n})$, where $\overline{\partial}^{Fl_n}$ is the holomorphic structure on $L^{Fl_n}$.
Namely the quantum Hilbert space for the K\"ahler polarization on $Fl_n$ is isomorphic to the one for the real polarization $P_{\mu_{GC}}$ coming from the Gelfand-Cetlin system $\mu_{GC}$.

In this paper we construct a family of complex structures $\{ J_s \}_{s \in [0, \infty)}$ on $Fl_n$ such that the family of K\"ahler polarizations $\{ P_{J_s} \}_{s \in [0, \infty)}$ converge to the real polarization $P_{\mu_{GC}}$ in the following sense. 
\begin{theorem}\label{mainth}
Let $\mathbb{F}$ and $J_{\mathbb{F}}$ be the underlying $C^\infty$-manifold and the complex structure of $Fl_n$ respectively. 
Set $\omega_{\mathbb{F}}=\rho^* \omega_{\mathbb{P}} \in \Omega^2(\mathbb{F})$ and $d= \dim_{\mathbb{R}}\mathbb{F} =\frac{n(n-1)}{2}$.
Let $(L^{\mathbb{F}},h^{\mathbb{F}},\nabla^{\mathbb{F}})$ be the underlying $C^\infty$ line bundle of $(L^{Fl_n}, h^{Fl_n}, \nabla^{Fl_n})$.  
Then there exists a one parameter family of $\{ J_s \}_{s \in [0, \infty)}$ of complex structures on $\mathbb{F}$ which satisfies the following:
\newline
$(1)$ $J_s$ is continuous with respect to the parameter $s \in [0,\infty)$.
\newline
$(2)$ $J_0=J_{\mathbb{F}}$
\newline
$(3)$ $(\mathbb{F},\omega_{\mathbb{F}},J_s)$ is a K\"ahler manifold for each $s \in [0,\infty)$. 
So, for each $s \in [0,\infty)$, the Hermitian line bundle $(L^{\mathbb{F}},h^{\mathbb{F}},\nabla^{\mathbb{F}})$ induces the holomorphic structure $\overline{\partial}^s$ on $L^{\mathbb{F}}$. 
\newline
$(4)$ For each $s \in [0,\infty)$ there exists a basis $\{ \sigma^m_s ~|~ m \in \Delta_{GC} \cap \mathbb{Z}^d \}$ of the space of holomorphic sections $H^0(L^{\mathbb{F}}, \overline{\partial}^s)$ such that, for each $m \in \mathrm{Int}\Delta_{GC} \cap \mathbb{Z}^d$, the section $\frac{\sigma^m_s}{\Vert \sigma^m_s \Vert_{L^1(\mathbb{F})}}$ converges to a delta-function section supported on the Bohr-Sommerfeld fiber $\mu_{GC}^{-1}(m) $ in the following sense:
there exist a covariantly constant section $\delta_m^{\mathbb{F}}$ of $(L^{\mathbb{F}}, h^{\mathbb{F}}, \nabla^{\mathbb{F}})|_{\mu_{GC}^{-1}(m)}$ and a measure $d \theta_m$ on $\mu_{GC}^{-1}(m)$ such that, for any smooth section $\phi$ of the dual line bundle $(L^{\mathbb{F}})^*$, the following holds
$$
\lim_{s \to \infty} \int_{\mathbb{F}} \biggl\langle \phi, \frac{\sigma^m_s}{\Vert \sigma^m_s \Vert_{L^1(\mathbb{F})}} \biggr\rangle \frac{\omega_{\mathbb{F}}^d}{d !} = \int_{\mu_{GC}^{-1}(m)} \langle \phi , \delta_m^{\mathbb{F}} \rangle d \theta_m.
$$
\end{theorem}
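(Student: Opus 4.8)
\emph{Strategy.} I would realize the family $\{J_s\}$ on $\mathbb{F}$ as pulled back from a two‑parameter family of K\"ahler manifolds $\{V_{t,\tilde s}\}$, where $t$ is the Kogan--Miller degeneration parameter (so $V_1=Fl_n$, $V_0$ toric) and $\tilde s$ is the Baier--Florentino--Mour\~ao--Nunes deformation parameter on the ambient toric manifold $\mathbb{P}=\prod_l\mathbb{P}(\bigwedge^l\mathbb{C}^n)$, and then restrict to a path $s\mapsto (t(s),\tilde s(s))$ with $(t(0),\tilde s(0))=(1,0)$, $t(s)\to 0$ and $\tilde s(s)\to\infty$ at compatible rates. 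The first ingredient is the symplectic identification of $\mathbb{F}$ with $V_t$: using the gradient--Hamiltonian flow (following Ruan \cite{R}) along a path that approximates the Kogan--Miller ``degeneration in stages'', one obtains for each $t\in(0,1]$ a symplectomorphism $\phi_t\colon(\mathbb{F},\omega_{\mathbb{F}})\to V_t$ with $\phi_1=\mathrm{id}$, together with a lift $\tilde\phi_t$ to the prequantum line bundle preserving $h^{\mathbb{F}}$ and $\nabla^{\mathbb{F}}$ (this is what Section 4 provides). The structural input of \cite{NNU} is that under $\phi_t$ the Gelfand--Cetlin system $\mu_{GC}$ degenerates, as $t\to 0$, to the toric moment map of $V_0$, so that $\Delta_{GC}$ is identified with the moment polytope of $V_0$ and the Bohr--Sommerfeld fibres $\mu_{GC}^{-1}(m)$, $m\in\mathrm{Int}\Delta_{GC}\cap\mathbb{Z}^d$, with toric Bohr--Sommerfeld fibres.

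The second ingredient is the \cite{BFMN} deformation by symplectic potentials on the toric manifold $\mathbb{P}$: for $\tilde s\ge 0$ it produces toric complex structures $J^{\mathbb{P}}_{\tilde s}$, all K\"ahler for $\omega_{\mathbb{P}}$, with $J^{\mathbb{P}}_0$ the standard one, such that the monomial holomorphic sections of $L^{\mathbb{P}}$ attached to lattice points have pointwise $h^{\mathbb{P}}$‑norms that develop an increasingly sharp peak along the torus fibre over the corresponding lattice point, with explicit estimates. The role of Section 6 is to transfer this ambient deformation to the subvariety $V_t\subset\mathbb{P}$: one constructs a family $\{V_{t,\tilde s}\}_{\tilde s\ge 0}$ of K\"ahler submanifolds with $V_{t,0}=V_t$, together with symplectomorphisms $V_t\to V_{t,\tilde s}$ and their line‑bundle lifts, and an estimate controlling how the restricted monomial sections change under this transfer. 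Pulling the complex structure of $V_{t(s),\tilde s(s)}$ back to $\mathbb{F}$ via the composite of the line‑bundle symplectomorphisms of Sections 4 and 6 defines $J_s$; continuity in $s$, the normalization $J_0=J_{\mathbb{F}}$ (at $t=1$, $\tilde s=0$ all maps are the identity and $V_{1,0}=Fl_n$), and the K\"ahler property (symplectomorphisms preserve $\omega_{\mathbb{F}}$, and pull‑backs of K\"ahler structures are K\"ahler) then give items (1)--(3) formally, and induce the holomorphic structures $\overline{\partial}^s$ on $L^{\mathbb{F}}$.

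For (4), I would take $\sigma^m_s$ to be the pull‑back to $\mathbb{F}$, by the lifts of Sections 4 and 6, of the restriction to $V_{t(s),\tilde s(s)}$ of the monomial holomorphic section of $(L^{\mathbb{P}},J^{\mathbb{P}}_{\tilde s(s)})$ attached to $m$. That these form a basis of $H^0(L^{\mathbb{F}},\overline{\partial}^s)$ indexed by $\Delta_{GC}\cap\mathbb{Z}^d$ follows by combining the Guillemin--Sternberg count $\dim H^0(L^{Fl_n},\overline{\partial}^{Fl_n})=\#(\Delta_{GC}\cap\mathbb{Z}^d)$ with linear independence along the degeneration (the restricted monomials degenerate as $t\to 0$ to the toric monomial sections on $V_0$, which are independent). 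The convergence is then a Laplace‑type asymptotic: the estimate of Section 6 together with the \cite{BFMN} concentration on $\mathbb{P}$ and the $t\to 0$ identification of fibres shows that $|\sigma^m_s|_{h^{\mathbb{F}}}/\Vert\sigma^m_s\Vert_{L^1(\mathbb{F})}$ becomes exponentially concentrated on a shrinking tubular neighbourhood of $\mu_{GC}^{-1}(m)$, while along that fibre $\sigma^m_s/|\sigma^m_s|$ approaches a covariantly constant section $\delta^{\mathbb{F}}_m$ of $(L^{\mathbb{F}},h^{\mathbb{F}},\nabla^{\mathbb{F}})|_{\mu_{GC}^{-1}(m)}$, which exists precisely because $m$ satisfies the Bohr--Sommerfeld condition. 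Pairing with a smooth test section $\phi$ of $(L^{\mathbb{F}})^*$ and passing to the limit picks out $\int_{\mu_{GC}^{-1}(m)}\langle\phi,\delta^{\mathbb{F}}_m\rangle\,d\theta_m$, where $d\theta_m$ is the limiting measure coming from the symplectic volume concentrating on the fibre.

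\textbf{Main obstacle.} The delicate point is the simultaneous control of the two limits in (4). One must choose the path $(t(s),\tilde s(s))$ so that the $O(t)$ discrepancy between $V_t$ and the genuinely toric $V_0$ --- including the distortion of the gradient--Hamiltonian symplectomorphism and its line‑bundle lift as $t\to 0$ (where the central fibre is singular), and the error introduced in transferring the ambient deformation to $V_t$ in Section 6 --- is dominated by the exponentially sharp \cite{BFMN} concentration as $\tilde s\to\infty$. This forces $t(s)\to 0$ at a rate calibrated against $\tilde s(s)\to\infty$, and requires the estimates of Sections 4 and 6 to be uniform on compact subsets of $\mu_{GC}^{-1}(\mathrm{Int}\Delta_{GC})$ and to degenerate in a controlled way near $\partial\Delta_{GC}$, where the Gelfand--Cetlin torus fibration itself degenerates; establishing such uniform estimates is the heart of the argument.
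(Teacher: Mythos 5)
Your proposal matches the paper's proof: the same combination of the Kogan--Miller degeneration in stages, the gradient--Hamiltonian flow identification $\mathbb{F}\to V_t$ and its line‑bundle lift, the BFMN symplectic‑potential deformation on the ambient $\mathbb{P}$ transferred to $V_t$ (Propositions~\ref{identify1}--\ref{identify2}), pull‑back of monomial sections, and the simultaneous tuning $t(s)\to 0$ against $s\to\infty$, and you correctly single out the control of the two limits as the heart of the argument. The paper's execution of (4) hinges on two devices you would still need to find: the symplectic potential is perturbed by a convex function of the special form $\underline\nu\circ\iota_{GC}^*$ pulled back along the Gelfand--Cetlin subtorus, which makes the ambient deformation fix $V_0$ as a subset (Proposition~\ref{toric-subm}) so the BFMN concentration descends to sharp $C^0$- and $L^1$-estimates on $V_0$ (Proposition~\ref{sigma-m}); and a second gradient--Hamiltonian flow $\Phi_t\colon V_{t,symp}^\circ\to V_{0,symp}^\circ$ is used to transport those estimates to $V_t$, with the integral split over $U_t=\Phi_t^{-1}(U_0)$ and its complement and $t(s)$ chosen piecewise on $[n,n+1]$.
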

\begin{remark} 
By a similar argument as in the proof of Theorem \ref{mainth} we can also prove that the support of the section $\sigma^m_s$ converges to $\mu_{CG}^{-1}(m)$ as $s \to \infty$ for any $m \in (\Delta_{GC} \setminus \mathrm{Int}\Delta_{GC}) \cap \mathbb{Z}^d$. 
However, we cannot prove that $\sigma^m_s$ converges to a delta-function section for $m \in (\Delta_{GC} \setminus \mathrm{Int}\Delta_{GC}) \cap \mathbb{Z}^d$, because we do not yet have a sufficient description of $\mu_{CG}^{-1}(m)$. 
\end{remark}
\section{Toric degeneration of flag varieties}
In \cite{KM}  Kogan and Miller constructed  a toric degeneration of a flag variety based on a deformed Borel action. 
They also introduced degeneration in stages of a flag variety. 
In this section we review their construction and recall its symplectic geometric aspects due to Nishinou, Nohara and Ueda \cite{NNU}.

\subsection{Deformed Borel action and toric degeneration}\label{tg}
First we define the right action $\bullet $ of the product group $(GL_n)^n$ on $M_n(\mathbb{C})$ by 
$$
V \bullet g= \begin{pmatrix} \mathbf{v}_1 g_1 \\
                             \vdots \\
                          \mathbf{v}_n g_n   \end{pmatrix} 
~\text{for $V \! = \! \begin{pmatrix} \mathbf{v}_1 \\
                             \vdots \\
                          \mathbf{v}_n   \end{pmatrix} \! \in \! M_n(\mathbb{C}) $ and $g \! = \! (g_1, \dots, g_n) \! \in \! (GL_n)^n$.}
$$
Set $M_n(\mathbb{C}^\times)= \{ (a_{ij}) \in M_n(\mathbb{C})~|~ a_{ij} \ne 0 ~\text{for $i,j=1, \dots, n$} \}$. 
Define a map $\iota \colon M_n(\mathbb{C}^\times) \to (GL_n)^n$ by 
$$
\iota((a_{ij}))=\left( 
\left(\begin{smallmatrix} a_{11} & & O \\
                  & \ddots & \\
                  O &  & a_{1n} \end{smallmatrix}\right), \dots, 
\left(\begin{smallmatrix} a_{n1} & & O \\
                                    & \ddots & \\
                                   O &  & a_{nn} \end{smallmatrix}\right)\right).
$$
Note that $\iota(M_n(\mathbb{C}^\times))$ is the maximal torus of $(GL_n)^n$.  
We also set 
$$
T_{GC}= \left\lbrace \left. \iota \left( \left(\begin{smallmatrix} 1 & & & 1 \\
              a_{21} &   & \ddots &  \\
         \vdots  & \ddots & &                  \\
                  a_{n1} & \dots &  a_{nn\!-\!1} & 1 \end{smallmatrix}\right)\right) ~ \right\vert \left(\begin{smallmatrix} 1 & & & 1 \\
              a_{21} &   & \ddots &  \\
         \vdots  & \ddots & &                  \\
                  a_{n1} & \dots &  a_{nn\!-\!1} & 1 \end{smallmatrix}\right) \in M_n(\mathbb{C}^\times)  \right\rbrace .
$$
We also define a $k$-dimensional algebraic subtorus $T_{GC}^{(k)}$ of $T_{GC}$ by
$$T_{GC}^{(k)}= \{ \iota((a_{ij})) ~|~ (a_{ij}) \in M_n(\mathbb{C}^\times), ~~\text{$i =k\!+\!1$ and $j \le k$ if $a_{ij}\ne 1$} \}.$$
Then we have 
$$
T_{GC}= \{ 1 \} \times  T_{GC}^{(1)} \times \dots \times T_{GC}^{(n-2)} \times T_{GC}^{(n-1)}.
$$ 

Next we define the deformed Borel action as follows. 
For $t \in \mathbb{C}^\times$ we define $t^\omega \in M_n(\mathbb{C}^\times)$ by
\begin{equation}\label{omega}
(t^\omega)_{ij}=t^{\omega_{ij}},  ~~\text{ where $\omega_{ij}= \begin{cases}3^{i-j-1} & \text{if $i>j$,} \\ 0 & \text{if $i \le j$.}\end{cases}$}
\end{equation}
In the above $(t^\omega)_{ij}$ is the $(i,j)$-component of $t^\omega \in M_n(\mathbb{C}^\times)$.
Then we define the deformed action $\bullet_t$ of $B$ on $M_n(\mathbb{C})$ by 
$$
V \bullet_t b = V \bullet \{ \iota(t^\omega) (b,\dots,b) (\iota(t^\omega))^{-1} \}, 
$$
where $\iota(t^\omega), (b,\dots,b), \iota(t^\omega)^{-1} \in (GL_n)^n$.

Let $\mathbb{C}[v_{ij}~|~1 \le i,j \le n]$ be the coordinate ring of $M_n(\mathbb{C})$. 
Let $U \subset B$ the subgroup consisting of the matrices with 1's on the diagonals. 
Then the ring of $U$-invariant functions $\mathbb{C}[v_{ij}~|~1 \le i,j \le n]^U$ for the deformed action $\bullet_t$ of $U$ is generated by the deformed Plu\"cker coordinates
$$
\{ q_I(V,t)= d_I(t^\omega)^{-1}p_I(V \bullet \iota(t^\omega))~|~ I \in \Lambda_n \},  ~~~\text{where $d_I(t^\omega)= \prod_{k=1}^{|I|} (t^\omega)_{i_k k}.$}
$$
From the definition of $\omega \in M_n(\mathbb{Z})$ we see that $q_I(V,t)$ is a polynomial of $v_{ij}~(1 \le i,j \le n)$ and $t$. 
Moreover, the deformed action $\bullet_t$ can be naturally extended to the case $t=0$. 
Thus we have a quotient $Fl_n(t)= M_n(\mathbb{C})/\!/_t B$ for all $t \in \mathbb{C}$, where the right hand side is a GIT quotient by the deformed action $\bullet_t$. 
We also have a family $f \colon (M_n(\mathbb{C}) \times \mathbb{C})/\!/B \to \mathbb{C}$ with $f^{-1}(t)=Fl_n(t)$. 
$Fl_n(1)$ is nothing but the flag variety $Fl_n$.
Note that each $Fl_n(t)$ is embedded in $\mathbb{P}$ by the deformed Plu\"cker embedding $\rho_t \colon Fl_n(t) \to \mathbb{P}$, which is defined by $[V] \mapsto ([q_I(V,t); |I|=1], \dots, [q_I(V,t); |I|=n-1])$ as in the case of the usual Plu\"cker embedding. 
In \cite{KM} Kogan and Miller proved the following, based on the argument in \cite{GL}.
\begin{proposition}\label{propkm} 
$(1)$ 
The family $f \colon (M_n(\mathbb{C}) \times \mathbb{C})/\!/B \to \mathbb{C}$ is flat. 
\newline
$(2)$ $Fl_n(t)$ is biholomorphic to $Fl_n$ for any $t \in \mathbb{C}^\times$. 
Moreover, $Fl_n(0)$ is a toric variety on which the torus $T_{GC}$ acts with an open dense orbit. 

\end{proposition}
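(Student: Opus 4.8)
The plan is to prove $(1)$ and $(2)$ by essentially unrelated arguments, and within $(2)$ to separate the generic case $t\ne 0$ from the special fibre $t=0$.

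\smallskip\noindent\emph{Flatness.} I would argue structurally rather than compute. The total space $(M_n(\mathbb{C})\times\mathbb{C})/\!/B$ is formed from the polynomial ring $\mathbb{C}[v_{ij},t]$ by first taking the (finitely generated) ring of $U$-invariants for the deformed action and then quotienting by the torus $B/U$; since every ring that occurs is a subring of the domain $\mathbb{C}[v_{ij},t]$, the total space is integral. The morphism $f$ is induced by the $B$-invariant projection $(V,t)\mapsto t$ and is non-constant (indeed surjective, since $f^{-1}(t)=Fl_n(t)\ne\emptyset$ for every $t$). A finite-type morphism from an integral scheme to a smooth curve is automatically flat once it is non-constant, because the only associated point of the source is its generic point and that maps to the generic point of $\mathbb{C}$; hence $f$ is flat, and in particular all fibres are equidimensional of dimension $d=\dim Fl_n$. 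Alternatively one can follow \cite{GL}: a straightening law / Gr\"obner basis for the Plücker relations of $Fl_n$ whose $\omega$-initial terms generate precisely the toric ideal of $Fl_n(0)$ shows that all $Fl_n(t)$ share a single Hilbert polynomial, which both forces flatness of the projective family and supplies the toric description used below.

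\smallskip\noindent\emph{The case $t\ne 0$.} Since $\iota(1^\omega)=(E_n,\dots,E_n)$, the action $\bullet_1$ is ordinary right multiplication, and $\bullet_t$ is conjugate to it: the automorphism $\phi_t\colon M_n(\mathbb{C})\to M_n(\mathbb{C})$, $\phi_t(V)=V\bullet\iota(t^\omega)$, is invertible (as $\iota(t^\omega)$ lies in the maximal torus of $(GL_n)^n$), and the identity $V\bullet(gh)=(V\bullet g)\bullet h$ gives $\phi_t(V\bullet_t b)=\phi_t(V)\,b=\phi_t(V)\bullet_1 b$, so $\phi_t$ is a $B$-equivariant isomorphism from $(M_n(\mathbb{C}),\bullet_t)$ to $(M_n(\mathbb{C}),\bullet_1)$. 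Because $q_I(V,t)=d_I(t^\omega)^{-1}p_I(\phi_t(V))$, the deformed embedding $\rho_t$ equals $\rho\circ\phi_t$ followed by the projective automorphism of $\mathbb{P}$ rescaling each $x_I$ by $d_I(t^\omega)^{-1}$; this automorphism lies in the torus acting on $\mathbb{P}$, so it preserves semistability and the K\"ahler class $\omega_{\mathbb{P}}$. Hence $\phi_t$ descends to a biholomorphism $Fl_n(t)\xrightarrow{\ \sim\ }Fl_n$.

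\smallskip\noindent\emph{The case $t=0$.} The heart of the matter is that the deformed Plücker coordinates specialise to monomials. For $I=(i_1<\dots<i_l)$, expanding the determinant,
\[
p_I\bigl(V\bullet\iota(t^\omega)\bigr)=\sum_{\sigma\in S_l}\operatorname{sgn}(\sigma)\,t^{\sum_k\omega_{i_k\sigma(k)}}\prod_{k=1}^{l}v_{i_k\sigma(k)},\qquad d_I(t^\omega)=t^{\sum_k\omega_{i_k k}}.
\]
The choice $\omega_{ij}=3^{\,i-j-1}$ for $i>j$ is made precisely so that, for every increasing index $I$, the diagonal term $\sigma=\mathrm{id}$ is the \emph{unique} term of minimal $t$-exponent; this is an elementary estimate based on $3^m>3^0+\dots+3^{m-1}$. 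Consequently $q_I(V,0)=\prod_{k=1}^{l}v_{i_k k}=:M_I$, a monomial. Since the deformed $U$-invariant ring is generated by the $q_I(V,\cdot)$ and quotient formation commutes with setting $t=0$, the homogeneous coordinate ring of $\rho_0(Fl_n(0))\subset\mathbb{P}$ is the monomial subalgebra $\mathbb{C}[\,M_I:I\in\Lambda_n,\ |I|\le n-1\,]\subseteq\mathbb{C}[v_{ij}]$, an affine semigroup ring; hence $Fl_n(0)$ is a toric variety with binomial defining ideal. Tracking the characters by which the residual torus acts on the $M_I$ (they involve only the variables $v_{ij}$ with $i\ge j$) identifies that torus with $T_{GC}$, the identification being an isomorphism by the dimension count $\dim T_{GC}=\sum_{k=1}^{n-1}k=\tfrac{n(n-1)}{2}=d=\dim Fl_n(0)$ (the last equality from $(1)$); in particular $T_{GC}$ acts on $Fl_n(0)$ with a dense orbit.

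\smallskip\noindent\emph{Main obstacle.} The real content sits in the case $t=0$: the statement $q_I(V,0)=M_I$ is easy to believe, but proving it uniformly over all flag minors is exactly what the very particular weight $\omega_{ij}=3^{\,i-j-1}$ is designed to accomplish. After that one still faces the GIT bookkeeping --- checking that forming the quotient commutes with restriction to $t=0$, so that $f^{-1}(0)$ is all of $\mathrm{Proj}$ of the monomial algebra rather than a proper subvariety, and identifying the residual torus precisely with $T_{GC}$ --- for which I would follow the arguments of \cite{KM} (which in turn build on \cite{GL}) rather than reprove them here.
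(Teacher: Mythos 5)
The paper itself does not prove this proposition: it cites Kogan--Miller \cite{KM} (building on \cite{GL}) and then adds two remarks — for $t\ne 0$, that $Fl_n(t)=GL_n(t)/_t B$ is a geometric quotient biholomorphic to $Fl_n$, and for $t=0$, that $T_{GC}$ acts on $Fl_n(0)$ because the $\bullet$-action of $T_{GC}$ commutes with $\bullet_0$ of $B$. Your treatment of the $t\ne 0$ case (the $B$-equivariant automorphism $\phi_t(V)=V\bullet\iota(t^\omega)$ intertwining $\bullet_t$ with $\bullet_1$, inducing a biholomorphism after a diagonal rescaling of the Pl\"ucker coordinates) is the same argument as the paper's, just unwound more explicitly. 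Where you diverge, you supply \emph{more} than the paper does. For flatness you give a clean abstract argument (integrality of the total space plus dominance onto the smooth curve $\mathbb{C}$), as an alternative to the Hilbert-polynomial/SAGBI-basis route of \cite{GL,KM} which the paper silently invokes; both are valid, and your version has the virtue of not requiring the initial-term computation up front. For the special fibre you actually indicate the mechanism behind the toric degeneration — that the weight $\omega_{ij}=3^{i-j-1}$ forces the diagonal term of each flag minor to be the unique $t$-initial term, so $q_I(V,0)$ becomes the monomial $\prod_k v_{i_k k}$ — which the paper does not record at all.

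Two places where your sketch is a bit glib, though you honestly flag that you are deferring to \cite{KM} for them. First, "elementary estimate based on $3^m>3^0+\dots+3^{m-1}$" undersells the work: verifying that $\sigma=\mathrm{id}$ gives the strict minimum of $\sum_k\omega_{i_k\sigma(k)}$ over all of $S_l$ is not a one-line adjacent-transposition argument, since some transpositions leave the weight unchanged (e.g.\ when both entries involved sit on or above the diagonal); one needs a more careful ordering of the exchanges. Second, the dimension count $\dim T_{GC}=d$ alone does not identify the dense torus with $T_{GC}$; one still has to check that $T_{GC}\to\mathrm{Aut}(Fl_n(0))$ has trivial kernel (or equivalently that the characters by which $T_{GC}$ scales the $M_I$ generate the full character lattice). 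Also a small infelicity: the diagonal rescaling by $d_I(t^\omega)^{-1}$ preserves the K\"ahler \emph{class}, not the form $\omega_\mathbb{P}$; fortunately neither fact is needed for the biholomorphism claim, which is all the proposition asserts. None of these affect correctness, since you are explicit about leaving the GIT and initial-ideal bookkeeping to \cite{KM} and \cite{GL}, exactly as the paper does.
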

Let us give a few remarks about Proposition \ref{propkm}.
Note that, if we set 
$$
GL_n(t)= \{ V \in M_n(\mathbb{C})~|~ V \bullet \iota(t^\omega) \in GL_n \}, 
$$
then we have $Fl_n(t)=GL_n(t)/_t B$ for $t \in \mathbb{C}^\times$, where the right hand side is a geometric quotient of $GL_n(t)$ by the deformed action $\bullet_t$ of the Borel subgroup $B$. 
So we see that $Fl_n(t)$ is biholomorphic to $Fl_n$ for any $t \in \mathbb{C}^\times$.
Moreover, since the action $\bullet g$ on $M_n(\mathbb{C})$ for $g \in T_{GC}$ commutes with the action $\bullet_0 b$ on $M_n(\mathbb{C})$ for $b \in B$, the torus $T_{GC}$ acts on $Fl_n(0)=M_n(\mathbb{C})/\!/_0 B$. 
Thus the family $f \colon (M_n(\mathbb{C}) \times \mathbb{C})/\!/B \to \mathbb{C}$ can be viewed as a toric degeneration of a flag variety. 
The existence of a toric degeneration of a flag variety is originally proved in \cite{GL,C} in terms of representation theory.

\subsection{Degeneration in stages}\label{stage}

To relate the $U(n)$-action on $Fl_n=Fl_n(1)$ with the $T_{GC}$-action on $Fl_n(0)$, Kogan and Miller introduced degeneration in stages as follows. 
For $\tau=(t_2, \dots, t_n) \in (\mathbb{C}^\times)^{n-1}$ we define $\tau^\omega \in M_n(\mathbb{C}^\times)$ by
$$
(\tau^\omega)_{ij}=t_i^{\omega_{ij}},  ~~\text{ where $t_1=1$ and $\omega_{ij}$ is given in (\ref{omega}).}
$$
In the above $(\tau^\omega)_{ij}$ is the $(i,j)$-component of $\tau^\omega \in M_n(\mathbb{C}^\times)$.
Then we define the deformed action $\bullet_\tau$ of $B$ on $M_n(\mathbb{C})$ by 
$$
V \bullet_\tau b = V \bullet \{ \iota(\tau^\omega) (b,\dots,b) (\iota(\tau^\omega))^{-1} \}. 
$$
Thus we have $Fl_n(\tau)=M_n(\mathbb{C})/\!/_\tau B$ for $\tau \in (\mathbb{C}^\times)^{n-1}$ in the same way as in Subsection \ref{tg}.
We note that $Fl_n(\tau)$ is also embedded in $\mathbb{P}$ by the deformed Pl\"ucker relations as $Fl_n(t)$. 
Set $$\tau_k^t =(\underbrace{1, \dots,1}_{n-1-k},t, \underbrace{0, \dots, 0}_{k-1}) \in \mathbb{C}^{n-1}~~\text{for $t \in [0,1]$ and $k=1, \dots, n-1$.}$$ 
It is easy to see that $Fl_n(\tau_k^t)=M_n(\mathbb{C})/\!/_{\tau_k^t} B$ is well-defined. 
Note that $Fl_n(\tau_k^t)$ has singularities if $\tau_k^t=\tau_1^0$ or $k \ge 2$. 
We call the family $\{ Fl_n(\tau_k^t) \}_{t \in [0,1]}$ the $k$-th stage of the degeneration.
Note that 
\begin{align*}
& U(n-k+1) \times T_{GC}^{(n-1)} \times \dots \times T_{GC}^{(n-k+1)} ~\text{acts on $Fl_n(\tau_k^1)$,} \\
& U(n-k) \times T_{GC}^{(n-1)} \times \dots \times T_{GC}^{(n-k+1)} ~\text{acts on $Fl_n(\tau_k^t)$ for $t \in (0,1)$,} \\
& U(n-k) \times T_{GC}^{(n-1)} \times \dots \times T_{GC}^{(n-k)} ~\text{acts on $Fl_n(\tau_k^0)$}. 
\end{align*}
Kogan and Miller considered the following degeneration in stages:
\begin{align*}\label{stage}
Fl_n=Fl_n & (\tau_1^1) \overset{1st}{\longrightarrow} Fl_n(\tau_1^0)=Fl_n(\tau_2^1) \longrightarrow \dots \\ 
& \longrightarrow Fl_n(\tau_k^1) \overset{k-th}{\longrightarrow} Fl_n(\tau_k^0) \longrightarrow \dots \longrightarrow Fl_n(\tau_{n-1}^0)=Fl_n(0).
\end{align*}

In \cite{NNU} Nishinou, Nohara and Ueda clarified the relation between the Gelfand-Cetlin system on the flag variety $Fl_n$ and the completely integrable system on $Fl_n(0)$ coming from its toric structure as follows. 
The smooth part $Fl_n(\tau_k^t)^{reg}$ of $Fl_n(\tau_k^t)$ has a symplectic structure $\iota_{\tau_k^t}^* \omega_\mathbb{P}$, where $\iota_{\tau_k^t} \colon Fl_n(\tau_k^t)^{reg} \to \mathbb{P}$ is the deformed Pl\"ucker embedding. 
Let $\mu_{U(n-k)} \colon Fl_n(\tau_k^t)^{reg} \to \mathfrak{u}(n-k)$ be the moment map for $U(n-k)$-action on $Fl_n(\tau_k^t)^{reg}$ for $t \in [0,1]$, where $\mathfrak{u}(n-k)$ is identified with $\mathfrak{u}(n-k)^*$ by the invariant inner product. 
Define a map $\lambda_{n-k}^j \colon \mathfrak{u}(n-k) \to \mathbb{R}$ such that $\lambda_{n-k}^1(A) \ge \dots \ge \lambda_{n-k}^{n-k}(A)$ are eigenvalues of $-\sqrt{-1}A$ for $A \in \mathfrak{u}(n-k)$ as in Section \ref{main}. 
Then, in \cite{NNU}, the authors proved the following.
\begin{proposition}\label{nnu}
There exist an open dense subset $Fl_n(\tau_k^t)^\circ \subset Fl_n(\tau_k^t)^{reg}$ and a symplectic diffeomorphism $\varphi_k^{t_2, t_1} \colon Fl_n(\tau_k^{t_1})^\circ \to Fl_n(\tau_k^{t_2})^\circ$ for each $k=1, \dots, n-1$, $t \in [0,1]$ and $0 \le t_2 \le t_1 \le 1$ which satisfy the following:
\\
$(1)$ $Fl_n(\tau_1^1)^\circ= \mu_{GC}^{-1}(\mathrm{Int}\Delta_{GC}) \subset Fl_n$ holds. 
\\
$(2)$ $\varphi_k^{t, t}$ is the identity map for any $t \in [0,1]$. 
Moreover, $\varphi_k^{t_3, t_2} \circ \varphi_k^{t_2, t_1}=\varphi_k^{t_3, t_1}$ holds for $0 \le t_3 \le t_2 \le t_1 \le 1$.
\\
$(3)$ Under the identification of $Fl_n(\tau_k^t)^\circ$ for all $t \in [0,1]$ by the map $\varphi_k^{t_2, t_1}$, the moment map for $U(n-k) \times T_{GC}^{(n-1)} \times \dots \times T_{GC}^{(n-k+1)}$-action on $Fl_n(\tau_k^t)^\circ$ is independent of $t \in (0,1]$.
\newline
$(4)$ $( \lambda_{n-k}^j \circ \mu_{U(n-k)} ~|~ 1 \le j \le n-k) \colon Fl_n(\tau_k^0)^\circ \to \mathbb{R}^{n-k}$ coincides with the moment map for the $T_{GC}^{(n-k)}$-action on $Fl_n(\tau_k^0)^\circ$.
\end{proposition}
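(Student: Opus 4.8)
The plan is to follow the strategy of \cite{NNU}, building the symplectic diffeomorphisms $\varphi_k^{t_2,t_1}$ as flows of a vector field that interpolates between the symplectic structures on the different members of the $k$-th stage. The key observation is that within the $k$-th stage the deformation parameter only rescales, via $\iota(\tau_k^{t\,\omega})$, the ``lower-left'' block of the matrix coordinates, and in the limit $t\to 0$ this amounts to degenerating the $U(n-k+1)$-action to $U(n-k)\times T_{GC}^{(n-k)}$. Concretely, I would first fix $k$ and regard $Fl_n(\tau_k^t)^{reg}$ as a subvariety of $\mathbb{P}$ via $\iota_{\tau_k^t}$, with symplectic form $\omega_{\tau_k^t}=\iota_{\tau_k^t}^*\omega_{\mathbb{P}}$. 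The group $G_k = U(n-k)\times T_{GC}^{(n-1)}\times\cdots\times T_{GC}^{(n-k+1)}$ acts on each $Fl_n(\tau_k^t)^{reg}$ in a manner compatible with the deformation, and I would take $Fl_n(\tau_k^t)^\circ$ to be the locus where the combined $G_k$-action together with the residual extra symmetry ($U(n-k+1)$ at $t=1$, $T_{GC}^{(n-k)}$ at $t=0$) is free and the deformed Pl\"ucker coordinates $q_I(V,\tau_k^t)$ relevant to this stage are nonzero; this is open and dense by a dimension count, and (1) will follow by comparing this locus at $k=1$, $t=1$ with the open stratum $\mu_{GC}^{-1}(\mathrm{Int}\,\Delta_{GC})$ on which the Gelfand-Cetlin torus is a genuine torus, exactly as in \cite{GS}.

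Next I would construct the interpolating vector field. The family $\{\omega_{\tau_k^t}\}_{t\in[0,1]}$ is a smooth family of symplectic forms on the fixed manifold underlying $Fl_n(\tau_k^t)^\circ$ (the underlying smooth manifolds are identified since the $q_I(V,\tau_k^t)$ depend polynomially on $t$ and the relevant coordinates are nonvanishing on the $\circ$-locus), all lying in the same cohomology class $[\iota^*\omega_{\mathbb{P}}]$. By a Moser-type argument one writes $\frac{d}{dt}\omega_{\tau_k^t}=d\alpha_t$ for a smooth family of $1$-forms $\alpha_t$, defines $X_t$ by $\iota_{X_t}\omega_{\tau_k^t}=-\alpha_t$, and lets $\varphi_k^{t_2,t_1}$ be the time-$(t_1\!\to\! t_2)$ flow of $X_t$; properties (2) are then immediate from the flow property, provided the flow is complete on the $\circ$-locus, which is where one must be careful to shrink $Fl_n(\tau_k^t)^\circ$ to a $G_k$-invariant, flow-invariant open dense set. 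To get (3), I would choose $\alpha_t$ to be $G_k$-invariant (averaging over the compact directions and using the explicit toric form in the torus directions); then $X_t$ is $G_k$-invariant, the flow intertwines the $G_k$-actions, and since $\frac{d}{dt}$ of the moment map for a fixed invariant generator along the flow vanishes (the standard computation $\frac{d}{dt}(\mu_\xi\circ\varphi) = (\mathcal{L}_{X_t}\mu_\xi) + \dot\mu_\xi = \alpha_t(\xi_{Fl}) + \dot\mu_\xi$, which one arranges to be zero by the choice of $\alpha_t$), the moment map for $G_k$ is $t$-independent on $(0,1]$. Statement (4) is then a direct identification at $t=0$: on $Fl_n(\tau_k^0)^\circ$ the degenerated action makes $U(n-k+1)$ collapse so that the $\lambda_{n-k}^j$ of the $U(n-k)$-moment map become honest action coordinates, and one checks by inspecting $q_I(V,\tau_k^0)$ that these coincide with the moment map components for the subtorus $T_{GC}^{(n-k)}\subset (GL_n)^n$ acting on the toric variety $Fl_n(\tau_k^0)$.

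The main obstacle I anticipate is the control of the $\circ$-locus: ensuring that a single open dense subset can be chosen on which \emph{simultaneously} the flow of $X_t$ is complete for all $t\in[0,1]$ (including the singular endpoint behavior of $Fl_n(\tau_k^0)$), the relevant deformed Pl\"ucker coordinates stay nonzero, and the $G_k$-action (and the extra symmetry at the endpoints) is free. Near $t=0$ the symplectic forms $\omega_{\tau_k^t}$ may degenerate along the locus where the flag variety meets the singular toric boundary, so one has to verify that $\alpha_t$ and $X_t$ extend smoothly and the flow does not escape to this bad set in finite time; I would handle this by working away from the vanishing loci of the $q_I$ that cut out the toric boundary strata, and by using the $G_k$-invariance to reduce the completeness question to the (lower-dimensional, more explicit) quotient. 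The compatibility of the $\varphi_k^{t_2,t_1}$ across \emph{different} stages $k$ is not asserted in the proposition and need not be addressed here.
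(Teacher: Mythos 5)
The paper does not prove this proposition; it is quoted from Nishinou--Nohara--Ueda \cite{NNU}, and the text only remarks that the diffeomorphisms $\varphi_k^{t_2,t_1}$ are constructed via Ruan's gradient-Hamiltonian flow, recalled in Section~\ref{secghf}. Your Moser-theoretic proposal is therefore a genuinely different route. In the gradient-Hamiltonian approach one works on the total space of the degeneration family, with the vector field $Z=-\mathrm{grad}(\Re f)/|\mathrm{grad}(\Re f)|^2$ transverse to the fibers; Proposition~\ref{ghf} says its time-$(t_1\!\to\!t_2)$ flow is automatically a symplectomorphism between the fibers. No primitive $1$-form is chosen, the diffeomorphism of underlying smooth manifolds is an \emph{output} of the construction rather than an input, and, because the group $U(n-k)\times T_{GC}^{(n-1)}\times\cdots\times T_{GC}^{(n-k+1)}$ acts fiberwise by K\"ahler isometries, $Z$ is invariant and $d\mu_\xi(Z)=-\xi_M(\Im f)/|X_{\Im f}|^2=0$ for every generator $\xi$, so the flow is equivariant and preserves moment maps; this delivers $(3)$ essentially for free.

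That contrast points to the genuine gap in your argument. You assert that ``the underlying smooth manifolds are identified since the $q_I(V,\tau_k^t)$ depend polynomially on $t$ \dots'', but the $Fl_n(\tau_k^t)$ for varying $t$ are honestly different subvarieties of $\mathbb{P}$, and for $k\ge 2$ they are GIT quotients of $M_n(\mathbb{C})$ by $B$-actions $\bullet_{\tau_k^t}$ that are not mutually conjugate (some entries of $\tau_k^t$ vanish, so $\iota(\tau_k^{t\,\omega})$ is not invertible). A Moser argument compares symplectic forms on one fixed manifold and cannot start until a preliminary diffeomorphism of the $\circ$-loci across $t$ is supplied---yet producing precisely such a (symplectic) diffeomorphism is the content of the proposition, so as written the argument is circular. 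Two secondary issues you should be aware of: the completeness of the Moser flow on the non-compact $\circ$-locus, which you flag, is handled in the gradient-Hamiltonian picture because $Z$ is \emph{a priori} bounded on compact sets of regular points (this is exactly what Lemma~\ref{phi-approx} later exploits), whereas for Moser you must additionally control the chosen $\alpha_t$; and for $(3)$ the primitive must be normalized so that $\dot\mu^t_\xi-\alpha_t(\xi_M)\equiv 0$ (note a sign slip in your $\mathcal{L}_{X_t}\mu_\xi=\alpha_t(\xi_{Fl})$, which should be $-\alpha_t(\xi_{Fl})$ with your conventions), another $t$-dependent choice that the gradient-Hamiltonian construction avoids.
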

The diffeomorphism $\varphi_k^{t_2, t_1} \colon Fl_n(\tau_k^{t_1})^\circ \to Fl_n(\tau_k^{t_2})^\circ$ is constructed by using the gradient-Hamiltonian flow due to Ruan \cite{R}, which is explained in the next section. 
The moment map for $U(n-k) \times T_{GC}^{(n-1)} \times \dots \times T_{GC}^{(n-k+1)}$-action on $Fl_n(\tau_k^t)^\circ$ induces the completely integrable system on $Fl_n(\tau_k^t)^\circ$ in the same way as in the case of the Gelfand-Cetlin system. 
Proposition \ref{nnu} implies the completely integrable system on $Fl_n(\tau_k^t)^\circ$ for $t \in [0,1]$ and $1 \le k \le n-1$ remains the same during the degeneration in stages.  

Due to Proposition \ref{nnu}, we have a diffeomorphism
\begin{equation}\label{psi-0}
\Psi_0 = \varphi_{n-1}^{0,1} \circ \varphi_{n-2}^{0,1} \circ \dots \circ \varphi_{1}^{0,1} \colon Fl_n^\circ \to Fl_n(0)^\circ.
\end{equation}
where $Fl_n^\circ=Fl_n(\tau_1^1)^\circ$ and $Fl_n(0)^\circ=Fl_n(\tau_{n-1}^0)^\circ$. 
Then Nishinou, Nohara and Ueda proved the following. 
\begin{corollary}\label{nnu2}
Let $\mu_{GC} \colon Fl_n \to \mathbb{R}^{\frac{n(n-1)}{2}}$ be the Gelfand-Cetlin system. 
Let $\mu_{T_{GC}} \colon Fl_n(0) \to (\mathfrak{t}_{GC})^*$ be the moment map for the action of $T_{GC}$ on $Fl_n(0)$. 
Then there is a linear isomorphism $i \colon \mathbb{R}^{\frac{n(n-1)}{2}} \to (\mathfrak{t}_{GC})^*$ such that $i \circ \mu_{GC}= \mu_{T_{GC}} \circ \Psi_0 \colon Fl_n^\circ \to  (\mathfrak{t}_{GC})^*$
In particular, $Fl_n(0)^\circ= \mu_{T_{GC}}^{-1}(\mathrm{Int}\Delta_{GC}) \subset Fl_n(0)$ holds. 
\end{corollary}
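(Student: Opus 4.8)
The plan is to deduce this from Proposition \ref{nnu} purely by bookkeeping: I will follow, stage by stage, what happens to each ``block'' of Gelfand-Cetlin components under the symplectomorphisms $\varphi_k^{0,1}$ making up $\Psi_0$. Recall that $\mu_{GC}$ has the block form $(\lambda_l^j\circ\iota_l^*\circ\mu_{U(n)}\mid 1\le j\le l)$ for $l=1,\dots,n-1$; write the $l$-th block as $(\lambda_l^j\circ\mu_{U(l)}\mid 1\le j\le l)$, where $\mu_{U(l)}=\iota_l^*\circ\mu_{U(n)}$ is the $U(l)$-moment map. Likewise, since $T_{GC}=\{1\}\times T_{GC}^{(1)}\times\cdots\times T_{GC}^{(n-1)}$, the map $\mu_{T_{GC}}$ splits as $(\mu_{T_{GC}^{(1)}},\dots,\mu_{T_{GC}^{(n-1)}})$ with $\mu_{T_{GC}^{(l)}}\colon Fl_n(0)\to(\mathfrak{t}_{GC}^{(l)})^*$ having $l$ components. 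I will show that under $\Psi_0$ the $l$-th Gelfand-Cetlin block corresponds exactly to $\mu_{T_{GC}^{(l)}}$.

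First I would record two preservation facts, both immediate from Proposition \ref{nnu}(3) together with the fact that $\mu_{U(l)}=\iota_l^*\circ\mu_{U(n-k)}$ for the connected subgroup $U(l)\subset U(n-k)$. (a) For $k\le n-l$ the stage-$k$ map $\varphi_k^{0,1}$ pulls back $\mu_{U(n-k)}$, hence pulls back $\mu_{U(l)}$ (as $l\le n-k$), hence pulls back the nonlinear block $(\lambda_l^j\circ\mu_{U(l)}\mid j)$. (b) For $k\ge n-l+1$ the factor $T_{GC}^{(l)}$ appears in the list $U(n-k)\times T_{GC}^{(n-1)}\times\cdots\times T_{GC}^{(n-k+1)}$ (because $n-k+1\le l\le n-1$), so $\varphi_k^{0,1}$ pulls back the $T_{GC}^{(l)}$-moment map.

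Now fix $l$ and run through the stages in the order of $\Psi_0=\varphi_{n-1}^{0,1}\circ\cdots\circ\varphi_1^{0,1}$. By (a), the partial composite $\varphi_{n-l}^{0,1}\circ\cdots\circ\varphi_1^{0,1}\colon Fl_n^\circ\to Fl_n(\tau_{n-l}^0)^\circ$ carries the $l$-th Gelfand-Cetlin block on $Fl_n^\circ$ to the same expression $(\lambda_l^j\circ\mu_{U(l)}\mid j)$ on $Fl_n(\tau_{n-l}^0)^\circ$. By Proposition \ref{nnu}(4) with $k=n-l$, this latter expression coincides with $\mu_{T_{GC}^{(l)}}$ on $Fl_n(\tau_{n-l}^0)^\circ$, up to the identification $(\mathfrak{t}_{GC}^{(l)})^*\cong\mathbb{R}^l$ used there. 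Finally, by (b), the remaining composite $\varphi_{n-1}^{0,1}\circ\cdots\circ\varphi_{n-l+1}^{0,1}$ transports this $T_{GC}^{(l)}$-moment map unchanged to $Fl_n(0)^\circ$. Chaining these gives $\mu_{T_{GC}^{(l)}}\circ\Psi_0=(\lambda_l^j\circ\iota_l^*\circ\mu_{U(n)}\mid 1\le j\le l)$ on $Fl_n^\circ$. Assembling over $l=1,\dots,n-1$ and letting $i\colon\mathbb{R}^{n(n-1)/2}\to(\mathfrak{t}_{GC})^*$ be the block-diagonal linear isomorphism whose $l$-th block is the identification $\mathbb{R}^l\cong(\mathfrak{t}_{GC}^{(l)})^*$ of Proposition \ref{nnu}(4), we obtain $i\circ\mu_{GC}=\mu_{T_{GC}}\circ\Psi_0$. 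For the last assertion, $\Psi_0$ is a diffeomorphism onto $Fl_n(0)^\circ$ and $Fl_n^\circ=\mu_{GC}^{-1}(\mathrm{Int}\Delta_{GC})$ by Proposition \ref{nnu}(1), so $\mu_{GC}(Fl_n^\circ)=\mathrm{Int}\Delta_{GC}$ and hence $\mu_{T_{GC}}(Fl_n(0)^\circ)=i(\mathrm{Int}\Delta_{GC})$; combining this with the fact (implicit in the toric structure of $Fl_n(0)$ and the Nishinou--Nohara--Ueda setup) that $Fl_n(0)^\circ$ is the preimage under $\mu_{T_{GC}}$ of the interior of the moment polytope of $Fl_n(0)$ yields $Fl_n(0)^\circ=\mu_{T_{GC}}^{-1}(\mathrm{Int}\Delta_{GC})$ once we identify via $i$.

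There is no analytic obstacle here: everything substantive is contained in Proposition \ref{nnu}, and the argument is purely combinatorial. The points needing care are that preservation of $\mu_{U(n-k)}$ must be propagated to the genuinely nonlinear functions $\lambda_l^j\circ\mu_{U(l)}$ for every $l\le n-k$ (this is where the chain $U(1)\subset U(2)\subset\cdots$ enters), that the identifications $(\mathfrak{t}_{GC}^{(l)})^*\cong\mathbb{R}^l$ must be kept consistent with the ordering of Gelfand-Cetlin coordinates so that $i$ really is a single global linear map, and that the ``in particular'' clause requires the toric description of $Fl_n(0)^\circ$ as the preimage of the polytope interior.
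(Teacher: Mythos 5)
The paper does not actually prove Corollary~\ref{nnu2}; it cites it to Nishinou--Nohara--Ueda (``Then Nishinou, Nohara and Ueda proved the following'') and states it without argument, so there is no in-paper proof to compare against. That said, your derivation from Proposition~\ref{nnu} is sound and is exactly the kind of stage-by-stage bookkeeping one would expect: for each block index $l$, the composites $\varphi_1^{0,1},\dots,\varphi_{n-l}^{0,1}$ transport $(\lambda_l^j\circ\mu_{U(l)})_j$ unchanged because $U(l)\subset U(n-k)$ and part~(3) preserves $\mu_{U(n-k)}$; at $k=n-l$ part~(4) converts that block into the $T_{GC}^{(l)}$-moment map; and the remaining $\varphi_{n-l+1}^{0,1},\dots,\varphi_{n-1}^{0,1}$ transport the $T_{GC}^{(l)}$-moment map unchanged because $T_{GC}^{(l)}$ is among the preserved torus factors. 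Assembling over $l$ gives $i\circ\mu_{GC}=\mu_{T_{GC}}\circ\Psi_0$, and the ``in particular'' clause follows by pushing $Fl_n^\circ=\mu_{GC}^{-1}(\mathrm{Int}\,\Delta_{GC})$ through $\Psi_0$.

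Two points deserve to be made explicit rather than left implicit. First, Proposition~\ref{nnu}(3) as stated only asserts $t$-independence for $t\in(0,1]$, while your argument uses the endpoint map $\varphi_k^{0,1}$ itself; you should observe that the constancy extends to $t=0$ by continuity of $\varphi_k^{t,1}$ in $t$ and of the moment map (the identification in~(3) is explicitly declared for all $t\in[0,1]$, so this is a one-line remark, but it is needed). Second, for the reverse inclusion in ``$Fl_n(0)^\circ=\mu_{T_{GC}}^{-1}(\mathrm{Int}\,\Delta_{GC})$,'' you establish $\mu_{T_{GC}}(Fl_n(0)^\circ)=\mathrm{Int}\,\Delta_{GC}$ and then appeal to $Fl_n(0)^\circ$ being the full preimage of the interior; strictly speaking what you need is that $Fl_n(0)^\circ$ is $T_{GC}$-invariant and that $\mu_{T_{GC}}$-fibers over interior points are single $T_{GC}$-orbits, from which the equality of sets follows. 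Both of these are true in the NNU framework, so the proof is correct, but as written they are invoked rather than argued.
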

Therefore the authors concluded that $Fl_n(0)$ is a toric variety constructed from the Gelfand-Cetlin polytope $\Delta_{GC}$. 
This fact is originally proved in \cite{KM} in a different way. 
So $Fl_n(0)$ is called a Gelfand-Cetlin toric variety. 
Moreover, the Gelfand-Cetlin polytope $\Delta_{GC}$ can be considered naturally as a subset of $(\mathfrak{t}_{GC})^*$. 
From now on we consider the Gelfand-Cetlin system to be the map $\mu_{GC} \colon Fl_n \to (\mathfrak{t}_{GC})^*$.
\section{Gradient-Hamiltonian flow}\label{secghf}

Let $(M, \omega, J)$ be a K\"ahler manifold. 
Let $f \colon M \to \mathbb{C}$ be a holomorphic function.
Set $B=f(M)$ and $V_c= f^{-1}(c)$ for $c \in B$.
Denote the inclusion map of $V_c$ by $\rho_c \colon V_c \to M$. 
Then we have a family of symplectic manifolds $\{ (V_c, \rho_c^* \omega) \}_{c \in B_{reg}}$ where $B_{reg}$ is the set of regular values of $f$.
To identify these symplectic manifolds, Ruan introduced the gradient-Hamiltonian flow in \cite{R}.
In this section we recall the gradient-Hamiltonian flow and its basic properties. 
We also discuss the lift of the gradient-Hamiltonian flow to the prequantum line bundle.

By simple computations we see that the following.
\begin{lemma}\label{gr-h-reg-pts} 
Let $(M, \omega, J)$ be a K\"ahler manifold. 
Let $\Re f$ and $\Im f$ be the real and imaginary part of the holomorphic function $f \colon M \to \mathbb{C}$ respectively. 
Let $X_{\Im f} \in \mathcal{X}(M)$ be  the Hamiltonian vector field of the function $\Im f$.
Then the following holds: 
$$X_{\Im f}= -\mathrm{grad}(\Re f), ~~\text{that is,}~~ i(-\mathrm{grad}(\Re f))\omega=-d(\Im f).$$
In particular, $X_{\Im f}= -\mathrm{grad}(\Re f)$ is non-zero at a regular point of $f$.
\end{lemma}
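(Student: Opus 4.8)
The plan is to unwind the definitions and reduce everything to the Cauchy-Riemann equations together with the compatibility of $\omega$, $J$, and the Riemannian metric $g(\cdot,\cdot)=\omega(\cdot,J\cdot)$. Write $f = u + \sqrt{-1}\, v$ with $u = \Re f$ and $v = \Im f$. Since $f$ is holomorphic, $df \circ J = \sqrt{-1}\, df$, which in terms of the real and imaginary parts is exactly the Cauchy-Riemann identity $du \circ J = -dv$ (equivalently $dv \circ J = du$). First I would recall that the Hamiltonian vector field $X_v$ is defined by $i(X_v)\omega = -dv$ (using the sign convention implicit in the paper), and that the gradient of $u$ with respect to $g$ is defined by $g(\operatorname{grad} u, \cdot) = du$.

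Next I would compute $i(-\operatorname{grad} u)\,\omega$ and show it equals $-dv$. Using $\omega(Y,Z) = g(Y, JZ)$ we get, for any vector field $Z$,
\begin{align*}
(i(-\operatorname{grad} u)\,\omega)(Z) &= -\omega(\operatorname{grad} u, Z) = -g(\operatorname{grad} u, JZ) = -du(JZ) = -(du \circ J)(Z).
\end{align*}
By the Cauchy-Riemann relation $du \circ J = -dv$, this equals $dv(Z)$; wait — I must track the sign carefully against the paper's stated conclusion $i(-\operatorname{grad}(\Re f))\omega = -d(\Im f)$, so I would fix the convention $du\circ J = dv$ (the other branch of the CR equations, coming from whichever of $J$ or its negative the paper takes as the complex structure) to land on $-dv$. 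In either case the computation is a one-line chain of identities, and then uniqueness of the vector field representing a given $1$-form under the nondegenerate $\omega$ forces $X_v = -\operatorname{grad} u$.

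Finally, for the last sentence I would argue that at a regular point $x$ of $f$, the differential $df_x$ is surjective onto $\mathbb{C} \cong \mathbb{R}^2$, so in particular $du_x \ne 0$; hence $\operatorname{grad}(u)$ is nonzero at $x$ since $g$ is nondegenerate, and therefore so is $X_v = -\operatorname{grad}(u)$.

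There is no real obstacle here — the lemma is, as the paper says, ``simple computations.'' The only thing requiring care is the bookkeeping of signs and conventions: which of the two Cauchy-Riemann equations one uses, whether $X_h$ is defined by $i(X_h)\omega = dh$ or $= -dh$, and whether the metric is $\omega(\cdot, J\cdot)$ or $\omega(J\cdot,\cdot)$. I would simply fix these conventions at the outset so that the chain of equalities produces exactly the displayed formula $i(-\operatorname{grad}(\Re f))\omega = -d(\Im f)$.
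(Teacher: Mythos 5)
Your approach is the right one---unwind the definitions of Hamiltonian vector field and gradient and combine the Cauchy--Riemann equations with the $\omega$--$J$--$g$ compatibility; the paper itself offers no argument beyond ``by simple computations,'' so this is exactly what a reader would need to fill in. However, the sign trouble you ran into is \emph{not} a matter of which branch of the Cauchy--Riemann equations to use, nor of $J$ versus $-J$: it comes from an internal inconsistency in your conventions. You declared $g(\cdot,\cdot)=\omega(\cdot,J\cdot)$ at the outset, but then in the computation used $\omega(Y,Z)=g(Y,JZ)$. These two disagree: from $g(X,Y)=\omega(X,JY)$, substituting $Y=-JZ$ gives $\omega(X,Z)=g(X,-JZ)=-g(X,JZ)$, so the correct relation is $\omega(Y,Z)=-g(Y,JZ)$. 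With that sign and the standard CR identity $du\circ J=-dv$ (which follows from $df\circ J=\sqrt{-1}\,df$), the chain reads
$$
\bigl(i(-\operatorname{grad}u)\,\omega\bigr)(Z)
=-\omega(\operatorname{grad}u,Z)
=g(\operatorname{grad}u,JZ)
=du(JZ)
=-dv(Z),
$$
which is the displayed identity with no need to pass to the ``other branch'' of CR. Your proposed repair---replacing $du\circ J=-dv$ by $du\circ J=dv$---would amount to taking $f$ anti-holomorphic, which is not the hypothesis; the genuine fix is in the $\omega$--$g$ conversion. The final paragraph on regular points is fine, and can be shortened: since $dv\circ J=du$ and $J$ is an isomorphism, $du_x=0$ forces $dv_x=0$ and hence $df_x=0$, so at a regular point $\operatorname{grad}u\ne0$.
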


Suppose that $f$ is proper and that each point in $M$ is a regular point of $f$.
Then we have the following vector field 
$$
Z = -\frac{\mathrm{grad}(\Re f)}{|\mathrm{grad}(\Re f)|^2}= \frac{X_{\Im f}}{|X_{\Im f}|^2} \in \mathcal{X}(M).
$$
It is easy to see that 
$$
Z(\Re f)=-1, ~~~Z(\Im f)=0 ~~\text{on $M$. } 
$$
Since $f \colon M \to B$ is proper, for any $c \in B$ there exists $\epsilon_c >0$ such that the flow $\{\varphi_t \}_t$ generated by the vector field $Z \in \mathcal{X}(M)$ induces a diffeomorphism $\varphi_t  |_{V_c}\colon V_c \to V_{c-t}$ for $t \in (-\epsilon_c, \epsilon_c)$. 
In \cite{R} Ruan found the following remarkable property.
\begin{proposition}\label{ghf}
$(\varphi_t |_{V_c})^* (\rho_{c-t}^*\omega)=\rho_c^* \omega$ for $t \in (-\epsilon_c, \epsilon_c)$. 
\end{proposition}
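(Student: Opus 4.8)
## Proof proposal for Proposition \ref{ghf}

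The plan is to show that the flow $\{\varphi_t\}$ generated by $Z$ preserves the symplectic form in the appropriate restricted sense, by computing the Lie derivative of $\omega$ along $Z$ and using the Cartan magic formula together with the explicit identity $i_Z\omega = $ (something closed on fibers). First I would observe that, by Lemma \ref{gr-h-reg-pts}, $Z = X_{\Im f}/|X_{\Im f}|^2$, so that $i_Z\omega = i(X_{\Im f}/|X_{\Im f}|^2)\omega = -\frac{1}{|X_{\Im f}|^2}\,d(\Im f)$. Hence, by Cartan's formula,
\begin{align*}
\mathcal{L}_Z\omega = d\,i_Z\omega + i_Z\,d\omega = d i_Z\omega = -d\!\left(\frac{1}{|X_{\Im f}|^2}\right)\wedge d(\Im f),
\end{align*}
since $\omega$ is closed. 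So $\mathcal{L}_Z\omega$ is not identically zero in general — it is a multiple of $d(\Im f)$. The key point is therefore not that $\varphi_t^*\omega = \omega$ on the nose (which is false), but that the pullback agrees \emph{after restricting to the fibers} $V_c$.

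The main step is then to compare $\varphi_t^* (\rho_{c-t}^*\omega)$ with $\rho_c^*\omega$ on the fiber $V_c = f^{-1}(c)$. Since $\varphi_t$ maps $V_c$ diffeomorphically onto $V_{c-t}$, it suffices to differentiate in $t$ and show the $t$-derivative of $\varphi_t^*(\rho_{c-t}^*\omega)$ vanishes as a form on $V_c$. This derivative equals $\varphi_t^*\big(\mathcal{L}_Z\omega\big|_{V_{c-t}}\big)$, so I would reduce to showing that $\mathcal{L}_Z\omega$ pulls back to zero under the inclusion $\rho_{c-t}\colon V_{c-t}\to M$. From the computation above, $\rho_{c-t}^*(\mathcal{L}_Z\omega) = -\,\rho_{c-t}^*\!\big(d(1/|X_{\Im f}|^2)\big)\wedge \rho_{c-t}^*\big(d(\Im f)\big)$. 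But $\Im f$ is constant on each fiber $V_{c-t}$ (indeed $V_{c-t}$ is a level set of $f$, hence of both $\Re f$ and $\Im f$), so $\rho_{c-t}^*\,d(\Im f) = d\big(\rho_{c-t}^*(\Im f)\big) = 0$. Therefore $\rho_{c-t}^*(\mathcal{L}_Z\omega) = 0$, which gives $\frac{d}{dt}\big[\varphi_t^*(\rho_{c-t}^*\omega)\big] = 0$ on $V_c$, and since at $t=0$ the expression is $\rho_c^*\omega$, the proposition follows by integrating.

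I expect the main obstacle to be bookkeeping with the pullbacks and the time-dependent target: one must be careful that $\frac{d}{dt}\varphi_t^*(\rho_{c-t}^*\omega)$ genuinely equals $\varphi_t^*(\mathcal{L}_Z\omega)$ restricted correctly — i.e., that the ``$c-t$'' dependence of the target fiber is already accounted for by flowing with $Z$, which holds precisely because $Z(\Re f) = -1$ and $Z(\Im f)=0$, so $\varphi_t$ genuinely carries $V_c$ to $V_{c-t}$ and the flow of $\omega$ along $Z$ is what computes the $t$-derivative. A clean way to organize this is to work on the product $V_c\times(-\epsilon_c,\epsilon_c)$ with the map $(x,t)\mapsto \varphi_t(x)$ and pull $\omega$ back there, then show the result has no $dt$-component contributing and is $t$-independent in the fiber directions; but the Lie-derivative argument above is the most economical. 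No deep input is needed beyond Lemma \ref{gr-h-reg-pts}, $d\omega=0$, and the fact that $f$ (hence $\Im f$) is locally constant on fibers.
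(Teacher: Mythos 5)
The paper does not actually prove Proposition~\ref{ghf}; it is attributed to Ruan~\cite{R} and stated without proof, so there is nothing direct to compare against. Your argument is correct as a stand-alone proof. Writing $\alpha_t=(\varphi_t|_{V_c})^*(\rho_{c-t}^*\omega)=\rho_c^*(\varphi_t^*\omega)$ (since $\rho_{c-t}\circ\varphi_t|_{V_c}=\varphi_t\circ\rho_c$) and differentiating gives $\frac{d}{dt}\alpha_t=(\varphi_t|_{V_c})^*\bigl(\rho_{c-t}^*\mathcal{L}_Z\omega\bigr)$; your computation $\mathcal{L}_Z\omega=-d\bigl(\tfrac{1}{|X_{\Im f}|^2}\bigr)\wedge d(\Im f)$ plus $\rho_{c-t}^*\,d(\Im f)=0$ then yields $\frac{d}{dt}\alpha_t=0$, and $\alpha_0=\rho_c^*\omega$ finishes it. It is worth noting that your mechanism is the same one the authors \emph{do} spell out for the related Proposition~\ref{liftghf}: there they observe $i(Z)\omega=-d(\Im f)/|X_{\Im f}|^2$ vanishes when pulled back to $\{\Im f=\Im c\}$, and conclude via Cartan's formula that the flow preserves the relevant structure on that locus. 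That is logically equivalent to your observation that $\rho_{c-t}^*(\mathcal{L}_Z\omega)=\rho_{c-t}^*\,d(i_Z\omega)=d(\rho_{c-t}^*\,i_Z\omega)=0$; you simply expand $d(i_Z\omega)$ explicitly with the product rule rather than commuting $d$ past the pullback, which is a matter of presentation. One small stylistic caution: when you write ``$\varphi_t^*(\mathcal{L}_Z\omega|_{V_{c-t}})$'' you should mean the pullback $(\varphi_t|_{V_c})^*\rho_{c-t}^*(\mathcal{L}_Z\omega)$ rather than a naive restriction of the $2$-form to the subset, since the step genuinely needs the $d$–pullback commutation on the submanifold; you do flag this subtlety at the end, and the argument as organized is sound.
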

We call $Z \in \mathcal{X}(M)$ the gradient-Hamiltonian vector field, and $\{\varphi_t \}_t$ the gradient-Hamiltonian flow respectively.

Next we discuss the lift of the gradient-Hamiltonian flow to the prequantum line bundle. 
Let us assume that there exists a prequantum line bundle $(L, h, \nabla)$ on $M$ in addition to the above setting. 
For any $c \in B$ we denote the restriction of $(L, h, \nabla)$ to the fiber $V_c$ by $(L^{V_c}, h^{V_c}, \nabla^{V_c})$.

The horizontal lift $\tilde{Z} \in \mathcal{X}(L)$ of $Z \in \mathcal{X}(M)$ induces the flow  $\{ \tilde{\varphi}_t \}_t$, which is a lift of the gradient-Hamiltonian flow $\{\varphi_t \}_t$. 
Similarly, for any $c \in B$ there exists $\epsilon_c >0$ such that the flow $\{\tilde{\varphi}_t \}_t$ induces a bundle isomorphism $\tilde{\varphi}_t  |_{L^{V_c}}\colon L^{V_c} \to L^{V_{c-t}}$ for $t \in (-\epsilon_c, \epsilon_c)$. 

Then we have the following proposition. 
Since its proof does not seem to be found in the literature, we give a proof here.
\begin{proposition}\label{liftghf}
$(\tilde{\varphi}_t |_{L^{V_c}})^* \nabla^{V_{c-t}}=\nabla^{V_c}$ and $(\tilde{\varphi}_t |_{L^{V_c}})^* h^{V_{c-t}}=h^{V_c}$ for $t \in (-\epsilon_c, \epsilon_c)$. 
\end{proposition}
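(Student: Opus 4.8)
The plan is to reduce both assertions to the defining property of the horizontal lift. The lift $\tilde\varphi_t$ is, by construction, the flow of the horizontal lift $\tilde Z$ of $Z$; horizontality means exactly that $\tilde Z$ is the $\nabla$-horizontal lift, so parallel transport along the flow lines of $Z$ coincides with the flow $\tilde\varphi_t$. Two standard facts about horizontal lifts then do the work: (a) the horizontal lift preserves the connection in the sense that pulling back $\nabla$ along parallel transport gives back $\nabla$ restricted to directions transverse to the flow, and (b) since $\nabla$ is a Hermitian connection, parallel transport is an isometry for $h$, so $h$ is preserved as well. The only subtlety is that the statement is about the \emph{restricted} data $\nabla^{V_{c-t}}$ and $h^{V_{c-t}}$ on the fibers, not about $\nabla$ and $h$ on all of $M$, so I must check that the restriction is compatible with the flow — but this is automatic because $Z(\Im f)=0$ means $Z$ is tangent to the level sets of $\Im f$ and $Z(\Re f) = -1$ carries $V_c$ diffeomorphically to $V_{c-t}$, so $\tilde\varphi_t$ genuinely maps $L^{V_c}$ to $L^{V_{c-t}}$ and the restriction of a pulled-back object is the pull-back of the restriction.

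Concretely, I would argue as follows. For the metric: let $\xi$ be a section of $L$ with $\tilde Z$-invariant norm — more precisely, for $p \in V_c$ and a horizontal curve $\gamma$ in $L$ over the flow line $t \mapsto \varphi_t(p)$, we have $\frac{d}{dt} h(\gamma(t),\gamma(t)) = 2\,\mathrm{Re}\, h(\nabla_{\tilde Z}\gamma, \gamma) = 0$ since $\gamma$ is horizontal and $\nabla$ is Hermitian. Hence $h$ is constant along horizontal lifts of flow lines, which says precisely $(\tilde\varphi_t|_{L^{V_c}})^* h^{V_{c-t}} = h^{V_c}$. For the connection: I would use that for any vector field $W$ on $V_c$ and any section $s$ of $L^{V_{c-t}}$,
$$
\bigl((\tilde\varphi_t|_{L^{V_c}})^*\nabla^{V_{c-t}}\bigr)_W s = \nabla_{(\varphi_t)_* W}(\tilde\varphi_t \circ s)\ \text{pulled back},
$$
and the identity $[\tilde Z, \widetilde{W}^{\mathrm{hor}}]$ being horizontal whenever the curvature evaluated on $(Z, W)$ — actually the relevant point is Lemma \ref{gr-h-reg-pts}: since $Z = X_{\Im f}/|X_{\Im f}|^2$ is (up to reparametrization) a Hamiltonian vector field, it is symplectic, and the curvature $R^\nabla = -2\pi\sqrt{-1}\,\omega$ contracted with $Z$ equals $-2\pi\sqrt{-1}\, i(Z)\omega = 2\pi\sqrt{-1}\, d(\Im f)/|X_{\Im f}|^2$; restricted to $V_c \subset \{\Im f = \text{const}\}$ this vanishes in the fiber directions, which is exactly the obstruction to $\tilde\varphi_t$ intertwining the restricted connections.

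\textbf{Main obstacle.} The delicate step is the connection statement, and specifically making precise that the curvature term which \emph{a priori} obstructs $\tilde\varphi_t^*\nabla^{V_{c-t}} = \nabla^{V_c}$ actually vanishes. The point is that for a general vector field $Z$ the horizontal flow $\tilde\varphi_t$ does \emph{not} intertwine connections — one picks up a term involving $i(Z)R^\nabla = -2\pi\sqrt{-1}\, i(Z)\omega$. Here $i(Z)\omega = -d(\Im f)/|X_{\Im f}|^2$ by Lemma \ref{gr-h-reg-pts}, which is a multiple of $d(\Im f)$; when we restrict to a fiber $V_c$, on which $\Im f$ is constant, the pullback $\rho_c^*\,d(\Im f) = d(\rho_c^*\Im f) = 0$. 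So the obstruction term dies upon restriction, and this is the crux of why the proposition holds — it is not a formal consequence of horizontality alone but genuinely uses the geometry of the fibration $f$. I would therefore organize the proof around first writing the exact formula for $\frac{d}{dt}\bigl((\tilde\varphi_t|_{L^{V_c}})^*\nabla^{V_{c-t}} - \nabla^{V_c}\bigr)$ in terms of $i(Z)R^\nabla$ (a Cartan-type calculation), then invoking Lemma \ref{gr-h-reg-pts} and the constancy of $\Im f$ on fibers to kill it; the metric statement is then the easy companion computation sketched above.
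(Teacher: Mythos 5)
Your proposal is correct and follows essentially the same route as the paper: the metric statement is immediate from $\nabla$ being Hermitian, and the crux of the connection statement is that the obstruction is $i(Z)\omega$, which by Lemma~\ref{gr-h-reg-pts} equals $-d(\Im f)/|X_{\Im f}|^2$ and therefore pulls back to zero on the level sets of $\Im f$. The paper carries out your ``Cartan-type calculation'' concretely by passing to the connection form $\alpha$ on the unit sphere bundle and computing $L_{\tilde Z}\alpha = i(\tilde Z)\,d\alpha = p^*(i(Z)\omega)$ — this is exactly the formula you gesture at; note only that your opening framing lists ``horizontal lift preserves the connection'' as a standard fact, which as you yourself then point out is false in general and is precisely what the $i(Z)R^{\nabla}$ computation has to establish here.
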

\begin{proof}
Since the connection $\nabla$ preserves the Hermitian metric $h$, the second assertion is obvious.  
So we prove the first assertion.

Since $Z(\Im f)=0$ on $M$, the gradient-Hamiltonian flow $\{\varphi_t \}_t$ preserves $M_{\Im f = \Im c} = \{ p \in M ~|~ \Im f(p)=\Im c \}$. 
First we show that $i(Z)\omega=0$ on $M_{\Im f = \Im c}$. 
In fact, we have
$$
i(Z)\omega=i(\frac{X_{\Im f}}{|X_{\Im f}|^2})\omega= \frac{-d(\Im f)}{|X_{\Im f}|^2}=0 ~~\text{on $M_{\Im f = \Im c}$.}
$$

Let $S \subset L$ be the unit sphere bundle and $p \colon S \to M$ the projection.
If we denote the connection form of $\nabla$ by $\alpha \in \Omega^1(S)$, then we have $d \alpha =p^* \omega$.
Since the restriction of the horizontal lift $\tilde{Z} \in \mathcal{X}(L)$ to $S$ can be considered as $\tilde{Z} \in \mathcal{X}(S)$, we have $i(\tilde{Z}) \alpha =0$ and $p_* \tilde{Z} = Z$.
So,  on $p^{-1}(M_{\Im f = \Im c})$, we have 
$$
L_{\tilde{Z}} \alpha 
= i(\tilde{Z})(p^* \omega) =  p^* \{ i(p_* \tilde{Z}) \omega \} = p^* \{ i(Z) \omega \}=0 .
$$
Thus the flow induced by the vector field $\tilde{Z} \in \mathcal{X}(S)$ preserves the connection $\nabla$ on $p^{-1}(M_{\Im f = \Im c})$.
\end{proof}
\section{Toric K\"ahler structures of toric manifolds}\label{toric}
In this section we review toric K\"ahler structures of toric manifolds.  
Starting from a Delzant polytope, we construct a symplectic toric manifold in Subsection \ref{symp-toric-mfld} and a complex toric manifold in Subsection \ref{comp-toric-mfld}. 
We identify them according to a choice of symplectic potentials due to \cite{Ab1,Ab2, Gu1,Gu2} in Subsection \ref{symp-potl}. 
We also review certain deformation of toric K\"ahler structures by changing symplectic potentials, which was introduced in \cite{BFMN}. 

Let $T^n$ be a real torus with the Lie algebra $\mathfrak{t}^n$. 
Let 
\begin{equation}\label{polytope}
\Delta= \{ p \in (\mathfrak{t}^n)^* ~|~ \langle p, r_j \rangle + \lambda_j \ge 0 ~~\text{for $j=1, \dots, d$} \}
\end{equation}
be a bounded Delzant polytope, where $\langle ~, ~ \rangle \colon (\mathfrak{t}^n)^* \times \mathfrak{t}^n \to \mathbb{R}$ is the natural pairing and $r_j$ is a primitive vector in the lattice $\mathfrak{t}^n_{\mathbb{Z}}$ for $j=1, \dots, d$.
We assume $\lambda_1, \dots, \lambda_d \in \mathbb{Z}$. 
We set 
\begin{equation}\label{wall}
l_j(p)= \langle p, r_j \rangle + \lambda_j, ~~~~~~ F_j = \{ p \in (\mathfrak{t}^n)^* ~|~ l_j(p) = 0 \} ~~~~~~ \text{for $j=1, \dots, d$.}
\end{equation}

Let $T^d$ be a real torus with the Lie algebra $\mathfrak{t}^d$ and $X_1, \dots, X_d \in \mathfrak{t}^d_{\mathbb{Z}}$ be the standard basis of $\mathfrak{t}^d$. 
Let $\pi \colon \mathfrak{t}^d \to \mathfrak{t}^n$ be the surjective Lie algebra homomorphism defined by $\pi(X_j)=r_j$ for $j=1, \dots, d$.  
Then the kernel of the corresponding Lie group homomorphism $\tilde{\pi} \colon T^d \to T^n$ is a connected subtorus $K$ of $T^d$ with the Lie algebra $\mathfrak{k}$. 
Let $u_1, \dots, u_d \in (\mathfrak{t}^d)^*$ be the dual basis of $X_1, \dots, X_d \in \mathfrak{t}^d_{\mathbb{Z}}$.  
We set $\lambda_\Delta= \lambda_1 u_1 + \dots + \lambda_d u_d \in (\mathfrak{t}^d)^*_{\mathbb{Z}}$. 

\subsection{A symplectic toric manifold $M_{symp}$}\label{symp-toric-mfld}
Let $\tilde{\omega}$ be the standard symplectic form on $\mathbb{C}^d$. 
The natural action of $T^d$ on $(\mathbb{C}^d, \tilde{\omega})$ admits a moment map $\mu_{T^d} \colon \mathbb{C}^d \to (\mathfrak{t}^d)^*$, given by $\mu_{T^d}(z)=\pi\sum_{j=1}^d|z_j|^2u_j$, where $z=(z_1, \dots, z_d)$. 
If we denote the dual map of the inclusion $\iota \colon \mathfrak{k} \to \mathfrak{t}^d$ by $\iota^* \colon (\mathfrak{t}^d)^* \to \mathfrak{k}^*$, then the moment map $\mu_K \colon \mathbb{C}^d \to \mathfrak{k}^*$ for the action of the subtorus $K$ on $(\mathbb{C}^d, \tilde{\omega})$ is given by $\mu_K(z)=\pi\sum_{j=1}^d|z_j|^2 \iota^* u_j$.
The compact symplectic toric manifold $M_{symp}$ is defined to be the symplectic quotient $M_{symp}=\mu_K^{-1}(\iota^* \lambda_\Delta)/K$ with the natural symplectic structure $\omega \in \Omega^2(M_{symp})$. 
The quotient torus $T^n=T^d/K$ acts on $(M_{symp}, \omega)$ with the moment map $\mu_{T^n} \colon M_{symp} \to (\mathfrak{t}^n)^*$.
Since $\mu_{T^d}(z)-\lambda_\Delta \in \ker\{\iota^* \colon (\mathfrak{t}^d)^* \to \mathfrak{k}^* \} = \mathrm{image} \{\pi^* \colon (\mathfrak{t}^n)^* \to (\mathfrak{t}^d)^* \}$, it is given by $\mu_{T^n}([z]) = (\pi^*)^{-1}(\mu_{T^d}(z)-\lambda_\Delta) \in (\mathfrak{t}^n)^*$. 
It is well known that $\mu_{T^n}(M_{symp})= \Delta$.

Next we define a prequantum line bundle on $M_{symp}$. 
Let $\tilde{L}_{symp}=\mathbb{C}^d \times \mathbb{C}$ be the trivial line bundle with the standard fiber metric $\tilde{h}$. 
Let $\tilde{\nabla}$ be a Hermitian connection on $\tilde{L}_{symp}$ defined by $\tilde{\nabla}=d - \sqrt{-1}\pi \sum_{i=j}^d (x_j dy_j -y_j dx_j)$, where $x_j, y_j$ are the real and imaginary part of $z_j$ respectively. 
The action of $T^d$ on $\tilde{L}_{symp}$ defined by $(z,v)\mathrm{Exp}_{T^d} \xi = (z \mathrm{Exp}_{T^d}\xi, ve^{2 \pi \sqrt{-1} \langle \lambda_{\Delta}, \xi \rangle})$ preserves the Hermitian metric $\tilde{h}$ and the connection $\tilde{\nabla}$, where $\mathrm{Exp}_{T^d} \colon \mathfrak{t}^d \to T^d$ is the exponential map. 
Then the prequantum line bundle $(L_{symp}, h, \nabla)$ on $M_{symp}$ is defined to be the quotient of the restriction of $\tilde{L}_{symp}$ to $\mu_K^{-1}(\iota^* \lambda_\Delta)$ by the action of the subtorus $K$. 
Moreover, the quotient torus $T^n=T^d/K$ acts on $L_{symp}$, preserving $h$ and $\nabla$. 
Let $[z]_K \in M_{symp}$ denote a point represented by $z \in \mu_K^{-1}(\iota^* \lambda_\Delta)$. 
Similarly $[z,v]_K$ denotes a point in $L_{symp}$ represented by $(z,v) \in \mu_K^{-1}(\iota^* \lambda_\Delta) \times \mathbb{C}$.

Set $M_{symp}^0 = \mu_{T^n}^{-1}(\mathrm{Int}\Delta)$, where $\mathrm{Int}\Delta$ is the interior of the Delzant polytope $\Delta$. 
Then it is easy to see that 
$(\sqrt{\frac{l_1(p)}{\pi}}, \dots, \sqrt{\frac{l_d(p)}{\pi}}) \in \mu_K^{-1}(\iota^* \lambda_\Delta)$ for any $p \in \mathrm{Int}\Delta$.
Therefore the map $\psi^0_{symp} \colon \mathrm{Int}\Delta \times \mathfrak{t}^n/\mathfrak{t}^n_\mathbb{Z} \to M_{symp}^0$ defined by \begin{align}\label{symp-coord}
\psi^0_{symp} (p, [q]) &=[(\sqrt{\frac{l_1(p)}{\pi}}, \dots, \sqrt{\frac{l_d(p)}{\pi}})]_K \mathrm{Exp}_{T^n}(q) \\
&= [(\sqrt{\frac{l_1(p)}{\pi}} e^{2 \pi \sqrt{-1} \langle u_1, \tilde{q} \rangle}, \dots, \sqrt{\frac{l_d(p)}{\pi}}e^{2 \pi \sqrt{-1} \langle u_d, \tilde{q} \rangle})]_K \nonumber
\end{align}
is a diffeomorphism, 
where $\tilde{q} \in \mathfrak{t}^d$ is taken so that  $\pi(\tilde{q})=q$. 
Note that we have 
\begin{equation}\label{mm}
\mu_{T^n} \circ \psi^0_{symp}(p, [q])= p \hspace*{5mm}\text{for $(p, [q]) \in \mathrm{Int}\Delta \times \mathfrak{t}^n/\mathfrak{t}^n_\mathbb{Z}$.}
\end{equation}

Next we define a section $s^0_{symp}$ of $L_{symp}$ restricted to $M_{symp}^0$ by 
$$
s_{symp}^0(p,[q])=[(\sqrt{\frac{l_1(p)}{\pi}}, \dots, \sqrt{\frac{l_d(p)}{\pi}}),1]_K \mathrm{Exp}_{T^n}(q) \in L_{symp}.
$$ 
This section induces a unitary trivialization of the prequantum line bundle $L_{symp}$ on $M_{symp}^0$. 

Fix a $\mathbb{Z}$-basis $p_1, \dots, p_n \in (\mathfrak{t}^n)^*_\mathbb{Z}$ and its dual basis $q_1, \dots, q_n \in \mathfrak{t}^n_\mathbb{Z}$. 
Set $\Delta^0= \{ x=(x_1, \dots, x_n) \in \mathbb{R}^n ~|~ \sum_{i=1}^n x_i p_i \in \mathrm{Int}\Delta \}$. 
Then we have a coordinate $(x,[\theta])\in \Delta^0 \times \mathbb{R}^n/\mathbb{Z}^n$ on $\mathrm{Int}\Delta \times \mathfrak{t}^n/\mathfrak{t}^n_\mathbb{Z}$. 
So $(x,[\theta])\in \Delta^0 \times \mathbb{R}^n/\mathbb{Z}^n$ can be considered as a coordinate on $M_{symp}^0$.
It is easy to see the following by simple computations.
\begin{lemma}\label{cansymp}
Let $(x, [\theta]) \in \Delta^0 \times \mathbb{R}^n/\mathbb{Z}^n$ be the coordinate on $M_{symp}^0$ induced by the fixed basis $p_1, \dots, p_n \in (\mathfrak{t}^n)^*_\mathbb{Z}$.
Then the symplectic form $\omega$ on $M_{symp}^0$ and the connection $\nabla$ on $L_{symp}|_{M_{symp}^0}$ are described as follows. 
\\
$(1)$ $\omega|_{M_{symp}^0}= \sum_{i=1}^n dx_i \wedge d \theta_i$.
\\
$(2)$ $\nabla|_{M_{symp}^0}=d- 2 \pi \sqrt{-1} \sum_{i=1}^n x_i d \theta_i$ with respect to the unitary trivialization defined by the section $s_{symp}^0$ on $M_{symp}^0$.
\\
$(3)$ For $m \in \mathrm{Int}\Delta$, $\mu_{T^n}^{-1}(m)$ is a Bohr-Sommerfeld fiber for the prequantum line bundle $(L_{symp}, h, \nabla)$ if and only if $m \in \mathrm{Int}\Delta \cap (\mathfrak{t}^n)^*_{\mathbb{Z}}$. 
Moreover, $\delta_m([\theta])=e^{2 \pi \sqrt{-1} \sum_{i=1}^n m_i \theta_i} s_{symp}^0 |_{\mu_{T^n}^{-1}(m)}$ is a covariantly constant section of $(L_{symp}, h, \nabla)|_{\mu_{T^n}^{-1}(m)}$ for $m= \sum_{i=1}^n m_i p_i \in \mathrm{Int}\Delta \cap (\mathfrak{t}^n)^*_{\mathbb{Z}}$, where $[ \theta ] \in \mathbb{R}^n/\mathbb{Z}^n$ is a coordinate on $\mu_{T^n}^{-1}(m)$.
\end{lemma}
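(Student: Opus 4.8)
The plan is to push everything onto the model $\mathrm{Int}\Delta \times \mathfrak{t}^n/\mathfrak{t}^n_\mathbb{Z}$ via the diffeomorphism $\psi^0_{symp}$ of $(\ref{symp-coord})$ and compute there. After fixing a linear splitting of $\pi\colon \mathfrak{t}^d \to \mathfrak{t}^n$, so that $\tilde{q}$ becomes a genuine function of $q$, let $\Phi\colon \mathrm{Int}\Delta \times \mathfrak{t}^n/\mathfrak{t}^n_\mathbb{Z} \to \mu_K^{-1}(\iota^*\lambda_\Delta)\subset \mathbb{C}^d$ be the map $\Phi(p,[q])=(\sqrt{l_1(p)/\pi}\,e^{2\pi\sqrt{-1}\langle u_1,\tilde{q}\rangle},\dots,\sqrt{l_d(p)/\pi}\,e^{2\pi\sqrt{-1}\langle u_d,\tilde{q}\rangle})$, so that $\psi^0_{symp}=\pi_K\circ\Phi$, where $\pi_K\colon \mu_K^{-1}(\iota^*\lambda_\Delta)\to M_{symp}$ is the quotient map. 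Since $\pi_K^*\omega$ is the restriction of the standard form $\tilde{\omega}=\sum_j dx_j\wedge dy_j$, we may compute $(\psi^0_{symp})^*\omega=\Phi^*\tilde{\omega}$; changing the splitting alters $\Phi$ only by the $K$-action, so nothing below depends on it. The only point that needs attention is the linear algebra relating the bases $X_j,u_j$ on $\mathfrak{t}^d$ to $p_i,q_i$ on $\mathfrak{t}^n$; the rest is a direct computation, as the statement ``by simple computations'' anticipates. The reusable fact is the identity $\sum_j\langle p_i,r_j\rangle\,d\langle u_j,\tilde{q}\rangle=d\theta_i$: indeed $\sum_j\langle u_j,\tilde{q}\rangle X_j=\tilde{q}$, applying $\pi$ gives $\sum_j\langle u_j,\tilde{q}\rangle r_j=q$, pairing with $p_i$ gives $\sum_j\langle p_i,r_j\rangle\langle u_j,\tilde{q}\rangle=\theta_i$, and the coefficients $\langle p_i,r_j\rangle$ are constants.

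For $(1)$, in polar coordinates $z_j=r_je^{\sqrt{-1}\phi_j}$ one has $dx_j\wedge dy_j=\tfrac12 d(r_j^2)\wedge d\phi_j$, and on the image of $\Phi$ we have $r_j^2=l_j(p)/\pi$ and $\phi_j=2\pi\langle u_j,\tilde{q}\rangle$, so $\Phi^*\tilde{\omega}=\sum_j dl_j(p)\wedge d\langle u_j,\tilde{q}\rangle$. Writing $p=\sum_i x_ip_i$ gives $l_j(p)=\sum_i x_i\langle p_i,r_j\rangle+\lambda_j$, hence $dl_j(p)=\sum_i\langle p_i,r_j\rangle\,dx_i$, and the identity above yields $\Phi^*\tilde{\omega}=\sum_i dx_i\wedge\bigl(\sum_j\langle p_i,r_j\rangle\,d\langle u_j,\tilde{q}\rangle\bigr)=\sum_i dx_i\wedge d\theta_i$.

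For $(2)$, unwinding the $T^n$-action on $L_{symp}$ shows that, in the trivialization $\tilde{L}_{symp}=\mathbb{C}^d\times\mathbb{C}$, the section $s^0_{symp}$ corresponds along $\Phi$ to $\sigma=e^{2\pi\sqrt{-1}\langle\lambda_\Delta,\tilde{q}\rangle}\mathbf{1}$, where $\mathbf{1}$ is the constant section $v\equiv1$. In polar coordinates $x_jdy_j-y_jdx_j=r_j^2\,d\phi_j$, so on the image of $\Phi$ we get $\tilde{\nabla}\mathbf{1}=-\sqrt{-1}\pi\sum_j r_j^2\,d\phi_j\,\mathbf{1}=-2\pi\sqrt{-1}\sum_j l_j(p)\,d\langle u_j,\tilde{q}\rangle\,\mathbf{1}$. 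Since $d\langle\lambda_\Delta,\tilde{q}\rangle=\sum_j\lambda_j\,d\langle u_j,\tilde{q}\rangle$, the Leibniz rule gives $\tilde{\nabla}\sigma=-2\pi\sqrt{-1}\sum_j(l_j(p)-\lambda_j)\,d\langle u_j,\tilde{q}\rangle\,\sigma=-2\pi\sqrt{-1}\sum_j\langle p,r_j\rangle\,d\langle u_j,\tilde{q}\rangle\,\sigma$, and by the same identity this equals $-2\pi\sqrt{-1}\sum_i x_i\,d\theta_i\,\sigma$. Thus $\nabla=d-2\pi\sqrt{-1}\sum_i x_i\,d\theta_i$ in the trivialization given by $s^0_{symp}$; as a check, its curvature is $-2\pi\sqrt{-1}\sum_i dx_i\wedge d\theta_i=-2\pi\sqrt{-1}\,\omega$, as required of a prequantum connection.

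For $(3)$, using $(\ref{mm})$ the fiber is $\mu_{T^n}^{-1}(m)=\{x=(m_1,\dots,m_n)\}\times\mathbb{R}^n/\mathbb{Z}^n$; writing a section as $g([\theta])\,s^0_{symp}$, part $(2)$ shows $\nabla(g\,s^0_{symp})$ restricted to the fiber equals $\bigl(dg-2\pi\sqrt{-1}\,g\sum_i m_i\,d\theta_i\bigr)s^0_{symp}$, so covariant constancy is the equation $dg=2\pi\sqrt{-1}\,g\sum_i m_i\,d\theta_i$, whose solutions are the constant multiples of $g([\theta])=e^{2\pi\sqrt{-1}\sum_i m_i\theta_i}$. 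This descends to $\mathbb{R}^n/\mathbb{Z}^n$ exactly when every $m_i\in\mathbb{Z}$, i.e. when $m=\sum_i m_ip_i\in(\mathfrak{t}^n)^*_\mathbb{Z}$; hence $\mu_{T^n}^{-1}(m)$ is Bohr--Sommerfeld iff $m\in\mathrm{Int}\Delta\cap(\mathfrak{t}^n)^*_\mathbb{Z}$, in which case $\delta_m([\theta])=e^{2\pi\sqrt{-1}\sum_i m_i\theta_i}\,s^0_{symp}|_{\mu_{T^n}^{-1}(m)}$ spans the space of covariantly constant sections. The main (and really only) obstacle throughout is bookkeeping the dual bases across $\pi\colon\mathfrak{t}^d\to\mathfrak{t}^n$; once the identity $\sum_j\langle p_i,r_j\rangle\,d\langle u_j,\tilde{q}\rangle=d\theta_i$ is established, $(1)$--$(3)$ drop out immediately.
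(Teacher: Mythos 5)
The paper gives no proof of Lemma \ref{cansymp}; it simply states ``It is easy to see the following by simple computations,'' and your write-up supplies exactly that computation. Your argument is correct and is the intended direct verification: you pull everything back along $\psi^0_{symp}$, use the reduction identity $\pi_K^*\omega = \tilde\omega|_{\mu_K^{-1}(\iota^*\lambda_\Delta)}$, and the key bookkeeping identity $\sum_j\langle p_i,r_j\rangle\,d\langle u_j,\tilde q\rangle=d\theta_i$ does all the work for parts $(1)$--$(3)$.

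One small remark you could make explicit (it is implicit in your parenthetical on the splitting): the forms $\sum_j l_j(p)\,d\langle u_j,\tilde q\rangle$ and $\sum_j\lambda_j\,d\langle u_j,\tilde q\rangle$ individually depend on the lift $\tilde q$, but their difference $\sum_j\langle p,r_j\rangle\,d\langle u_j,\tilde q\rangle$ does not, since for $k\in\mathfrak k$ one has $\sum_j\langle p,r_j\rangle\langle u_j,k\rangle=\langle p,\pi(k)\rangle=0$; this is the precise reason the connection form descends through the Leibniz cancellation of the $\lambda_j$ terms. This is consistent with, and slightly sharpens, your ``changing the splitting alters $\Phi$ only by the $K$-action'' remark.
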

\subsection{A complex toric manifold $M_{comp}$}\label{comp-toric-mfld}
Let $\Delta$ be a Delzant polytope defined by (\ref{polytope}), and denote its set of vertices by $\Delta(0)$. 
Let $F_j \subset (\mathfrak{t}^n)^*$ be the hyperplane defined in (\ref{wall}) for $j=1, \dots, d$.  
For each $v \in \Delta(0)$ we set $\Lambda_v= \{ j ~|~ v \in F_j \}$, $\mathbb{C}^d_v = \{ z \in \mathbb{C}^d ~|~ z_j \ne 0 ~~\text{if $j \in \{1, \dots, d \} \setminus \Lambda_v$} \}$ and $\mathbb{C}^d_\Delta =\bigcup_{v \in \Delta(0)}\mathbb{C}^d_v$.
Then the compact complex toric manifold $M_{comp}$ is defined to be the quotient space $M_{comp}=\mathbb{C}^d_\Delta/K_\mathbb{C}$, where $K_\mathbb{C}$ is the complexification of the subtorus $K$. 
Similarly the complexification of the torus $T^d$ is denoted by $T^d_\mathbb{C}$.
The quotient torus $T^n_\mathbb{C}=T^d_\mathbb{C}/K_\mathbb{C}$ acts on $M_{comp}$, preserving its complex structure $J$.

Next we define a holomorphic line bundle on $M_{comp}$.
Let $\tilde{L}_{comp}= \mathbb{C}^d \times \mathbb{C}$ be a trivial holomorphic line bundle on $\mathbb{C}^d$. 
Define the action of $T^d_\mathbb{C}$ on $\tilde{L}_{comp}$ by $(z,v)\mathrm{Exp}_{T^d_{\mathbb{C}}}\xi = (z \mathrm{Exp}_{T^d_{\mathbb{C}}} \xi, v e^{2 \pi \sqrt{-1} \langle \lambda_\Delta, \xi \rangle} )$. 
The holomorphic line bundle $L_{comp}$ is defined to be the quotient of the restriction of $\tilde{L}_{comp}$ to $\mathbb{C}^d_\Delta $ by the action of $K_\mathbb{C}$. 
Then the quotient torus $T^n_\mathbb{C}=T^d_\mathbb{C}/K_\mathbb{C}$ acts on $L_{comp}$, preserving its holomorphic structure $\bar{\partial}$. 
Let $[z]_{K_\mathbb{C}} \in M_{comp}$ denote a point represented by $z \in \mathbb{C}^d_\Delta$. 
Similarly $[z,v]_{K_\mathbb{C}}$ denotes a point in $L_{comp}$ represented by $(z,v) \in \mathbb{C}^d_\Delta \times \mathbb{C}$.

Next we define a meromorphic section $s^0_{comp}$ of $L_{comp}$ on $M_{comp}$ by 
$$ 
s_{comp}^0([z]_{K_\mathbb{C}}) =[z, \prod_{j=1}^d z_j^{\lambda_j}]_{K_\mathbb{C}} \in L_{comp}~~ \text{for $ z \in \mathbb{C}^d_\Delta $}.
$$ 
The section $s^0_{comp}$ is holomorphic and non-zero on $M_{comp}^0 = (\mathbb{C}^\times)^d / K_\mathbb{C}$, where $(\mathbb{C}^\times)^d=\{ z \in \mathbb{C}^d ~|~ z_i \ne 0 ~~ \text{for $i=1, \dots, d$} \} \subset \mathbb{C}^d_\Delta$.
So it induces a holomorphic trivialization of $L_{comp}$ on $M_{comp}^0$. 

For $m \in \Delta \cap (\mathfrak{t}^n)^*_{\mathbb{Z}}$ we define a holomorphic section $\sigma^m$ of $L_{comp}$ by 
\begin{align}\label{def-sigma-m-0}
\sigma^m ([z]_{K_\mathbb{C}})=[z, \prod_{j=1}^d z_j^{l_j(m)}]_{K_\mathbb{C}} \in L_{comp}~~ \text{for $ z \in \mathbb{C}^d_\Delta $.}
\end{align}
It is well known that $\{ \sigma^m ~|~ m \in \Delta \cap (\mathfrak{t}^n)^*_{\mathbb{Z}} \}$ is a basis of the space of holomorphic sections $H^0(L_{comp}, \bar{\partial})$.

Next we introduce a complex coordinate on $M_{comp}^0$. 
Fix a $\mathbb{Z}$-basis $p_1, \dots, p_n \in (\mathfrak{t}^n)^*_\mathbb{Z}$ and its dual basis $q_1, \dots, q_n \in \mathfrak{t}^n_\mathbb{Z}$ as in Subsection \ref{symp-toric-mfld}. 
Then we define a complex coordinate $\varphi^0_{comp} \colon M_{comp}^0 \to (\mathbb{C}^\times)^n$ by 
\begin{align}\label{comp-coord}
\varphi ^0_{comp}([z]_{K_\mathbb{C}})= (\prod_{j=1}^d z_j^{\langle p_1, r_j \rangle}, \dots, \prod_{j=1}^d z_j^{\langle p_n, r_j \rangle}),
\end{align}
where $r_j \in \mathfrak{t}^n_\mathbb{Z}$ is the vector in (\ref{polytope}) for $j=1, \dots, d$. 
Since $\prod_{j=1}^d z_j^{\langle p_i, r_j \rangle}$ is a $K_\mathbb{C}$-invariant meromorphic function on $\mathbb{C}^d$, it descends to a meromorphic function on $M_{comp}$.
If we set $(w_1, \dots,w_n)=\varphi ^0_{comp}([z]_{K_\mathbb{C}})$, then we have
\begin{align}\label{def-sigma-m}
\sigma^m & ([z]_{K_\mathbb{C}}) 
=(\prod_{i=1}^n w_i^{\langle m, q_i \rangle })s_{comp}^0([z]_{K_\mathbb{C}})~~\text{on $M_{comp}$.} 
\end{align}

\subsection{Symplectic potentials}\label{symp-potl}

In Subsections \ref{symp-toric-mfld} and \ref{comp-toric-mfld}, starting from a Delzant polytope $\Delta$ defined in (\ref{polytope}), we constructed a symplectic and complex toric manifold respectively.  
In this section we identify them, using symplectic potentials due to \cite{Gu1,Gu2,Ab1,Ab2}. 
We also recall a certain deformation of toric K\"ahler structures due to \cite{BFMN}.

The inclusion $\mu_K^{-1}(\iota^* \lambda_\Delta) \subset \mathbb{C}^d_\Delta$ induces a map $\chi_{can} \colon M_{symp} \to M_{comp}$.  
It is well known that this map is a diffeomorphism.
In \cite{Gu1,Gu2} Guillemin showed that this map is described by a single function $g_{can}$ as follows. 

Fix a $\mathbb{Z}$-basis $p_1, \dots, p_n \in (\mathfrak{t}^n)^*_\mathbb{Z}$ and its dual basis $q_1, \dots, q_n \in \mathfrak{t}^n_\mathbb{Z}$ as in Subsections \ref{symp-toric-mfld} and \ref{comp-toric-mfld}. 
Fix $\tilde{q}_i \in \mathfrak{t}^d_\mathbb{Z}$ so that $\pi(\tilde{q}_i)=q_i$ for $i=1, \dots, n$. 
Let $(x,[\theta])$ be the symplectic coordinate on $M_{symp}^0$ and $(w_1, \dots, w_n)$ the complex coordinate on $M_{comp}^0$ induced by $p_1, \dots, p_n \in (\mathfrak{t}^n)^*_\mathbb{Z}$ respectively. 
If we write $p= \sum_{i=1}^n x_i p_i$, then, by (\ref{symp-coord}) and (\ref{comp-coord}) we have 
$$
w_i (\chi_{can}(x, [\theta]))
= \prod_{j=1}^d (\sqrt{\frac{l_j(p)}{\pi}} e^{2\pi \sqrt{-1} \sum_{l=1}^n \langle u_j, \tilde{q}_l \rangle \theta_l})^{\langle p_i, r_j \rangle}
= e^{2\pi (\frac{\partial g_{can}}{\partial x_i} + \sqrt{-1} \theta_i)},
$$
where $ g_{can} \colon \mathrm{Int}\Delta \to \mathbb{R}$ is a function defined by 
$$ g_{can}(p)=\frac{1}{4 \pi } \sum_{j=1}^d l_j(p) \log l_j(p)+ ( \text{ a linear function on $(\mathfrak{t}^n)^*$ } ) ~~ \text{ for $ p \in \mathrm{Int} \Delta $ }.
$$ 
Note that $ g_{can} $ extends continuously to $ g_{can} \colon \Delta \to \mathbb{R} $. 

\begin{definition}
{\rm A function $g \in C^0 (\Delta)$ is a {\it symplectic potential} if and only if the following holds:
\newline
$(1)$ $g - g_{can} \in C^\infty(\Delta)$,
\newline
$(2)$ The Hessian $\mathrm{Hess}_p g$ of $g$ at $p$ is positive definite for any $p \in \mathrm{Int}\Delta$,
\newline
$(3)$ there exists a strictly positive function $\alpha \in C^\infty (\Delta)$ such that 
$$
\det(\mathrm{Hess}_p g)=[\alpha(p) \prod_{j=1}^d l_j(p)]^{-1} \hspace*{5mm} \text{for any $ p \in \mathrm{Int}\Delta $ .}
$$
\newline
The set of symplectic potentials is denoted by $SP(\Delta)$.}
\end{definition}

The following results are due to \cite{Gu1,Gu2, Ab1,Ab2}, supplemented by \cite{BFMN}.

\begin{theorem}\label{GuAb}
Let $\Delta \subset (\mathfrak{t}^n)^* $ be a Delzant polytope.
Let $(M_{symp},\omega)$  be a symplectic toric manifold and $(M_{comp},J)$ a complex toric manifold constructed from $\Delta$. 
Let $(L_{symp},h, \nabla)$ be a prequantum line bundle on $M_{symp}$ and $(L_{comp},\bar{\partial})$ a holomorphic line bundle on $M_{comp}$ constructed from $\Delta$.
Fix a $\mathbb{Z}$-basis $p_1, \dots, p_n \in (\mathfrak{t}^n)^*_\mathbb{Z}$. 
Let $(x,[\theta])$ be the symplectic coordinate on $M_{symp}^0$ and $w=(w_1, \dots, w_n)$ the complex coordinate on $M_{comp}^0$ induced by $p_1, \dots, p_n \in (\mathfrak{t}^n)^*_\mathbb{Z}$ respectively. 
\newline
$(A)$ Each $g \in SP(\Delta)$ defines a $T^n$-equivariant diffeomorphism $\chi_g \colon M_{symp} \to M_{comp}$ and a $T^n$-equivariant bundle isomorphism $\tilde{\chi}_g \colon L_{symp} \to L_{comp}$ such that the following holds:  
\newline
$(a1)$ The following diagram commutes: 
$$
\begin{array}{ccc}
(L_{symp},h, \nabla) & \overset{\tilde{\chi}_g}{\longrightarrow } & (L_{comp},\bar{\partial}) \\
\downarrow &                                           & \downarrow \\
(M_{symp},\omega) & \overset{\chi_g}{\longrightarrow }         & (M_{comp},J)
\end{array}
$$ 
$(a2)$ $(M_{symp},\omega, \chi_g^*J)$ is a K\"ahler manifold.
\newline
$(a3)$ $\nabla$ is the Chern connection of the Hermitian holomorphic line bundle $(L_{symp}, h, \tilde{\chi}_g^* \bar{\partial})$.
\newline
$(a4)$ $\chi_g|_{M_{symp}^0} \colon M_{symp}^0 \to M_{comp}^0$ is a diffeomorphism given by 
\begin{equation}\label{chi-g}
w_i(\chi_g(x,[\theta]))=e^{2 \pi (\frac{\partial g}{\partial x_i}+ \sqrt{-1} \theta_i)}~~\text{for $i=1, \dots, n$.}
\end{equation}
The map $\chi_g$ is independent of the choice of the basis $p_1, \dots, p_n \in (\mathfrak{t}^n)^*_\mathbb{Z}$.
Moreover, if we write $w_i=e^{2 \pi (y_i+ \sqrt{-1}\theta_i)}$ for $i=1, \dots, n$, then the inverse mapping $(\chi_g|_{M_{symp}^0})^{-1} \colon M_{comp}^0 \to M_{symp}^0$ is given by 
\begin{equation}\label{chi-g-inv}
x_i(\chi_g^{-1}(w))=\frac{\partial f}{\partial y_i}, ~~\theta_i((\chi_g^{-1})(w))=\theta_i ~~\text{for $i=1, \dots, n$,} 
\end{equation} 
where $f(y)=-g(x(y)) + \sum_{i=1}^n x_i(y)y_i$.
\newline
$(a5)$ 
$\displaystyle \tilde{\chi}_g^* s_{comp}^0 = e^{2\pi (g-\sum_{i=1}^n x_i \frac{\partial g}{\partial x_i} )}s_{symp}^0$ on $M_{symp}^0$.
\newline
$(B)$ On the other hand, if $\chi \colon M_{symp} \to M_{comp}$ is a $T^n$-equivariant diffeomorphism such that $(M_{symp},\omega, \chi^*J)$ is a K\"ahler manifold and that $\chi$ is homotopic to $\chi_{can}$, then there exists $g \in SP(\Delta)$ such that $\chi=\chi_g$. 
\end{theorem}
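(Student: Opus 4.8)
The plan is to build $\chi_g$ and $\tilde\chi_g$ explicitly over the open dense torus orbits, where everything is governed by the Legendre transform of $g$, and then to extend across the lower strata. Fix $g \in SP(\Delta)$. By condition (2) the gradient map $p \mapsto (\partial g/\partial x_1(p), \dots, \partial g/\partial x_n(p))$ is a local diffeomorphism of $\mathrm{Int}\Delta$ into $\mathbb{R}^n$, and condition (1) together with the explicit singular part $\frac{1}{4\pi}\sum_j l_j \log l_j$ of $g_{can}$ forces $|\nabla g| \to \infty$ along the inner normals as $p \to \partial\Delta$, so this map is in fact a diffeomorphism onto $\mathbb{R}^n$. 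Under the identifications $M_{symp}^0 \cong \mathrm{Int}\Delta \times \mathfrak{t}^n/\mathfrak{t}^n_{\mathbb{Z}}$, with coordinate $(x,[\theta])$, and $M_{comp}^0 \cong (\mathbb{C}^\times)^n$, with $w_i = e^{2\pi(y_i + \sqrt{-1}\theta_i)}$, formula (\ref{chi-g}), $w_i(\chi_g(x,[\theta])) = e^{2\pi(\partial g/\partial x_i + \sqrt{-1}\theta_i)}$, therefore defines a $T^n$-equivariant diffeomorphism $\chi_g \colon M_{symp}^0 \to M_{comp}^0$, and its inverse is (\ref{chi-g-inv}) with $f$ the Legendre transform of $g$, since $y_i = \partial g/\partial x_i$ is equivalent to $x_i = \partial f/\partial y_i$. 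Basis independence of $\chi_g$ follows because $\nabla g$ transforms as a covector under a change of $p_1, \dots, p_n$.

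\textbf{Step 2: extension to $M_{symp}$ and the Kähler property (a2) --- the crux.} To extend $\chi_g$ to a diffeomorphism $M_{symp} \to M_{comp}$ one passes to the standard charts near the faces of $\Delta$: near a vertex both manifolds are modeled on $\mathbb{C}^n$, and along a facet $F_j$ the behaviour of $\nabla g$ is controlled by the $l_j \log l_j$ term (condition (1)), while condition (3), $\det(\mathrm{Hess}_p g) = [\alpha(p)\prod_j l_j(p)]^{-1}$ with $\alpha$ smooth and positive, is exactly the condition making the transported Kähler metric --- whose coefficient matrices in $(x,[\theta])$ are $(\partial^2 g/\partial x_i \partial x_j)$ and its inverse --- and hence $\chi_g$ itself, extend smoothly and non-degenerately across every facet. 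This is the content of \cite{Gu1,Gu2,Ab1,Ab2} and is the technical heart of the statement; I would invoke it. Granted the extension, (a2) is a local check on $M_{symp}^0$: the metric $\omega(\cdot, \chi_g^* J\, \cdot)$ is positive definite because $(\partial^2 g/\partial x_i \partial x_j) > 0$ by condition (2), it is $\omega$-compatible because the $w_i$ are holomorphic coordinates on $M_{comp}^0$ and $\omega = \sum_i dx_i \wedge d\theta_i$, and it is Kähler because $\omega$ is closed; all of this extends across the strata together with $\chi_g$.

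\textbf{Step 3: the line bundle lift and (a1), (a3), (a5).} Over $M_{symp}^0$ define $\tilde\chi_g$ by $\tilde\chi_g^* s_{comp}^0 = e^{2\pi(g - \sum_i x_i\,\partial g/\partial x_i)} s_{symp}^0 = e^{-2\pi f} s_{symp}^0$, which is (a5), and extend by $T^n$-equivariance; the local analysis of Step 2, now also matching the orders $\lambda_j$ along which $s_{comp}^0$ vanishes on the facets, shows that $\tilde\chi_g$ extends to a bundle isomorphism over $M_{symp}$ covering $\chi_g$ and equivariant by construction, which gives (a1). For (a3): $\nabla h = 0$ holds automatically, so it suffices to show $\nabla^{0,1} = \tilde\chi_g^* \bar\partial$. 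In the unitary trivialization by $s_{symp}^0$ one has $\nabla = d - 2\pi\sqrt{-1}\sum_i x_i\,d\theta_i$ by Lemma \ref{cansymp}(2); splitting $d\theta_i$ into types for the complex structure $\chi_g^* J$, whose $(0,1)$-forms are spanned by $dy_i - \sqrt{-1}\,d\theta_i$ with $dy_i = \sum_j (\partial^2 g/\partial x_i \partial x_j)\,dx_j$, and using $x_i = \partial f/\partial y_i$, one computes that the connection form has $(0,1)$-part $2\pi\bar\partial f$; hence $\psi\,s_{symp}^0$ is $\nabla^{0,1}$-closed iff $\bar\partial(\log\psi + 2\pi f) = 0$, i.e. iff $\psi = e^{-2\pi f}\cdot(\text{holomorphic function of }w)$. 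These are precisely the images under $\tilde\chi_g^*$ of the local holomorphic sections $s_{comp}^0 \cdot (\text{holomorphic function})$ of $L_{comp}$, so $\nabla^{0,1} = \tilde\chi_g^* \bar\partial$ and $\nabla$ is the Chern connection of $(L_{symp}, h, \tilde\chi_g^* \bar\partial)$.

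\textbf{Step 4: the converse (B).} Let $\chi \colon M_{symp} \to M_{comp}$ be $T^n$-equivariant with $(M_{symp}, \omega, \chi^* J)$ Kähler and $\chi$ homotopic to $\chi_{can}$. Restricting to $M_{symp}^0$ and using equivariance, $\chi$ must carry $(x,[\theta])$ to $w$ with $w_i = e^{2\pi(\phi_i(x) + \sqrt{-1}\theta_i)}$ for smooth functions $\phi_i$ on $\mathrm{Int}\Delta$, the homotopy to $\chi_{can}$ fixing the winding of the $\theta$-dependence. Integrability of $\chi^* J$ together with $\omega$-compatibility forces the matrix $(\partial\phi_i/\partial x_j)$ to be symmetric, hence $\sum_i \phi_i\,dx_i = dg$ for some function $g$ on $\mathrm{Int}\Delta$; positivity of the Kähler metric forces $(\partial^2 g/\partial x_i \partial x_j) > 0$, which is condition (2); and the requirement that $\chi$ extend smoothly over all of $M_{symp}$ forces, by running the normal-form analysis of Step 2 in reverse, that $g - g_{can}$ be smooth up to $\partial\Delta$ and that $\det(\mathrm{Hess}_p g)$ have the boundary behaviour of condition (3). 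Thus $g \in SP(\Delta)$, and by Step 1 $\chi = \chi_g$. As this makes clear, the one genuinely hard point throughout is the face-by-face extension analysis of Step 2, tying conditions (1)--(3) to smoothness and non-degeneracy across $\partial\Delta$, for which I would rely on \cite{Gu1,Gu2,Ab1,Ab2}.
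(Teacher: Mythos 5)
The paper offers no proof of Theorem~\ref{GuAb}: immediately before the statement it says ``The following results are due to \cite{Gu1,Gu2,Ab1,Ab2}, supplemented by \cite{BFMN}'' and leaves it at that, so there is no in-paper argument to compare against. Your four-step sketch is a correct reconstruction of how the theorem follows from those sources, and you are explicit about where the hard analysis lives, namely the facet-by-facet extension of $\chi_g$ and $\tilde{\chi}_g$ across $\partial\Delta$ controlled by conditions $(1)$ and $(3)$ on the symplectic potential, which you defer to \cite{Gu1,Gu2,Ab1,Ab2} exactly as the paper does. Your Legendre-transform description of $(a4)$, the identity $\tilde{\chi}_g^* s^0_{comp} = e^{-2\pi f}s^0_{symp}$ for $(a5)$, the computation that the $(0,1)$-part of the connection form is $2\pi\bar{\partial}f$ for $(a3)$, and the outline of the converse $(B)$ are all sound. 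One small wording nit: in Step 4 you write that integrability of $\chi^*J$ together with $\omega$-compatibility forces $(\partial\phi_i/\partial x_j)$ to be symmetric, but $\chi^*J$ is automatically integrable since it is the pullback of an integrable structure by a diffeomorphism; what forces the symmetry is precisely compatibility, i.e.\ that $\omega(\cdot,\chi^*J\cdot)$ be a symmetric bilinear form. This does not affect the logic of your sketch.
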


In \cite{BFMN} the authors considered a certain 1-parameter family of symplectic potentials, which provides a 1-parameter family of identification of a symplectic toric manifold with a complex toric manifold.
In other words, it provides a deformation of toric K\"ahler structures. 
The authors proved the following remarkable property of the deformation. 

\begin{proposition}\label{thm:bfmn}
Let $\chi_s \colon M_{symp} \to M_{comp}$ and $\tilde{\chi_s} \colon L_{symp} \to L_{comp}$ be the diffeomorphism and the bundle isomorphism defined by $g_s =g_0 + s \nu \in SP(\Delta)$ for $s \ge 0$ respectively, where $\nu \colon \Delta \to \mathbb{R}$ is a smooth strictly convex function.   
Then, for each $m  \in \Delta \cap (\mathfrak{t}^n)^*_{\mathbb{Z}}$, the section $\frac{\tilde{\chi}_s^*\sigma^m}{\Vert \tilde{\chi}_s^*\sigma^m \Vert_{L^1(M_{symp})}}$ converges to a delta-function section supported on the fiber $\mu_{T^n}^{-1}(m)$ in the following sense: 
there exists a covariantly constant section $\delta_m$ of $(L_{symp},h,\nabla)|_{\mu_{T^n}^{-1}(m)}$ and a measure $d \theta_m$ on $\mu_{T^n}^{-1}(m)$ such that, for any smooth section $\phi$ of the dual line bundle $L_{symp}^*$, the following holds
$$
\lim_{s \to \infty} \int_{M_{symp}} \biggl\langle \phi, \frac{\tilde{\chi}_s^*\sigma^m}{\Vert \tilde{\chi}_s^*\sigma^m \Vert_{L^1(M_{symp})}} \biggr\rangle \frac{\omega^n}{n !} = \int_{\mu_{T^n}^{-1}(m)} \langle \phi , \delta_m \rangle d \theta_m.
$$ 
\end{proposition}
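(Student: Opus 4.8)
The plan is to compute $\tilde{\chi}_s^*\sigma^m$ explicitly on the open dense subset $M_{symp}^0$ and then reduce the statement to a Laplace-type concentration estimate for an integral over $\Delta^0$. Write $m=\sum_{i=1}^n m_i p_i$, so $\langle m,q_i\rangle=m_i$, and recall from (\ref{def-sigma-m}) that $\sigma^m=(\prod_i w_i^{m_i})\,s_{comp}^0$ on $M_{comp}^0$. Combining this with Theorem~\ref{GuAb}(a4),(a5), in the symplectic coordinates $(x,[\theta])\in\Delta^0\times\mathbb{R}^n/\mathbb{Z}^n$ one obtains on $M_{symp}^0$
$$
\tilde{\chi}_s^*\sigma^m(x,[\theta]) = e^{2\pi\Phi_s(x)}\, e^{2\pi\sqrt{-1}\sum_{i=1}^n m_i\theta_i}\, s_{symp}^0(x,[\theta]),
$$
where $\Phi_g(x):=g(x)+\sum_{i=1}^n(m_i-x_i)\frac{\partial g}{\partial x_i}(x)$ and $\Phi_s:=\Phi_{g_s}=\Phi_{g_0}+s\Psi$ with $\Psi(x):=\nu(x)+\sum_{i=1}^n(m_i-x_i)\frac{\partial\nu}{\partial x_i}(x)$. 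Since $s_{symp}^0$ is a unitary frame on $M_{symp}^0$, since $\frac{\omega^n}{n!}=dx_1\cdots dx_n\,d\theta_1\cdots d\theta_n$ by Lemma~\ref{cansymp}(1), and since $M_{symp}\setminus M_{symp}^0$ has measure zero, we get $\Vert\tilde{\chi}_s^*\sigma^m\Vert_{L^1(M_{symp})}=\int_{\Delta^0}e^{2\pi\Phi_s(x)}\,dx$ and, for a smooth section $\phi$ of $L_{symp}^*$,
$$
\int_{M_{symp}}\langle\phi,\tilde{\chi}_s^*\sigma^m\rangle\frac{\omega^n}{n!}=\int_{\Delta^0}e^{2\pi\Phi_s(x)}\,\Theta(x)\,dx,\qquad \Theta(x):=\int_{\mathbb{R}^n/\mathbb{Z}^n}e^{2\pi\sqrt{-1}\sum_i m_i\theta_i}\langle\phi,s_{symp}^0\rangle(x,[\theta])\,d\theta .
$$
Here $\Theta$ is continuous and bounded on $\Delta^0$, and by Lemma~\ref{cansymp}(3) we have $\Theta(m)=\int_{\mu_{T^n}^{-1}(m)}\langle\phi,\delta_m\rangle\,d\theta_m$ with $\delta_m$ the covariantly constant section of that lemma and $d\theta_m$ the flat measure in the coordinate $[\theta]$. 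So it suffices to prove that $\int_{\Delta^0}e^{2\pi\Phi_s}\Theta\,dx\big/\int_{\Delta^0}e^{2\pi\Phi_s}\,dx\to\Theta(m)$ as $s\to\infty$.

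Next I would record the three properties of $\Psi$ that drive the concentration. Because $\nu$ is smooth and strictly convex on $\Delta$, the supporting-hyperplane inequality gives $\Psi(x)<\nu(m)=\Psi(m)$ for every $x\in\Delta$ with $x\ne m$; direct differentiation gives $\nabla\Psi(x)=(\mathrm{Hess}_x\,\nu)(m-x)$, so $x=m$ is the unique critical point of $\Psi$; and $\mathrm{Hess}_m\,\Psi=-\mathrm{Hess}_m\,\nu$ is negative definite. Thus $e^{2\pi s\Psi}$ behaves like an approximate identity concentrating at $x=m$. I would also check the integrability of $e^{2\pi\Phi_{g_0}}$ on $\Delta^0$: since $g_0-g_{can}\in C^\infty(\Delta)$ and $dg_{can}(p)$ is $\frac{1}{4\pi}\sum_j(\log l_j(p))\,r_j$ plus a smooth term, and since $\langle m-p,r_j\rangle=l_j(m)$ on $F_j$, one finds near a facet $F_j$ that $\Phi_{g_0}(p)=\frac{l_j(m)}{4\pi}\log l_j(p)+O(1)$; hence $e^{2\pi\Phi_{g_0}}=O\big(\prod_{j:\,l_j(m)>0}l_j(p)^{l_j(m)/2}\big)$ is bounded on $\Delta$, so in particular $e^{2\pi\Phi_{g_0}}\in L^1(\Delta^0)$, while $\Phi_{g_0}$ is continuous (hence bounded) on every compact subset of $\mathrm{Int}\Delta$.

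The core of the argument is then a standard Laplace-type estimate. Fix $\epsilon>0$ small enough that $\overline{B_\epsilon(m)}\subset\mathrm{Int}\Delta$ (taking $m\in\mathrm{Int}\Delta$ for now). On $\Delta^0\setminus B_\epsilon(m)$, compactness and the strict maximum give $\Psi\le\nu(m)-c_\epsilon$ for some $c_\epsilon>0$, hence $\int_{\Delta^0\setminus B_\epsilon(m)}e^{2\pi\Phi_s}|\Theta|\,dx\le e^{2\pi s(\nu(m)-c_\epsilon)}\Vert\Theta\Vert_\infty\int_{\Delta^0}e^{2\pi\Phi_{g_0}}\,dx=Ce^{2\pi s(\nu(m)-c_\epsilon)}$. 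On $B_\epsilon(m)$, using that $\Phi_{g_0}$ is bounded below there and the quadratic lower bound $\Psi(x)\ge\nu(m)-c_1|x-m|^2$ coming from $\mathrm{Hess}_m\,\Psi<0$, one gets $\int_{B_\epsilon(m)}e^{2\pi\Phi_s}\,dx\ge c'e^{2\pi s\nu(m)}\int_{B_\epsilon(m)}e^{-2\pi s c_1|x-m|^2}\,dx\ge c''s^{-n/2}e^{2\pi s\nu(m)}$ for large $s$. Dividing, the contribution of the outer region to the normalized integral is $O(s^{n/2}e^{-2\pi s c_\epsilon})\to0$, so all the mass concentrates in $B_\epsilon(m)$; since $\Theta$ is continuous at $m$ and $\epsilon>0$ is arbitrary, $\int_{\Delta^0}e^{2\pi\Phi_s}\Theta\,dx\big/\int_{\Delta^0}e^{2\pi\Phi_s}\,dx\to\Theta(m)$, which is the asserted limit, with $\delta_m$ as in Lemma~\ref{cansymp}(3) and $d\theta_m=d\theta$. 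For $m\in\partial\Delta$ one runs the same argument in the affine toric chart associated to the vertex of the minimal face containing $m$, using the analogue of Lemma~\ref{cansymp} there to identify the limiting covariantly constant section and the measure on the (lower-dimensional) fiber $\mu_{T^n}^{-1}(m)$.

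The step I expect to be the main obstacle is making this Laplace estimate genuinely uniform. One must simultaneously handle the singular behaviour of the symplectic potential $g_0$ (inherited from $g_{can}$) near $\partial\Delta$ — it is actually helpful along facets $F_j$ with $l_j(m)>0$, where $e^{2\pi\Phi_{g_0}}\to0$, and harmless along facets containing $m$ — together with the matching lower bound showing $\int_{B_\epsilon(m)}e^{2\pi\Phi_s}\,dx$ is at least a constant times $s^{-n/2}e^{2\pi s\nu(m)}$, and then patch the interior model with the boundary-chart models. Everything else is bookkeeping once the explicit formula for $\tilde{\chi}_s^*\sigma^m$ and the three properties of $\Psi$ are in hand.
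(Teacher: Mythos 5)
Your argument is essentially the same as the one the paper defers to in \cite{BFMN} and reproduces in slightly generalized form in its own Proposition~\ref{sigma-m}: compute $\tilde{\chi}_s^*\sigma^m$ explicitly via Theorem~\ref{GuAb}(a4),(a5) to isolate the $s$-dependent factor $e^{-2\pi s\alpha_m}$ (your $e^{2\pi s\Psi}$, with $\alpha_m=-\Psi$), use strict convexity of $\nu$ to get two-sided quadratic control of the exponent around $m$, and then run a Laplace-type concentration estimate to deduce weak-$*$ convergence to the Bohr--Sommerfeld delta section, with the boundary $m\in\partial\Delta$ handled in an affine toric chart as in \cite{BFMN}. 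Your verification of the properties of $\Psi$, of the boundedness of $e^{2\pi\Phi_{g_0}}$ coming from $g_{\mathrm{can}}$, and the outer/inner Laplace split all match the computations in the paper's proof of Proposition~\ref{sigma-m}(3)--(4), so the proposal is correct and follows the same route.
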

Note that the authors proved the above results not only for $m  \in \mathrm{Int}\Delta \cap (\mathfrak{t}^n)^*_{\mathbb{Z}}$ but for all $m  \in \Delta \cap (\mathfrak{t}^n)^*_{\mathbb{Z}}$.
In Proposition \ref{sigma-m} below we slightly generalize this proposition.
\section{Submanifolds under the deformation due to \cite{BFMN}}\label{sec-identify} 
In the last section, starting from a Delzant polytope $\Delta$ defined by (\ref{polytope}), we constructed a symplectic toric manifold $(M_{symp},\omega)$ and a complex toric manifold $(M_{comp},J)$. 
In this section, we study the change of the identification $\chi_s \colon (M_{symp},\omega) \to (M_{comp},J)$  and its lift $\tilde{\chi}_s \colon (L_{symp}, h,\nabla) \to (L_{comp}, \overline{\partial})$ induced by a family of symplectic potentials $g_s=s_0 + s \nu \in SP(\Delta)$ for $s \ge 0$, where $\nu \in C^\infty(\Delta)$ is a weakly convex function. 
In particular, we study the behavior of submanifolds and the prequantum line bundle on it under the change of identification of the ambient toric manifolds. 

\subsection{Identification of submanifolds} 
Given a complex submanifold $V_{comp}$ of $(M_{comp},J)$, we consider the change of the identification $\chi_s \colon (M_{symp},\omega) \to (M_{comp},J)$. 
This implies that the complex structure of the complex submanifold remains the same, but the symplectic structure $(\chi_s^{-1})^* \omega$ on it changes. 
In this subsection, we develop a method to identify $(V_{comp}, (\chi_s^{-1})^* \omega)$ as a symplectic manifold. 
We also lift the identification to the prequantum line bundle.
\begin{proposition}\label{identify1} 
Let $(V_{comp},J^V)$ be a compact complex submanifold of $(M_{comp},J)$ and $\rho_{comp} \colon V_{comp} \to M_{comp}$ the embedding. 
Set $V_{symp}= \chi_0^{-1}(V_{comp})$, and denote the embedding by $\rho_0 \colon V_{symp} \to M_{symp}$. 
\\
$(1)$ There exists an embedding $\rho_s \colon V_{symp} \to M_{symp}$ such that $\rho_s^* \omega= \rho_0^* \omega$ for each $s \ge 0$.
\\ 
$(2)$ There exists a diffeomorphism $\underline{\chi}_s \colon V_{symp} \to V_{comp}$ such that, for each $s \ge 0$, $(V_{symp}, \rho_0^* \omega, \underline{\chi}_s^* J^V)$ is a K\"ahler manifold and the following diagram commutes:  
$$
\begin{array}{ccc}
(M_{symp},\omega) & \overset{\chi_s}{\longrightarrow } & (M_{comp}, J)\\
\uparrow \rho_s &                                           & \uparrow \rho_{comp}  \\
(V_{symp}, \rho_0^* \omega)  & \overset{\underline{\chi}_s}{\longrightarrow }         & (V_{comp}, J^V).
\end{array}
$$
$(3)$ The maps $\rho_s$ and $\underline{\chi}_s$ are canonically defined and depend smoothly on $s \ge 0$.
\end{proposition}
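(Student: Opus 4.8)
The plan is to realize the identification of $(V_{comp},(\chi_s^{-1})^*\omega)$ with a fixed symplectic manifold by applying a Moser-type argument in the total space $M_{symp}$, in a way that restricts well to the submanifold. First I would set $\omega_s = (\chi_0 \circ \chi_s^{-1})^*\omega$, a family of $T^n$-invariant Kähler forms on $M_{symp}$ for the \emph{fixed} complex structure $\chi_0^*J$; by Theorem \ref{GuAb} and the explicit formula (\ref{chi-g}), each $\omega_s$ has the same moment polytope $\Delta$ and cohomology class as $\omega=\omega_0$, so $\omega_s - \omega_0$ is exact and $T^n$-invariant. The key point is to choose the primitive $1$-form canonically: on $M_{symp}^0$ write, in the symplectic coordinates $(x,[\theta])$ attached to a $\mathbb{Z}$-basis, $\omega_s = \sum_i d\big(\tfrac{\partial g_0}{\partial x_i}\big)\wedge d\theta_i$ pushed forward appropriately—more precisely use the formula for $\chi_g$ to see that $\frac{d}{ds}\omega_s = d\beta_s$ where $\beta_s$ is built out of $\frac{\partial}{\partial s}$ of the symplectic potential $g_s = g_0 + s\nu$, and this $\beta_s$ extends smoothly across $M_{symp}\setminus M_{symp}^0$ because $\nu \in C^\infty(\Delta)$ (unlike $g_{can}$, the deformation term has no logarithmic singularity at the walls). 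This gives a canonical, smooth-in-$s$, $T^n$-invariant family $\beta_s$ with $\dot\omega_s = d\beta_s$.

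Next I would define the time-dependent vector field $X_s$ on $M_{symp}$ by $i(X_s)\omega_s = -\beta_s$, and let $\Phi_s$ be its flow from time $0$ to time $s$ (compactness of $M_{symp}$ ensures the flow is globally defined for all $s \ge 0$). The standard Moser computation gives $\Phi_s^*\omega_s = \omega_0 = \omega$. Now set $\rho_s = \Phi_s \circ \rho_0 \colon V_{symp} \to M_{symp}$; this is an embedding with $\rho_s^*\omega = \rho_0^*(\Phi_s^*\omega_s)$—but to make this equal $\rho_0^*\omega$ I must know $\Phi_s$ preserves the image $V_{symp}=\chi_0^{-1}(V_{comp})$, which is \emph{not} automatic. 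The fix: observe that $\chi_0(V_{symp}) = V_{comp}$ is a fixed complex submanifold, and $\chi_s^{-1}(V_{comp})$ is a moving family of submanifolds; instead of flowing inside $M_{symp}$ by an arbitrary Moser field, I would instead directly transport along the family $\chi_s^{-1}(V_{comp})$. Concretely, define $\rho_s \colon V_{symp} \to M_{symp}$ by $\rho_s = \chi_s^{-1}\circ \rho_{comp}\circ (\chi_0^{-1}\circ \chi_0)|_{V_{symp}}$, i.e. $\rho_s = \chi_s^{-1}\circ(\chi_0|_{V_{symp}})$ composed with the inclusion $V_{comp}\hookrightarrow M_{comp}$—this has image exactly $\chi_s^{-1}(V_{comp})$, and then $\underline{\chi}_s \colon V_{symp}\to V_{comp}$ is forced by commutativity of the diagram, namely $\underline{\chi}_s = \chi_s\circ\rho_s = \chi_0|_{V_{symp}}$ viewed as a map into $V_{comp}$, which is a fixed diffeomorphism independent of $s$; then $\underline{\chi}_s^*J^V = (\chi_0|_{V_{symp}})^*J^V$ is a fixed Kähler structure, so part (2)'s Kähler assertion reduces to the $s=0$ case. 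The real content left is purely part (1): showing $\rho_s^*\omega = \rho_0^*\omega$, i.e. that the pulled-back symplectic form on the moving submanifold $\chi_s^{-1}(V_{comp})$ agrees with that on $\chi_0^{-1}(V_{comp})$ under the natural identification $\chi_0\circ\chi_s^{-1}$.

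For that I would invoke a \emph{relative} Moser argument: on $V_{comp}$ (fixed complex manifold) we have the family of symplectic forms $\omega_s^V := (\chi_s^{-1})^*\big((\chi_s^{-1})^*\omega$ restricted$\big)$—cleaner: $\omega_s^V = \rho_{comp}^*\big((\chi_s^{-1})^*\omega\big)$ on the fixed manifold $V_{comp}$, a family of Kähler forms for the fixed $J^V$. These all have the same cohomology class (the class of $\rho_{comp}^*\omega_{M_{comp}}$-type object is topological, independent of $s$), so $\dot\omega_s^V = d\gamma_s$; moreover, by restricting the ambient primitive, $\gamma_s = \rho_{comp}^*\big((\chi_s^{-1})^*\beta_s\big)$ works and is canonical and smooth in $s$. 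Running Moser on $V_{comp}$ with $i(Y_s)\omega_s^V = -\gamma_s$ and flow $\Psi_s$ gives $\Psi_s^*\omega_s^V = \omega_0^V$. Then $\rho_s := \chi_s^{-1}\circ\rho_{comp}\circ\Psi_s \circ (\text{identification }V_{symp}\cong V_{comp})$ and $\underline\chi_s := \rho_{comp}\circ\Psi_s\circ(\cdots)$ do the job, all canonically constructed and depending smoothly on $s$ by smooth dependence of solutions of ODEs on parameters, giving part (3). The main obstacle I anticipate is precisely the bookkeeping in the previous sentence: verifying that the ambient primitive $\beta_s$ (which is genuinely canonical only because of the walls-regularity of $\nu$) restricts to a primitive of $\dot\omega_s^V$ on $V_{comp}$, and that the two Moser flows are compatible so that the final diagram commutes on the nose—this requires carefully tracking that $\chi_s$ is $T^n$-equivariant and that $V_{comp}$, while not $T^n$-invariant, still inherits enough structure for the restricted form $(\chi_s^{-1})^*\omega$ to be computed from the global symplectic potential. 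I would also need to check $\rho_s$ remains an embedding for all $s\ge 0$, which follows since $\chi_s^{-1}$ is a diffeomorphism and $\Psi_s$ is a diffeomorphism of $V_{comp}$.
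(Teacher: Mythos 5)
Your final approach---a relative Moser argument on $V_{comp}$ along the family $\omega^V_s = \rho_{comp}^*\big((\chi_s^{-1})^*\omega\big)$, followed by setting $\rho_s = \chi_s^{-1}\circ\rho_{comp}\circ(\text{Moser flow})\circ(\text{identification})$---is essentially the paper's proof. The paper sets $\psi_s = \chi_0\circ\chi_s^{-1}$, observes $\psi_s^*\omega_0 = \omega_s$, and takes the Moser primitive $\eta_s = \psi_s^*\{i(X_s)\omega_0\}$ where $X_s$ is the velocity field of the family $\psi_s$; it then integrates the field $Y_s$ on $V_{comp}$ defined by $i(Y_s)(\rho_{comp}^*\omega_s) + \rho_{comp}^*\eta_s = 0$ to get $\phi_s$, and sets $\rho_s = \chi_s^{-1}\circ\rho_{comp}\circ\phi_s\circ\chi_0|_{V_{symp}}$, $\underline{\chi}_s = \phi_s\circ\chi_0|_{V_{symp}}$.

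The one genuine difference is your choice of primitive. You build $\beta_s$ from $\frac{\partial g_s}{\partial s} = \nu$ via the symplectic potential and then argue that $\nu\in C^\infty(\Delta)$ (no log singularities) lets $\beta_s$ extend smoothly past the boundary strata. This is plausible but is exactly the step you flag as "the main obstacle" and do not actually carry out; it requires expressing $\dot\omega_s$ in complex coordinates and checking the resulting $1$-form extends to all of $M_{comp}$, not just $M_{comp}^0$. The paper avoids this entirely: $\eta_s = \psi_s^*\{i(X_s)\omega_0\}$ is automatically a globally defined smooth $1$-form on $M_{comp}$ because $X_s$ is the velocity of a smooth family of diffeomorphisms, and $d\eta_s = \frac{d\omega_s}{ds}$ is the standard Cartan/Moser identity $\frac{d}{ds}\psi_s^*\omega_0 = \psi_s^*L_{X_s}\omega_0$. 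That choice removes the smoothness-extension issue at no cost and would simplify your argument considerably.

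Two smaller points. First, your second paragraph's claim that commutativity forces $\underline{\chi}_s = \chi_0|_{V_{symp}}$ (hence $J_s$ independent of $s$) is a false step based on the naive $\rho_s$ with no Moser correction, and if it were true it would trivialize the proposition; you correctly abandon it, but then the Kähler assertion in part $(2)$ needs to be re-established for the corrected $\underline{\chi}_s = (\text{Moser flow})\circ\chi_0|_{V_{symp}}$, which does depend on $s$. The paper does this in one line: $\underline{\chi}_s^*(\rho_{comp}^*\omega_s) = \rho_0^*\omega$ shows $(V_{symp},\rho_0^*\omega,\underline{\chi}_s^*J^V)$ is isomorphic to the Kähler manifold $(V_{comp},\rho_{comp}^*\omega_s,J^V)$. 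Second, you appeal to $T^n$-invariance and cohomological arguments ("same moment polytope and cohomology class") that the paper does not need; the Moser computation alone, using $\psi_s^*\omega_0=\omega_s$, already produces the primitive, so the cohomological discussion is superfluous.
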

\begin{proof}
$(1)$ If we set $\psi_s= \chi_0 \circ (\chi_s)^{-1} \colon M_{comp} \to M_{comp}$ and $\omega_s= ((\chi_s)^{-1})^* \omega \in \Omega^2(V_{comp})$ for each $s \ge 0$, then we have 
\begin{equation}\label{omega-s}
\psi_s^* \omega_0=\omega_s.
\end{equation}
We show the following. 
\begin{claim}\label{claim1}
There exists a diffeomorphism $\phi_s \colon V_{comp} \to V_{comp}$ for each $s \ge 0$, such that $\phi_0 =id_{V_{comp}}$ and $\phi_s^* (\rho_{comp}^* \omega_s)= \rho_{comp}^* \omega_0$.
\end{claim}
\begin{proof}[Proof of Claim \ref{claim1}] 
Define a vector field $X_s \in \mathcal{X}(M_{comp})$ by 
\begin{equation}\label{X}
(X_s)_{\psi_s(p)}=\frac{d}{dt}\Big\vert_{t=0}\psi_{s+t}(p) \in T_{\psi_s(p)}M_{comp} ~~\text{for $p \in M_{comp}$}.
\end{equation}
By (\ref{omega-s}) we have
\begin{equation}\label{eta}
\frac{d \omega_s}{ds}=\psi_s^*(L_{X_s} \omega_0)=d \eta_s, ~~\text{where $\eta_s = \psi_s^* \{ i(X_s)\omega_0 \} \in \Omega^1(M_{comp})$.}
\end{equation}

Suppose that there exists a diffeomorphism $\phi_s \colon V_{comp} \to V_{comp}$ such that $\phi_s^* (\rho_{comp}^* \omega_s)= \rho_{comp}^* \omega_0$ for each $s \ge 0$. 
If we define a vector field $Y_s \in \mathcal{X}(V_{comp})$ by 
$
(Y_s)_{\phi_s(p)}=\frac{d}{dt}\big\vert_{t=0}\phi_{s+t}(p) \in T_{\phi_s(p)}V_{comp} ~~\text{for $p \in V_{comp}$},
$
then we have
\begin{align*}
0=\frac{d}{ds}\{ \phi_s^*(\rho_{comp}^* \omega_s) \} 
&= \phi_s^* \{ L_{Y_s}(\rho_{comp}^* \omega_s)+ \frac{d \rho_{comp}^* \omega_s}{ds} \} \\
&= \phi_s^* d \{ i(Y_s)(\rho_{comp}^* \omega_s) + \rho_{comp}^* \eta_s \}.
\end{align*}
Therefore, if we define $Y_s \in \mathcal{X}(V_{comp})$ conversely by 
\begin{equation}\label{Y}
i(Y_s)(\rho_{comp}^* \omega_s) + \rho_{comp}^* \eta_s =0,
\end{equation}
then we have a desired diffeomorphism $\phi_s \colon V_{comp} \to V_{comp}$ by integrating $Y_s \in \mathcal{X}(V_{comp})$. 
Moreover we have $\phi_0 =id_{V_{comp}}$ from this construction.  
\end{proof}

Since $\rho_{comp} \circ \chi_0 |_{V_{symp}}= \chi_0 \circ \rho_0$, we have $(\chi_0 |_{V_{symp}})^* (\rho_{comp}^* \omega_0)= \rho_0^* \omega$. 
Define a smooth map $\rho_s \colon V_{symp} \to M_{symp}$ by $\rho_s =(\chi_s)^{-1} \circ \rho_{comp} \circ \phi_s \circ \chi_0 |_{V_{symp}}$.
By Claim \ref{claim1} we have 
$$
\rho_s^* \omega= (\chi_0 |_{V_{symp}})^* \phi_s^* (\rho_{comp}^*\omega_s)=(\chi_0 |_{V_{symp}})^* (\rho_{comp}^* \omega_0)=\rho_0^* \omega.
$$ 
$(2)$ Define $\underline{\chi}_s \colon V_{symp} \to V_{comp}$ by $\underline{\chi}_s = \phi_s \circ \chi_0 |_{V_{symp}}$.
Then we have $\chi_s \circ \rho_s = \rho_{comp} \circ \underline{\chi}_s$. 
Since $\underline{\chi}_s^* (\rho_{comp}^* \omega_s)=\rho_s^*(\chi_s^* \omega_s)=\rho_s^* \omega=\rho_0^* \omega$, we see that $(V_{symp}, \rho_0^* \omega, \underline{\chi}_s^* J^V)$ is isomorphic to $(V_{comp}, \rho_{comp}^* \omega_s, J^V)$.
Therefore $(V_{symp}, \rho_0^* \omega, \underline{\chi}_s^* J^V)$ is a K\"ahler manifold.
\\
$(3)$ In the above construction of $\phi_s$ there is no ambiguous choice.
So $\phi_s$ is canonically defined and depends smoothly on $s \ge 0$. 
Therefore the maps $\rho_s$ and $\underline{\chi}_s$ are canonically defined and depend smoothly on $s \ge 0$.
\end{proof}

Next we construct a lift of the maps $\rho_s \colon V_{symp} \to M_{symp}$ and $\underline{\chi}_s \colon V_{symp} \to V_{comp}$ to the prequantum line bundle.

\begin{proposition}\label{identify2}
In addition to the assumption of Proposition \ref{identify1}, let $(L_{symp}^V, h^V,  \nabla^V)= \rho_0^* (L_{symp}, h, \nabla)$ and $(L_{comp}^V, \overline{\partial}^V)=\rho_{comp}^*(L_{comp}, \overline{\partial})$ be a prequantum line bundle on $(V_{symp}, \rho_0^*\omega)$ and a holomorphic line bundle on $(V_{comp}, J^V)$ respectively. 
Let $\tilde{\rho}_{comp} \colon L_{comp}^V \to L_{comp}$ be the canonical lift of the embedding $\rho_{comp} \colon V_{comp} \to M_{comp}$. 
\\
$(1)$ There exists a lift $\tilde{\rho}_s \colon L_{symp}^V \to L_{symp}$ of $\rho_s \colon V_{symp} \to M_{symp}$ such that $\tilde{\rho}_s^* (L_{symp},h, \nabla) = (L_{symp}^V,h^V, \nabla^V)$ for $s \ge 0$. 
\\
$(2)$ There exists a lift $\tilde{\underline{\chi}}_s \colon L_{symp}^V \to L_{comp}^V$ of the map $\underline{\chi}_s \colon V_{symp} \to V_{comp}$ for $s \ge 0$ such that, for $s \ge 0$, $\nabla^V$ is the Chern connection of $(L_{symp}^V, h^V, \tilde{\underline{\chi}}_s^* \overline{\partial}^V)$ and the following diagram commutes:
$$
\begin{array}{ccc}
   (L_{symp},h, \nabla) & \overset{\tilde{\chi}_s}{\longrightarrow } & (L_{comp},\bar{\partial}) \\
    \uparrow \tilde{\rho}_s &                                           & \uparrow \tilde{\rho}_{comp} \\
    (L_{symp}^V,h^V, \nabla^V) & \overset{\tilde{\underline{\chi}}_s}{\longrightarrow }        & (L_{comp}^V,\bar{\partial}^V).
\end{array}
$$
$(3)$ The maps $\tilde{\rho}_s$ and $\tilde{\underline{\chi}}_s$ are canonically defined and depend smoothly on $s \ge 0$.
\end{proposition}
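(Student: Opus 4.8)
The plan is to construct the line-bundle lifts by essentially repeating the argument of Proposition \ref{identify1} one level up, using the horizontal lift with respect to the connection $\nabla$ in place of the vector field $Y_s$, much as the gradient--Hamiltonian flow was lifted in Proposition \ref{liftghf}. Concretely, I would proceed as follows. Let $\tilde{\chi}_0 \colon L_{symp} \to L_{comp}$ be the bundle isomorphism from Theorem \ref{GuAb} (with $g = g_0$), and set $\tilde{\psi}_s = \tilde{\chi}_0 \circ (\tilde{\chi}_s)^{-1} \colon L_{comp} \to L_{comp}$, a lift of the map $\psi_s = \chi_0 \circ \chi_s^{-1}$ used in the proof of Proposition \ref{identify1}. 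Since both $\tilde{\chi}_0$ and $\tilde{\chi}_s$ carry $\nabla$ to $\bar{\partial}$-compatible connections with the same Chern connection $\nabla$ on $L_{symp}$, the map $\tilde{\psi}_s$ intertwines the corresponding connections on $L_{comp}$; this is the key structural fact that replaces equation (\ref{omega-s}).

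First I would produce the analogue of Claim \ref{claim1}: a bundle isomorphism $\tilde{\phi}_s \colon L_{comp}^V \to L_{comp}^V$ covering $\phi_s \colon V_{comp} \to V_{comp}$ with $\tilde{\phi}_0 = \mathrm{id}$, obtained by integrating the horizontal lift $\tilde{Y}_s \in \mathcal{X}(L_{comp}^V)$ of the vector field $Y_s$ of (\ref{Y}), where the horizontal lift is taken with respect to the pulled-back Chern connection $\rho_{comp}^*(\tilde{\chi}_0^*)^{-1}\text{-connection}$ — equivalently, with respect to a connection $\nabla_s^V$ on $L_{comp}^V$ that makes the square in Proposition \ref{identify1}(2) commute at parameter $s$. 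The point of using the horizontal lift is exactly the computation in Proposition \ref{liftghf}: a horizontal flow preserves a connection on the locus where the generating vector field is annihilated by the curvature two-form in the appropriate sense; here, because $Y_s$ is defined by the contraction equation (\ref{Y}) with the family of symplectic forms $\rho_{comp}^*\omega_s$, one gets that the transported connection is constant in $s$, hence $\tilde{\phi}_s^*\nabla_s^V = \nabla_0^V$. Then, mirroring the end of the proof of Proposition \ref{identify1}, I would define
$$
\tilde{\rho}_s = (\tilde{\chi}_s)^{-1} \circ \tilde{\rho}_{comp} \circ \tilde{\phi}_s \circ (\tilde{\chi}_0|_{L_{symp}^V}), \qquad \tilde{\underline{\chi}}_s = \tilde{\phi}_s \circ (\tilde{\chi}_0|_{L_{symp}^V}),
$$
and check that $\tilde{\rho}_s$ covers $\rho_s$, that $\tilde{\rho}_s^*(L_{symp}, h, \nabla) = (L_{symp}^V, h^V, \nabla^V)$ (using that each of $\tilde{\chi}_s$, $\tilde{\rho}_{comp}$, $\tilde{\phi}_s$, $\tilde{\chi}_0$ is an isometric, connection-preserving map on the relevant subbundle), and that the diagram commutes by construction. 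The Chern-connection statement in part (2) then follows from Theorem \ref{GuAb}(a3) restricted to $V_{symp}$ together with the fact that $\tilde{\underline{\chi}}_s^*\bar{\partial}^V$ is the holomorphic structure whose Chern connection (for the metric $h^V$) is $\nabla^V$. Part (3) is immediate: every map in the composition is canonical and depends smoothly on $s$, $\tilde{\phi}_s$ because it is the flow of the smoothly varying vector field $\tilde{Y}_s$.

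The main obstacle I anticipate is bookkeeping the compatibility of connections through the various pullbacks — in particular verifying carefully that the horizontal lift used to define $\tilde{\phi}_s$ does preserve the connection $\nabla_0^V$ in the $s$-direction. This requires the same kind of argument as in Proposition \ref{liftghf}: passing to the unit sphere bundle $S(L_{comp}^V)$, writing $d\alpha = p^*\omega_s$ for the connection form $\alpha_s$ of $\nabla_s^V$, and showing $L_{\tilde{Y}_s}\alpha_s + \tfrac{d}{ds}\alpha_s = 0$, which reduces via Cartan's formula to the defining relation (\ref{Y}) after contracting $p^*\omega_s$ with the horizontal lift. One must be slightly careful that $\nabla_s^V$ genuinely varies smoothly in $s$ (it does, being $\tilde{\rho}_{comp}$-pulled-back from the $s$-family $\tilde{\chi}_s$) and that the Hermitian metric is preserved throughout (automatic, since connections here preserve $h$, exactly as in the second sentence of the proof of Proposition \ref{liftghf}). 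Once this single lemma-style computation is in place, the rest is formal diagram-chasing.
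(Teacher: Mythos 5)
Your proposal points in the right conceptual direction but takes a genuinely different technical route from the paper, and as written it has a gap at the crucial step.

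The paper does not lift $Y_s$ horizontally on $L_{comp}^V$. Instead it forms the map $R\colon V_{symp}\times[0,\infty)\to M_{symp}$, $R(p,s)=\rho_s(p)$, pulls back the \emph{fixed} prequantum data to get $(L'_{symp},h',\nabla')=R^*(L_{symp},h,\nabla)$ on the product, and lifts $\partial/\partial s$ horizontally with respect to the \emph{fixed} connection $\nabla'$. The whole argument then rests on the identity $i(\partial/\partial s)(R^*\omega)=0$ (Claim in the paper's proof), proven from (\ref{eta}) and (\ref{Y}) together. Once you have that identity, the unit-sphere-bundle computation $L_\xi\alpha=i(\xi)\,d\alpha=p^*\{i(\partial_s)(R^*\omega)\}=0$ closes immediately, with no $s$-varying connection form, no $s$-varying Hermitian metric, and no $s$-varying sphere bundle to track.

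Your version, by contrast, works on $L_{comp}^V$ with an $s$-family of connections $\nabla_s^V$ (and, implicitly, an $s$-family of metrics $h_s^V=\tilde\rho_{comp}^*(\tilde\chi_s^{-1})^*h$, hence an $s$-moving sphere bundle). To show $\tilde\phi_s^*\nabla_s^V=\nabla_0^V$ you would need
$$
L_{\tilde Y_s}\alpha_s+\tfrac{\partial}{\partial s}\alpha_s=0.
$$
Cartan's formula and (\ref{Y}) control only the first term, giving $L_{\tilde Y_s}\alpha_s=-p^*\rho_{comp}^*\eta_s$. The second term $\partial_s\alpha_s$ encodes how the bundle isomorphisms $\tilde\chi_s$ themselves move with $s$; since $\nabla_s^V$ is the pullback of $\nabla$ through the $s$-varying $\tilde\chi_s$, you have $\nabla_s^V=\tilde\rho_{comp}^*\tilde\psi_s^*\nabla_0^{comp}$ where $\tilde\psi_s=\tilde\chi_0\circ\tilde\chi_s^{-1}$, and differentiating introduces both the expected $p^*\rho_{comp}^*\eta_s$ and an extra exact term coming from $i(\tilde X_s)\beta_0$, where $\tilde X_s$ generates $\tilde\psi_s$ and $\beta_0$ is the connection form of $\nabla_0^{comp}$. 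That term is not controlled by (\ref{Y}) alone, and since $\tilde\psi_s$ does not preserve $h_0^{comp}$ or $\nabla_0^{comp}$, there is no a priori reason it vanishes. Your phrase ``reduces via Cartan's formula to the defining relation (\ref{Y})'' therefore skips exactly the half of the cancellation that is hard, which is the part the paper's Claim packages as the single clean identity $i(\partial_s)(R^*\omega)=0$. If you want to salvage your route, you would need to prove the analogue of that identity in your setting --- i.e., show $\partial_s\alpha_s=p^*\rho_{comp}^*\eta_s$ on the nose, not merely modulo exact one-forms --- and also confront the $s$-dependence of the metric, which makes ``the unit sphere bundle'' ambiguous. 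The paper's device of pulling everything back to $V_{symp}\times[0,\infty)$ resolves both issues at once.

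The remaining parts of your proposal --- the definitions $\tilde\rho_s=\tilde\chi_s^{-1}\circ\tilde\rho_{comp}\circ\tilde\phi_s\circ\tilde\chi_0|_{L_{symp}^V}$ and $\tilde{\underline\chi}_s=\tilde\phi_s\circ\tilde\chi_0|_{L_{symp}^V}$, the deduction of the Chern-connection statement from Theorem \ref{GuAb}(a3), and the canonicity/smoothness in (3) --- are all fine once the key lemma is in place, and essentially mirror the paper's diagram chase in part (2) of its proof.
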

\begin{proof}
We use the same notation as in the proof of Proposition \ref{identify1}. \\
$(1)$ First we show the following.
\begin{claim}\label{claim2} 
Let $R \colon V_{symp} \times [0, \infty) \to M_{symp}$ be the map defined by $R(p,s)= \rho_s(p)$. 
Then the following holds. 
$$ 
i( \frac{\partial}{\partial s})(R^* \omega)=0 ~~ \text{ on $ V_{symp} \times [0, \infty) $.} 
$$
\end{claim} 
\begin{proof}[Proof of Claim \ref{claim2}.] 
Define $\theta \colon V_{comp} \times [0, \infty) \to M_{comp}$ by $\theta(p,s)= \psi_s \circ \rho_{comp} \circ \phi_s(p)$.
Fix any $ (p_0,s_0) \in V_{symp} \times [0, \infty)$ and $v \in T_{p_0}V_{symp} \subset T_{(p_0,s_0)} \{ V_{symp} \times [0, \infty) \}$. 
We set $q_0= \underline{\chi}_0(p_0) \in V_{comp}$ and $w=(\underline{\chi}_0)_{*p_0}(v) \in T_{q_0}V_{comp} \subset T_{(q_0,s_0)} \{ V_{comp} \times [0, \infty) \}$.
Since $R(p,s)=((\chi_0)^{-1} \circ \theta)(\underline{\chi}_0(p),s)$, we have 
\begin{align*}
\{ i(\frac{\partial}{\partial s})&(R^* \omega) \}_{(p_0,s_0)}(v) 
= \omega_0(\theta_{*(q_0,s_0)}(\frac{\partial}{\partial s}), \theta_{*(q_0,s_0)}(w)).
\end{align*}
By (\ref{X}) we have
\begin{align*}
\theta_{*(q_0,s_0)}(\frac{\partial}{\partial s})
&= \frac{\partial}{\partial s}\Big\vert_{s=s_0}(\psi_s \circ \rho_{comp} \circ \phi_s)(q_0) \\
&= (X_{s_0})_{\psi_{s_0} \circ \rho_{comp} \circ \phi_{s_0} (q_0)} + (\psi_{s_0})_* (\rho_{comp})_* (\frac{\partial}{\partial s}\Big\vert_{s=s_0}\phi_s(q_0)), \\
\theta_{*(q_0,s_0)}(w)
&= (\psi_{s_0})_* (\rho_{comp})_* (\phi_{s_0})_{* q_0}(w).
\end{align*}
Thus we have
\begin{align*}
& \{ i(\frac{\partial}{\partial s})(R^* \omega) \}_{(p_0,s_0)}(v) \\ 
&= \{ \psi_{s_0}^*(i(X_{s_0}) \omega_0) \}_{\rho_{comp} \circ \phi_{s_0}(q_0)} ((\rho_{comp})_* (\phi_{s_0})_{* q_0}(w)) \\
& \hspace{20mm}+ \{ (\rho_{comp})^*(\psi_{s_0})^* \omega_0) \}((Y_{s_0})_{\phi_{s_0}(q_0)},~ (\phi_{s_0})_{* q_0}(w)) \\
&= (\eta_{s_0})_{\rho_{comp} \circ \phi_{s_0}(q_0)} ((\rho_{comp})_* (\phi_{s_0})_{* q_0}(w))+(-\rho_{comp}^* \eta_{s_0})_{\phi_{s_0}(q_0)}((\phi_{s_0})_{* q_0}(w)) \\
&=0,
\end{align*}
where we used (\ref{eta}) and (\ref{Y}) in the second equality.
This implies Claim \ref{claim2}.  \end{proof}

Consider the line bundle $(L_{symp}^\prime, h^\prime, \nabla^\prime)=R^*(L_{symp},h,\nabla)$ on $V_{symp} \times [0,\infty)$.
Let $S^\prime \subset L_{symp}^\prime$ be the unit sphere bundle and $p \colon S^\prime \to V_{symp} \times [0,\infty)$ the projection.
If we denote the connection form of $\nabla^\prime$ by $\alpha \in \Omega^1(S^\prime)$, then we have $d \alpha =p^* R^* \omega$.
If we denote the horizontal lift of $\frac{\partial}{\partial s} \in \mathcal{X}(V_{symp} \times [0,\infty))$ by $\xi \in \mathcal{X}(S^\prime)$, we have $i(\xi) \alpha =0$ and $p_*\xi = \frac{\partial}{\partial s}$.
So we have 
$$ 
L_\xi \alpha= i(\xi)d \alpha 
= i(\xi)(p^* R^* \omega) =  p^* \{ i(p_*\xi) (R^* \omega) \} = p^* \{ i(\frac{\partial}{\partial s}) (R^* \omega) \} = 0.
$$
Thus the flow defined by the vector field $\xi \in \mathcal{X}(S^\prime)$ preserves the connection $\nabla^\prime$.
So it induces a lift $\tilde{\rho}_s \colon L_{symp}^V \to L_{symp}$ of the map $\rho_s \colon V_{symp} \to M_{symp}$ such that $\tilde{\rho}_s^*(h,\nabla)=(h^V,\nabla^V)$ for $s \ge 0$. 
\newline 
$(2)$ Since $\chi_s \circ \rho_s = \rho_{comp} \circ \underline{\chi}_s$ holds, $\tilde{\underline{\chi}}_s = \tilde{\rho}_{comp}^{-1} \circ \tilde{\chi}_s \circ \tilde{\rho}_s \colon L_{symp}^V \to L_{comp}^V$ is well defined.
Since $\tilde{\underline{\chi}}_s^* \overline{\partial}^V=\tilde{\underline{\chi}}_s^*(\tilde{\rho}_{comp}^* \overline{\partial})=\tilde{\rho}_s^* (\tilde{\chi}_s^* \overline{\partial})$, we see that $(L_{symp}^V, h^V, \tilde{\underline{\chi}}_s^* \overline{\partial}^V)$ is isomorphic to $\tilde{\rho}_s^* (L_{symp},h,\tilde{\chi}_s^* \overline{\partial})$.
Since $\nabla$ is the Chern connection of $(L_{symp},h,\tilde{\chi}_s^* \overline{\partial})$, $\nabla^V=\tilde{\rho}_s^* \nabla$ is the Chern connection of $(L_{symp}^V, h^V, \tilde{\underline{\chi}}_s^* \overline{\partial}^V)$.
\\
$(3)$ This is obvious from the definition of the maps $\tilde{\rho}_s$ and $\tilde{\underline{\chi}}_s$.
 \end{proof} 
\subsection{Toric subvarieties}\label{subsec-toric-subm} 
Let $(M_{symp}, \omega)$ and $(M_{comp},J)$ be a symplectic and complex toric manifold, respectively, constructed from a Delzant polytope $\Delta$ defined by (\ref{polytope}).
In this subsection we study a (possibly singular) toric subvariety $V_{comp}$ of $(M_{comp},J)$ under the deformation of toric K\"ahler structures of the ambient toric manifold. 

Fix a $\mathbb{Z}$-basis $p_1, \dots, p_n \in (\mathfrak{t}^n)^*_\mathbb{Z}$ and its dual basis $q_1, \dots, q_n \in \mathfrak{t}^n_\mathbb{Z}$.
This induces symplectic coordinate $(x,[\theta])$ on $M_{symp}^0$ as in Subsection \ref{symp-toric-mfld} and complex coordinate $w=(w_1, \dots, w_n)$ on $M_{comp}^0$ as in Subsection \ref{comp-toric-mfld}.
Note that $M_{comp}^0$ is the $T^n_{\mathbb{C}}$-orbit through $e=(1, \dots, 1) \in M_{comp}^0$. 

\begin{proposition}\label{toric-subm}
Let $T^l_{\mathbb{C}}$ be an $l$-dimensional subtorus of $T^n_{\mathbb{C}}$.
Let $\iota^* \colon (\mathfrak{t}^n)^* \to (\mathfrak{t}^l)^*$ be the dual map of the inclusion of the Lie algebra $\iota \colon \mathfrak{t}^l \to \mathfrak{t}^n$.
Let $V_{comp} \subset M_{comp}$ be a closed $l$-dimensional (possibly singular) toric subvariety containing $e=(1, \dots, 1)$. 
The torus action on $V_{comp}$ is the restriction of the $T^l_{\mathbb{C}}$-action on $M_{comp}$ and its orbit through $e$ is open dense in $V_{comp}$.
\newline
$(1)$ Let $\chi_s \colon M_{symp} \to M_{comp}$ be the diffeomorphism defined by 
$$
g_s =g_0 + s(\underline{\nu} \circ \iota^*)\in SP(\Delta)\hspace*{10mm}\text{for $s \ge 0$,} 
$$ 
where $\underline{\nu} \colon \iota ^*(\Delta) \to \mathbb{R}$ is a smooth strictly convex function. 
Set $V_{symp}=(\chi_0)^{-1}(V_{comp})$. 
Then $\chi_0 \circ \chi_s^{-1}|_{V_{comp}} \colon V_{comp} \to V_{comp}$ is a homeomorphism for each $s \ge 0$. 
\newline
$(2)$ Let $\rho_s \colon V_{symp} \to M_{symp}$ and $\underline{\chi}_s \colon V_{symp} \to V_{comp}$ be the maps constructed in Proposition \ref{identify1}. 
Then $\rho_s=\rho_0$ and $\underline{\chi}_s= \chi_s|_{V_{symp}}$ hold for $s \ge 0$. 
Moreover, their lifts constructed in Proposition \ref{identify2} are given by $\tilde{\rho}_s=\tilde{\rho}_0 \colon L_{symp}^V \to L_{symp}$ and $\tilde{\underline{\chi}}_s=\tilde{\chi}_s|_{L_{symp}^V} \colon L_{symp}^V \to L_{comp}^V$.
\end{proposition}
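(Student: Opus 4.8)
The plan is to exploit the fact that the perturbation $g_s - g_0 = s(\underline{\nu}\circ\iota^*)$ is pulled back from $(\mathfrak{t}^l)^*$ via $\iota^*$, so that the extra gradient terms in $\chi_s$ (compared to $\chi_0$) only move points in the directions tangent to the $T^l_{\mathbb{C}}$-orbits. Concretely, by (\ref{chi-g}) the composition $\psi_s := \chi_0\circ\chi_s^{-1}\colon M_{comp}\to M_{comp}$ is given in the complex coordinate $w$ by $w_i \mapsto w_i \exp\!\bigl(2\pi(\tfrac{\partial g_0}{\partial x_i}\circ\chi_s^{-1} - \tfrac{\partial g_s}{\partial x_i}\circ\chi_s^{-1})\bigr)$, i.e.\ multiplication by a function of $|w|$ alone with values in the real torus directions; more invariantly, $\psi_s$ is the time-one map of a flow whose generating vector field $X_s$ (as in (\ref{X})) is everywhere tangent to the $T^n$-orbits and, because $g_s-g_0$ is $\iota^*$-basic, in fact lies in the subalgebra corresponding to $T^l$. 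The first step is to make this precise: show that $X_s$ is a (real-)vector field generated by the $T^l$-action, hence its flow $\psi_s$ preserves every $T^l_{\mathbb{C}}$-invariant subset, in particular the toric subvariety $V_{comp}$, and acts on it as a homeomorphism. This gives (1); note we only get a homeomorphism because $V_{comp}$ may be singular, but $\psi_s$ restricted to the open dense smooth orbit $V_{comp}^0$ through $e$ is a diffeomorphism, and it extends continuously by properness/compactness.

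For (2), I would trace through the canonical construction in Proposition \ref{identify1}. There, $\phi_s\colon V_{comp}\to V_{comp}$ was obtained by integrating the vector field $Y_s$ defined by (\ref{Y}), namely $i(Y_s)(\rho_{comp}^*\omega_s) = -\rho_{comp}^*\eta_s$ where $\eta_s = \psi_s^*\{i(X_s)\omega_0\}$. The key observation is that since $\psi_s|_{V_{comp}}$ is already a symplectomorphism from $(V_{comp},\rho_{comp}^*\omega_s)$ to $(V_{comp},\rho_{comp}^*\omega_0)$ — this is exactly the content of (1) combined with (\ref{omega-s}), because $\psi_s^*\omega_0 = \omega_s$ globally on $M_{comp}$ and hence on $V_{comp}$ — the restricted one-form $\rho_{comp}^*\eta_s$ is itself $i$ of the restriction of $X_s$, which is already tangent to $V_{comp}$. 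Therefore the vector field $Y_s$ solving (\ref{Y}) is just (the restriction of) $X_s$ itself, so its flow $\phi_s$ equals $\psi_s|_{V_{comp}}$. Then by the definitions $\rho_s = \chi_s^{-1}\circ\rho_{comp}\circ\phi_s\circ\chi_0|_{V_{symp}} = \chi_s^{-1}\circ\rho_{comp}\circ\psi_s\circ\chi_0|_{V_{symp}} = \chi_s^{-1}\circ\psi_s\circ\chi_0\circ\rho_0 = \chi_s^{-1}\circ\chi_0\circ\chi_s\circ\chi_0^{-1}\circ\chi_0\circ\rho_0$; unwinding $\psi_s = \chi_0\circ\chi_s^{-1}$ carefully gives $\rho_s = \rho_0$, and $\underline{\chi}_s = \phi_s\circ\chi_0|_{V_{symp}} = \psi_s\circ\chi_0|_{V_{symp}} = \chi_0\circ\chi_s^{-1}\circ\chi_0\circ\chi_0^{-1}|_{V_{symp}}$ simplifies to $\chi_s|_{V_{symp}}$ after using $V_{symp} = \chi_0^{-1}(V_{comp})$ and $\chi_s^{-1}\circ\chi_0$ maps $V_{symp}$ into itself. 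The lifted statements $\tilde{\rho}_s = \tilde{\rho}_0$ and $\tilde{\underline{\chi}}_s = \tilde{\chi}_s|_{L^V_{symp}}$ follow by applying the horizontal-lift uniqueness argument of Proposition \ref{identify2} to the now-trivial flow on the base, or simply by the compatibility of the canonical lifts $\tilde{\chi}_s$, $\tilde{\rho}_{comp}$ with the base maps together with the commuting diagram.

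The main obstacle I expect is the bookkeeping of exactly which vector field generates $\psi_s$ and verifying that it is tangent to $V_{comp}$ even at the singular points — one should argue on the open dense orbit where everything is smooth and then invoke continuity, using that $\psi_s$ is a globally defined diffeomorphism of $M_{comp}$ that preserves the closed set $V_{comp}$. A secondary subtlety is confirming that $Y_s$ as defined by (\ref{Y}) genuinely coincides with the restriction of $X_s$: this requires that $\rho_{comp}^*\{i(X_s)\omega_0\}$ and $i(X_s|_{V_{comp}})(\rho_{comp}^*\omega_s)$ agree, which uses both $\psi_s^*\omega_0=\omega_s$ and the tangency of $X_s$, so the two facts must be combined in the right order. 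Once tangency is established the rest is formal diagram-chasing, and uniqueness in the ODE defining $\phi_s$ (no ambiguous choices, as noted in Proposition \ref{identify1}(3)) forces $\phi_s = \psi_s|_{V_{comp}}$.
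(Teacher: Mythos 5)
Your argument for part (1) is on the right track: the key point is exactly that $g_s - g_0 = s(\underline{\nu}\circ\iota^*)$ is pulled back from $(\mathfrak{t}^l)^*$, so the deformation only moves points in directions that vanish against $\ker\iota^*$. The paper makes this precise by a coordinate computation on the defining equations of $V_{comp}\cap M^0_{comp}$ (equations (\ref{V-descr}) and (\ref{V-descr2})), showing that $\sum_i\langle p,q_i\rangle\,\partial g_s/\partial x_i$ is independent of $s$ for $p\in\ker\iota^*\cap(\mathfrak{t}^n)^*_{\mathbb{Z}}$, and then deduces surjectivity by the usual $T^l$-equivariance plus open-map plus compactness argument. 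Your version, phrased in terms of the vector field $X_s$ being tangent to the $T^l$-directions, is a sound reformulation, though you would need to spell out the open/closed argument to get surjectivity on all of $V_{comp}$ rather than just $V_{comp}^0$.

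Part (2), however, contains a genuine sign error that breaks the argument. You claim that $Y_s$ as defined by (\ref{Y}) equals $X_s|_{V_{comp}}$, and hence $\phi_s = \psi_s|_{V_{comp}}$. Both statements are false: a symplectomorphism from $(V_{comp},\rho_{comp}^*\omega_0)$ to $(V_{comp},\rho_{comp}^*\omega_s)$ — which is what $\phi_s$ is required to be, by the condition $\phi_s^*(\rho_{comp}^*\omega_s)=\rho_{comp}^*\omega_0$ — must be the \emph{inverse} of $\psi_s|_{V_{comp}}$, since $\psi_s^*\omega_0=\omega_s$ says $\psi_s$ goes the other way. Tracing through carefully, $\eta_s = \psi_s^*\{i(X_s)\omega_0\}$ restricts on $V_{comp}$ to
\[
\rho_{comp}^*\eta_s \;=\; i\bigl((\psi_s^{-1})_*(X_s|_{V_{comp}^0})\bigr)\,\rho_{comp}^*\omega_s,
\]
because $\psi_s\circ\rho_{comp}=\rho_{comp}\circ(\psi_s|_{V_{comp}})$ introduces a pushforward by $(\psi_s|_{V_{comp}})^{-1}$, and $(\psi_s|_{V_{comp}})^*\rho_{comp}^*\omega_0=\rho_{comp}^*\omega_s$. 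Hence $Y_s = -(\psi_s^{-1})_*(X_s|_{V_{comp}^0})$, whose flow is $\psi_s^{-1}|_{V_{comp}} = \chi_s\circ\chi_0^{-1}|_{V_{comp}}$, not $\psi_s|_{V_{comp}}$. Your own "secondary subtlety," trying to show $\rho_{comp}^*\{i(X_s)\omega_0\}$ agrees with $i(X_s|_{V_{comp}})(\rho_{comp}^*\omega_s)$, is indeed impossible as stated, which should have signaled the problem. With the incorrect $\phi_s=\psi_s|_{V_{comp}}$, one would get $\underline{\chi}_s = \chi_0\circ\chi_s^{-1}\circ\chi_0|_{V_{symp}}$, which is not $\chi_s|_{V_{symp}}$; the "unwinding" you sketch also has an algebraic slip (you substitute $\psi_s = \chi_0\circ\chi_s\circ\chi_0^{-1}$ in one place, which is not correct) that masks the discrepancy. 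Finally, the paper's proof is careful about one point you did not address: since $V_{comp}^0$ is non-compact, one must actually verify that $Y_s$ integrates; the paper does this by exhibiting the explicit flow $\psi_{s+t}^{-1}$, which then extends continuously to all of $V_{comp}$ by part (1).
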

\begin{proof}
$(1)$ Note that $V_{comp} \cap M_{comp}^0$ is a connected component of 
\begin{equation}\label{V-descr} 
\{ w \in M_{comp}^0~|~ \prod_{i=1}^n w_i^{\langle p, q_i \rangle}=1 ~~ \text{for all $p \in \ker \iota^* \cap (\mathfrak{t}^n)^*_{\mathbb{Z}}$} \}, 
\end{equation}
which contains $e=(1, \dots, 1)$. 
By (\ref{chi-g}) we see that $(\chi_s)^{-1}(V_{comp} \cap M_{comp}^0)$ is a connected component of 
\begin{equation}\label{V-descr2} 
\{ (x, [\theta]) \in M_{symp}^0 ~|~ e^{2 \pi \sum_{i=1}^n \langle p, q_i \rangle (\frac{\partial g_s}{\partial x_i}+ \sqrt{-1} \theta_i)} =1 ~~ \text{for all $p \in \ker \iota^* \cap (\mathfrak{t}^n)^*_{\mathbb{Z}}$} \}.
\end{equation}
On the other hand, we have $$\sum_{i=1}^n \langle p, q_i \rangle \frac{\partial g_s}{\partial x_i}= \sum_{i=1}^n \langle p, q_i \rangle \frac{\partial g_0}{\partial x_i}+ s \sum_{i=1}^n \langle p, q_i \rangle \frac{\partial (\underline{\nu} \circ \iota^*)}{\partial x_i}=\sum_{i=1}^n \langle p, q_i \rangle \frac{\partial g_0}{\partial x_i},$$
because $\sum_{i=1}^n \langle p, q_i \rangle \frac{\partial}{\partial x_i}$ is a differential in the direction of $\ker \iota^*$ for all $p \in \ker \iota^* \cap (\mathfrak{t}^n)^*_{\mathbb{Z}}$. 
Therefore we have $(\chi_s)^{-1}(V_{comp}) \subset (\chi_0)^{-1}(V_{comp}) =V_{symp}$.  
So we have an injective continuous map $\chi_0 \circ \chi_s^{-1}|_{V_{symp}} \colon V_{comp} \to V_{comp}$. 

Next we show that the map $\chi_0 \circ \chi_s^{-1}|_{V_{comp}} \colon V_{comp} \to V_{comp}$ is surjective. 
Let $V_{comp}^0$ be the $T^l_\mathbb{C}$-orbit through $e$. 
Note that $\chi_0 \circ \chi_s^{-1}|_{V_{symp}}$ is $T^l$-equivariant and injective.
If we consider the isotropy subgroup at each point, we have $\chi_0 \circ \chi_s^{-1}(V_{comp}^0) \subset V_{comp}^0$ and $\chi_0 \circ \chi_s^{-1}(V_{comp} \setminus V_{comp}^0) \subset V_{comp} \setminus V_{comp}^0$. 
Since $\chi_0 \circ \chi_s^{-1}|_{V_{comp}^0}$ is a $C^\infty$-map and its differential is an isomorphism at each point, $\chi_0 \circ \chi_s^{-1}(V_{comp}^0)$ is open in $V_{comp}^0$. 
On the other hand, since $V_{comp}$ is compact, $\chi_0 \circ \chi_s^{-1}(V_{comp})$ is compact. 
Therefore we see that $\chi_0 \circ \chi_s^{-1}|_{V_{comp}^0} \colon V_{comp}^0 \to V_{comp}^0$ is surjective. 
So we see that $\chi_0 \circ \chi_s^{-1}|_{V_{comp}} \colon V_{comp} \to V_{comp}$ is surjective. 

Since $\chi_0 \circ \chi_s^{-1}|_{V_{comp}} \colon V_{comp} \to V_{comp}$ is a bijective continuous map and $V_{comp}$ is a compact Hausdorff space, it is a homeomorphism.
\newline
$(2)$ In the proof of Proposition \ref{identify1} we constructed $\phi_s \colon V_{comp} \to V_{comp}$ by integrating the time-dependent vector field $Y_s \in \mathcal{X}(V_{comp})$ defined by 
$$
i(Y_s)(\rho_{comp}^* \omega_s) + \rho_{comp}^* \eta_s =0, ~~ \text{where $
\eta_s = \psi_s^* \{ i(X_s) \omega_0 \} \in \Omega^1(M_{comp}).$}
$$
In our situation the vector field $Y_s$ is defined on $V_{comp}^0$. 
Since $V_{comp}^0$ is non-compact, it is not obvious that $Y_s$ is integrated to define the map $\phi_s |_{V_{comp}^0} \colon V_{comp}^0 \to V_{comp}^0$. 
However, we show that this holds in our case and that $\phi_s |_{V_{comp}^0}$ extends to a homeomorphism $\phi_s \colon V_{comp} \to V_{comp}$.

In the proof of Proposition \ref{toric-subm} $(1)$, we showed that $\psi_s |_{V_{comp}^0}=\chi_0 \circ \chi_s^{-1} |_{V_{comp}^0} \colon V_{comp}^0 \to V_{comp}^0$ is a diffeomorphism.
Moreover, by (\ref{X}) the restriction $X_s |_{V_{comp}^0}$ takes its values in the tangent bundle of $V_{comp}^0$.
If we note (\ref{omega-s}), we have 
\begin{align*}\rho_{comp}^* \eta_s &= \rho_{comp}^* \psi_s^* \{ i(X_s) \omega_0 \} \\
& = \rho_{comp}^* \{ i((\psi_s^{-1})_* X_s)  \psi_s^* \omega_0 \} =  i((\psi_s^{-1})_* (X_s |_{V_{comp}^0}))  \rho_{comp}^* \omega_s.
\end{align*} 
Thus we have 
$$
0=i(Y_s)(\rho_{comp}^* \omega_s) + \rho_{comp}^* \eta_s =  i(Y_s + (\psi_s^{-1})_* (X_s |_{V_{comp}^0}))  \rho_{comp}^* \omega_s.
$$
So we have 
$$
Y_s + (\psi_s^{-1})_* (X_s |_{V_{comp}^0}) =0 \in \mathcal{X}(V_{comp}^0).
$$
For any $p \in V_{comp}^0$ we have 
$$
(Y_s)_p =- \{ (\psi_s^{-1})_* (X_s |_{V_{comp}^0}) \}_p 
=-\frac{d}{dt}\Big\vert_{t=0}\psi_s^{-1} \circ \psi_{s+t}(p)= \frac{d}{dt}\Big\vert_{t=0} \psi_{s+t}^{-1} \circ \psi_s(p).
$$
Namely, we have $(Y_s)_{\psi_s^{-1}(p)}= \frac{d}{dt}\big\vert_{t=0} \psi_{s+t}^{-1}(p)$.
Thus the vector field $Y_s$ on $V_{comp}^0$ is integrated to define $\phi_s |_{V_{comp}^0} = \psi_s^{-1}|_{V_{comp}^0} = \chi_s \circ \chi_0^{-1}|_{V_{comp}^0}$.
So $\phi_s |_{V_{comp}^0}$ is extended to a homeomorphism $\phi_s = \chi_s \circ \chi_0^{-1}|_{V_{comp}} \colon V_{comp} \to V_{comp}$. 
So we have $\underline{\chi}_s = \phi_s \circ \chi_0|_{V_{symp}} = \chi_s |_{V_{symp}}$. 
Then the rest of the statement is obvious. 
\end{proof} 

Recall that we defined a holomorphic section $\sigma^m$ of  $L_{comp}$ for $m \in \Delta \cap (\mathfrak{t}^n)^*_{\mathbb{Z}}$ by (\ref{def-sigma-m-0}). 
Then $\tilde{\chi}_s^* \sigma^m$ is a section of $L_{symp}$.
By Proposition \ref{toric-subm} $(2)$, the section $\tilde{\rho}_s^*(\tilde{\chi}_s^* \sigma^m)=\tilde{\rho}_0^*(\tilde{\chi}_s^* \sigma^m)$ of $L_{symp}^V$ can be written as $\tilde{\chi}_s^*\sigma^m |_{V_{symp}}$. 
\begin{proposition}\label{sigma-m}
In addition to the assumptions in Proposition \ref{toric-subm}, suppose that $\iota^*((\mathfrak{t}^n)^*_{\mathbb{Z}})=(\mathfrak{t}^l)^*_{\mathbb{Z}}$. 
Set $\mu_{T^l}= \iota^* \circ \mu_{T^n} \colon M_{symp} \to (\mathfrak{t}^l)^*$ and $\Delta_V = \mu_{T^l}(V_{symp})$.
\\
$(1)$ For $m, m^\prime \in \Delta \cap (\mathfrak{t}^n)^*_{\mathbb{Z}}$, $\sigma^m |_{V_{comp}} = \sigma^{m^\prime} |_{V_{comp}}$ if $\iota^* m = \iota^* m^\prime$.  
\\
$(2)$ For $p \in \mathrm{Int}\Delta_V$, $\mu_{T^l}^{-1}(p) \cap V_{symp}$ is a Bohr-Sommerfeld fiber for the prequantum line bundle $(L_{symp},h,\nabla)|_{V_{symp}}$ if and only if $p \in \mathrm{Int}\Delta_V \cap (\mathfrak{t}^l)^*_{\mathbb{Z}}$.
\\
$(3)$ Fix any $m \in \Delta \cap (\mathfrak{t}^n)^*_{\mathbb{Z}}$ with $\iota^* m \in \mathrm{Int}\Delta_V \cap (\mathfrak{t}^l)^*_{\mathbb{Z}}$. 
Let $B_{\iota^* m}$ be an open neighborhood of $\iota^* m$ in $(\mathfrak{t}^l)^*$. 
Then there exists $C_0(s) >0$, depending continuously on $s \ge 0$, such that $\lim_{s \to \infty}C_0(s)=0$ and, for arbitrary $s \ge 0$, 
$$
\Vert \tau^m_s \Vert_{C^0(M_{symp} \setminus \mu_{T^l}^{-1}(B_{\iota^* m}))} \le C_0(s)~~\text{where $\tau^m_s=\frac{\tilde{\chi}_s^* \sigma^m}
      {\Vert \tilde{\chi}_s^*\sigma^m |_{V_{symp}} \Vert_{L^1(V_{symp})}}$.}
$$
\\
$(4)$ Fix $m \in \Delta \cap (\mathfrak{t}^n)^*_{\mathbb{Z}}$ with $\iota^* m \in \mathrm{Int}\Delta_V \cap (\mathfrak{t}^l)^*_{\mathbb{Z}}$.
The section $\tau^m_s |_{V_{symp}}$ converges to a delta-function section supported on the Bohr-Sommerfeld fiber $\mu_{T^l}^{-1}(\iota^* m) \cap V_{symp} $ in the following sense:
there exist a covariantly constant section $\delta_{\iota^* m}$ of $(L_{symp},h,\nabla)|_{\mu_{T^l}^{-1}(\iota^* m)}$ and a measure $d \theta_{\iota^* m}$ on $\mu_{T^l}^{-1}(\iota^* m) \cap V_{symp}$ such that, for any smooth section $\phi$ of the dual line bundle $(L_{symp}^V)^*$, the following holds
$$
\lim_{s \to \infty} \int_{V_{symp}} \left\langle \phi, \tau^m_s |_{V_{symp}} \right\rangle 
   \frac{(\rho_0^*\omega)^l}{l !} 
= \int_{\mu_{T^l}^{-1}(\iota^* m) \cap V_{symp}} \langle \phi , \delta_{\iota^* m} \rangle d \theta_{\iota^* m}.
$$
\end{proposition}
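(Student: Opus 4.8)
The plan is to restrict the whole picture to the open dense orbit $V_{symp}^0=\chi_0^{-1}(V_{comp}^0)$ and observe that there the identification $\chi_s|_{V_{symp}}$ is precisely one of the type studied in \cite{BFMN}, driven by the strictly convex function $\underline\nu$; the complement $V_{symp}\setminus V_{symp}^0$ will be controlled by compactness and by the $C^0$ estimate in $(3)$. Using $\iota^*((\mathfrak t^n)^*_{\mathbb Z})=(\mathfrak t^l)^*_{\mathbb Z}$ I would first choose the $\mathbb Z$-basis $p_1,\dots,p_n$ adapted to $\iota^*$, so that $\bar p_i:=\iota^*p_i$ ($i\le l$) is a $\mathbb Z$-basis of $(\mathfrak t^l)^*_{\mathbb Z}$ and $p_{l+1},\dots,p_n$ span $\ker\iota^*\cap(\mathfrak t^n)^*_{\mathbb Z}$. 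Statement $(1)$ then follows at once from $(\ref{def-sigma-m})$ and the description $(\ref{V-descr})$ of $V_{comp}^0$: if $\iota^*m=\iota^*m'$ then $m-m'\in\ker\iota^*\cap(\mathfrak t^n)^*_{\mathbb Z}$, so $\prod_i w_i^{\langle m-m',q_i\rangle}\equiv 1$ on $V_{comp}^0$, whence $\sigma^m=\sigma^{m'}$ there and, by density, on $V_{comp}$. By $(\ref{chi-g})$ and $(\ref{V-descr2})$ one gets $V_{symp}^0=\{(x,[\theta])\in M_{symp}^0\mid \partial g_0/\partial x_k=0,\ \theta_k\equiv 0\ (k>l)\}$; eliminating $x_{l+1},\dots,x_n$ through $\partial g_0/\partial x_k=0$ gives coordinates $(\bar x,[\bar\theta])\in\mathrm{Int}\Delta_V\times T^l$ in which $\rho_0^*\omega=\sum_{i=1}^l dx_i\wedge d\theta_i$ and, by Lemma $\ref{cansymp}$, $\nabla|_{V_{symp}^0}=d-2\pi\sqrt{-1}\sum_{i=1}^l x_i\,d\theta_i$ in the trivialisation $s_{symp}^0$. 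For $p=\sum m_i\bar p_i\in\mathrm{Int}\Delta_V$ the fiber $\mu_{T^l}^{-1}(p)\cap V_{symp}$ lies in $V_{symp}^0$ and is the single $l$-torus $\{\bar x=(m_1,\dots,m_l)\}\times T^l$; on it the connection is $d-2\pi\sqrt{-1}\sum m_i\,d\theta_i$, so $e^{2\pi\sqrt{-1}\sum m_i\theta_i}s_{symp}^0$ is covariantly constant and single-valued iff every $m_i\in\mathbb Z$, i.e. iff $p\in(\mathfrak t^l)^*_{\mathbb Z}$; this gives $(2)$.

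For $(3)$ and $(4)$ I would compute from $(a5)$ of Theorem $\ref{GuAb}$ and $(\ref{chi-g})$ that on $M_{symp}^0$
$$
\tilde\chi_s^*\sigma^m=\exp\!\Big(2\pi\big[\,\varphi_s^m(x)+\sqrt{-1}\sum_{i=1}^n m_i\theta_i\,\big]\Big)\,s_{symp}^0,\qquad
\varphi_s^m(x)=g_s(x)+\sum_{i=1}^n(m_i-x_i)\,\frac{\partial g_s}{\partial x_i}(x),
$$
where $m=\sum m_ip_i$; thus $\varphi_s^m(x)$ is the value at $m$ of the affine function tangent to the convex function $g_s$ at $x$, so $\varphi_s^m(x)\le g_s(m)=g_0(m)+s\underline\nu(\iota^*m)$, and splitting $g_s=g_0+s(\underline\nu\circ\iota^*)$ sharpens this to $g_s(m)-\varphi_s^m(x)\ge s\,D(\iota^*m,\iota^*x)$ with $D(\bar m,\bar x):=\underline\nu(\bar m)-\underline\nu(\bar x)-\langle d\underline\nu(\bar x),\bar m-\bar x\rangle\ge 0$, which by strict convexity of $\underline\nu$ and compactness of $\iota^*(\Delta)$ is bounded below by some $\delta>0$ on $\iota^*(\Delta)\setminus B_{\iota^*m}$. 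Restricting to $V_{symp}^0$, the terms with index $k>l$ drop out (since $\partial g_s/\partial x_k=0$ there), and one obtains $\varphi_s^m|_{V_{symp}^0}(\bar x)=\Phi_s^{\bar m}(\bar x):=G_s(\bar x)+\sum_{i=1}^l(\bar m_i-\bar x_i)\,\partial G_s/\partial\bar x_i(\bar x)$ for $G_s(\bar x):=g_0(x(\bar x))+s\underline\nu(\bar x)=G_0+s\underline\nu$, so $|\tilde\chi_s^*\sigma^m|_h=e^{2\pi\Phi_s^{\bar m}}$ on $V_{symp}^0$ and $\|\tilde\chi_s^*\sigma^m|_{V_{symp}}\|_{L^1(V_{symp})}=\mathrm{vol}(T^l)\int_{\Delta_V}e^{2\pi\Phi_s^{\bar m}(\bar x)}\,d\bar x$. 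A Laplace-type lower bound — integrate over a ball of radius $s^{-1/2}$ around the maximum $\bar x=\bar m$, where $\mathrm{Hess}\,G_s\sim s\,\mathrm{Hess}\,\underline\nu$ — gives $\int_{\Delta_V}e^{2\pi\Phi_s^{\bar m}}\ge c_1s^{-l/2}e^{2\pi G_s(\bar m)}$ for large $s$. Since $g_s(m)-G_s(\bar m)=g_0(m)-G_0(\bar m)$ is independent of $s$, dividing $|\tilde\chi_s^*\sigma^m|_h\le e^{2\pi(g_s(m)-s\delta)}$ (valid on $M_{symp}\setminus\mu_{T^l}^{-1}(B_{\iota^*m})$) by this bound yields $(3)$ with $C_0(s)=O\big(s^{l/2}e^{-2\pi s\delta}\big)\to 0$ for large $s$; on a bounded $s$-interval one extends $C_0$ using continuity of $s\mapsto\|\tau_s^m\|_{C^0}$, which follows from the smooth dependence on $s$ in Propositions $\ref{identify1}$--$\ref{identify2}$ together with $\sigma^m|_{V_{comp}}\not\equiv 0$.

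For $(4)$, for a smooth section $\phi$ of $(L_{symp}^V)^*$ the integrand $\langle\phi,\tau_s^m|_{V_{symp}}\rangle$ equals, on $V_{symp}^0$, $\dfrac{e^{2\pi\Phi_s^{\bar m}(\bar x)}}{\|\tilde\chi_s^*\sigma^m|_{V_{symp}}\|_{L^1(V_{symp})}}\,e^{2\pi\sqrt{-1}\sum m_i\theta_i}\langle\phi,s_{symp}^0\rangle$; since $V_{symp}\setminus V_{symp}^0$ is a null set, integrating first over $[\bar\theta]$ turns the whole integral into $\dfrac{\int_{\Delta_V}\Psi(\bar x)\,e^{2\pi\Phi_s^{\bar m}(\bar x)}\,d\bar x}{\mathrm{vol}(T^l)\int_{\Delta_V}e^{2\pi\Phi_s^{\bar m}(\bar x)}\,d\bar x}$ for a bounded smooth $\Psi$. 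The upper and lower bounds above show that $e^{2\pi\Phi_s^{\bar m}}d\bar x/\!\int e^{2\pi\Phi_s^{\bar m}}d\bar x$ concentrates at $\bar m$ as $s\to\infty$, so the ratio tends to $\Psi(\bar m)/\mathrm{vol}(T^l)$, which is exactly $\int_{\mu_{T^l}^{-1}(\iota^*m)\cap V_{symp}}\langle\phi,\delta_{\iota^*m}\rangle\,d\theta_{\iota^*m}$ with $\delta_{\iota^*m}=e^{2\pi\sqrt{-1}\sum m_i\theta_i}s_{symp}^0|_{\mu_{T^l}^{-1}(\iota^*m)\cap V_{symp}}$ the covariantly constant section of $(2)$ and $d\theta_{\iota^*m}$ the normalised Haar measure on the fiber.

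The step I expect to be the real obstacle is this passage over the non-open part of $V_{symp}$: because $V_{symp}$ is in general singular one cannot simply quote Proposition $\ref{thm:bfmn}$, so one must check by hand (i) that the deformation, which a priori lives on the ambient $M_{symp}$, restricts to $V_{symp}$ as a genuine \cite{BFMN}-deformation on the smooth orbit $V_{symp}^0$ — this is exactly where Proposition $\ref{toric-subm}$ and the identity $\varphi_s^m|_{V_{symp}^0}=\Phi_s^{\bar m}$ enter — and (ii) that the complement $V_{symp}\setminus V_{symp}^0$ contributes nothing, being of measure zero and, by $(3)$, carrying asymptotically no mass. The remaining estimates are the Laplace-type analysis already present in \cite{BFMN}.
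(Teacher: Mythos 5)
Your proposal is correct and follows essentially the same strategy as the paper: choose a $\mathbb{Z}$-basis $p'_1,\dots,p'_n$ adapted to $\iota^*$, use the descriptions (\ref{V-descr})--(\ref{V-descr2}) to reduce everything on $V_{symp}^0$ to an $l$-dimensional problem in the coordinates $(\bar x,[\bar\theta])$, and then exploit strict convexity of $\underline\nu$ to obtain the quadratic growth that drives both the $C^0$ decay in $(3)$ and the concentration in $(4)$. The only cosmetic differences are that the paper phrases the convexity estimate as a Taylor expansion with integral Hessian remainder (rather than your Bregman-divergence formulation, which is equivalent) and uses a fixed small $r$ in the lower bound for $\Vert\tilde\chi_s^*\sigma^m|_{V_{symp}}\Vert_{L^1}$, giving a bound valid for all $s\ge 0$ at once, whereas your Laplace bound at scale $s^{-1/2}$ requires the separate continuity argument on bounded $s$-intervals which you correctly supply.
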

\begin{proof}
$(1)$ By (\ref{def-sigma-m}) we have $\sigma^{m^\prime}/\sigma^m = \prod_{i=1}^n w_i^{\langle m^\prime -m, q_i \rangle}$ on $M_{comp}^0$. 
Since $m^\prime -m \in \ker \iota^*$, due to (\ref{V-descr}), we have $\sigma^{m^\prime}/\sigma^m = 1$ on $V_{comp}$. 
\\
$(2)$ Since $\iota^*((\mathfrak{t}^n)^*_{\mathbb{Z}})=(\mathfrak{t}^l)^*_{\mathbb{Z}}$, we can take $p_1^\prime, \dots, p_l^\prime \in (\mathfrak{t}^n)^*_{\mathbb{Z}}$ so that $\iota^* p_1^\prime, \dots, \iota^* p_l^\prime$ is a $\mathbb{Z}$-basis of $(\mathfrak{t}^l)^*_{\mathbb{Z}}$. 
In addition, if we fix a $\mathbb{Z}$-basis $p_{l+1}^\prime, \dots, p_n^\prime$ of $(\ker \iota^*) \cap (\mathfrak{t}^l)^*_{\mathbb{Z}}$, then $p_1^\prime, \dots, p_n^\prime$ is a $\mathbb{Z}$-basis of $(\mathfrak{t}^n)^*_{\mathbb{Z}}$. 
It induces the complex coordinate $w^\prime=(w_1^\prime, \dots, w_n^\prime)$ on $M_{comp}^0$ and the symplectic coordinate $(x^\prime, [\theta^\prime])$ on $M_{symp}^0$ as in the previous subsections.

By (\ref{V-descr}) we have $V_{comp} \cap M_{comp}^0 = \{ w^\prime \in M_{comp}^0~|~ w_{l+1}^\prime= \dots =w_n^\prime=1 \}$.
So, by (\ref{V-descr2}), we see that $\theta_{l+1}^\prime, \dots, \theta_n^\prime$ are constant on $V_{symp} \cap M_{symp}^0$. 
Moreover, by (\ref{mm}) we have $\mu_{T^l}(x^\prime, [\theta^\prime])= \sum_{i=1}^lx_i^\prime p_i^\prime$ for $(x^\prime, [\theta^\prime]) \in M_{symp}^0$. 
For each $p= \sum_{i=1}^lx_i^\prime p_i^\prime \in \mathrm{Int}\Delta_V$, since $\mu_{T^l}^{-1}(p) \cap V_{symp}$ is a single $T^l$-orbit, $x_{1}^\prime, \dots, x_n^\prime$ are also constant on $\mu_{T^l}^{-1}(p) \cap V_{symp}$. 

On the other hand, due to Lemma \ref{cansymp}, we see that $\nabla|_{M_{symp}^0}= d- 2 \pi \sqrt{-1} \sum_{i=1}^n x_i^\prime d \theta_i^\prime$ with respect to the trivialization defined by $s^0_{symp}$. 
Therefore, for a fixed $p= \sum _{i=1}^l x_i^\prime \iota^*p_i^\prime \in \mathrm{Int}\Delta_V$, the multi-valued section $\delta_p([\theta^\prime])=e^{2 \pi \sqrt{-1}\sum_{i=1}^l x_i^\prime \theta_i^\prime}s^0_{symp}$ of  $(L_{symp},h,\nabla)|_{\mu_{T^l}^{-1}(p) \cap V_{symp}}$ is covariantly constant.
Since $\delta_p$ is single-valued if and only if $p \in \mathrm{Int}\Delta_V \cap (\mathfrak{t}^l)^*_{\mathbb{Z}}$, we finish the proof. 
\\
$(3)$ The following proof is a slight modification of the argument in \cite{BFMN}.
If we write $m =\sum_{i=1}^n m_i^\prime p_i^\prime \in (\mathfrak{t}^n)^*_{\mathbb{Z}}$, due to (\ref{def-sigma-m}) and Theorem \ref{GuAb}, we have
\begin{align*}
\tilde{\chi_s}^* \sigma^m & = \tilde{\chi_s}^* \{ (\prod_{i=1}^n (w_i^\prime)^{m_i^\prime}) s_{comp}^0 \} \\
& = \lbrace \prod_{i=1}^n e^{2\pi m_i^\prime (\frac{\partial g_s}{\partial x_i^\prime}+ \sqrt{-1} \theta_i^\prime)} \rbrace e^{2\pi (g_s-\sum_{i=1}^n x_i^\prime \frac{\partial g_s}{\partial x_i^\prime} )}s_{symp}^0 \\
& =  e^{2 \pi (g_s - \sum_{i=1}^n (x_i^\prime -m_i^\prime)\frac{\partial g_s}{\partial x_i^\prime})} e^{2 \pi \sqrt{-1} (\sum_{i=1 }^n m_i^\prime \theta_i^\prime)} s_{symp}^0 \\
& =  e^{-2 \pi s \alpha_m (x^\prime)} \varsigma^m,
\end{align*}
where
\begin{align*}
& \varsigma^m(x^\prime, [\theta^\prime]) = e^{2 \pi (g_0 - \sum_{i=1}^n (x_i^\prime -m_i^\prime)\frac{\partial g_0}{\partial x_i^\prime})}e^{2 \pi \sqrt{-1} (\sum_{i=1 }^n m_i^\prime \theta_i^\prime)} s_{symp}^0(x^\prime, [\theta^\prime]), \\
& \alpha_m(x^\prime) =\sum_{i=1}^n (x_i^\prime -m_i^\prime)\frac{\partial (\underline{\nu} \circ \iota^*)}{\partial x_i^\prime}(x^\prime)-(\underline{\nu} \circ \iota^*)(x^\prime) .
\end{align*}
If we set $\underline{\alpha}_{\iota^* m}(p)=\sum_{i=1}^l (x_i^\prime -m_i^\prime)\frac{\partial \underline{\nu}}{\partial x_i^\prime}(p)-\underline{\nu}(p)$ for $p=\sum_{i=1}^l x_i^\prime \iota^* p_i^\prime \in \iota^* \Delta \subset (\mathfrak{t}^l)^*$, then we have $\alpha_m(x^\prime) =\underline{\alpha}_{\iota^* m}\circ \mu_{T^l}(x^\prime, [\theta^\prime])$.
As in the argument in Section 4 in \cite{BFMN}, we have 
\begin{align*}
\underline{\alpha}_{\iota^* m}(p)
&= \underline{\alpha}_{\iota^* m}(\iota^* m)+ \int_0^1 \frac{d}{dt} \underline{\alpha}_{\iota^* m}(\iota^* m + t(p-\iota^* m))dt \\
&= - \underline{\nu}(\iota^* m)+ \int_0^1 {^t(p-\iota^* m)}(\mathrm{Hess}_{\iota^* m + t(p-\iota^* m)}\underline{\nu})(p-\iota^* m) dt. 
\end{align*}
Since $\underline{\nu} \colon \iota ^*(\Delta) \to \mathbb{R}$ is strictly convex and $\iota^* \Delta$ is compact, if we put $\Vert p \Vert^2 = \sum_{i=1}^l (x_i^\prime)^2$ for $p=\sum_{i=1}^l x_i^\prime p_i^\prime= \iota^* x^\prime \in (\mathfrak{t}^l)^*$, there exist $C_1, C_2 >0$ such that 
$$
 - \underline{\nu}(\iota^* m)+ C_1 \Vert p-\iota^* m \Vert^2 \le \underline{\alpha}_{\iota^* m}(p) \le - \underline{\nu}(\iota^* m)+ C_2 \Vert p-\iota^* m \Vert^2~~~~~~\text{for $p \in \iota^* \Delta $.}
$$
So we have 
\begin{equation}\label{u}
e^{-s \underline{\alpha}_{\iota^* m}(p)} \le e^{s \underline{\nu}(\iota^* m)-s C_1 \Vert p-\iota^* m \Vert^2}~~~~~~\text{for $p \in \iota^* \Delta$.} 
\end{equation}
On the other hand, there exists $C_3 >0$, for sufficiently small $r >0$,  
$$
\int_{\Delta_V}e^{-s \underline{\alpha}_{\iota^* m}(p)}dp \ge \int_{B_r(\iota^* m) \cap \Delta_V}e^{s \underline{\nu}(\iota^* m)-s C_2 \Vert p-\iota^* m \Vert^2}dp  \ge C_3 r^l e^{s \underline{\nu}(\iota^* m)-s C_2 r^2}
$$
Since $\alpha_m=\underline{\alpha}_{\iota^* m}\circ \mu_{T^l}$ is a smooth function on $M_{symp}$, $\varsigma^m$ is a smooth section of $L_{symp}$.
Since $\varsigma^m$ is non-zero on $\mu_{T^l}^{-1}(\iota^* m)$ and independent of $s \ge 0$, there exists $C_4 >0$ such that 
\begin{equation}\label{l}
\Vert \tilde{\chi}_s^*\sigma^m |_{V_{symp}} \Vert_{L^1(V_{symp})} \ge C_4 r^l e^{s \underline{\nu}(\iota^* m)-s C_2 r^2}
\end{equation}
By (\ref{u}) and (\ref{l}) there exists $C_5 >0$ such that 
\begin{align*} 
\left\vert \tau^m_s (x^\prime,[\theta^\prime]) \right\vert & =
\left\vert \frac{\tilde{\chi}_s^* \sigma^m (x^\prime,[\theta^\prime])}
      {\Vert \tilde{\chi}_s^*\sigma^m |_{V_{symp}} \Vert_{L^1(V_{symp})}} \right\vert \\
& \le C_5 \frac{e^{s \underline{\nu}(\iota^* m)-s C_1 \Vert \iota^* x^\prime -\iota^* m \Vert^2}}{r^l e^{s \underline{\nu}(\iota^* m)-s C_2 r^2}} = C_5 r ^{-l} e^{-s(C_1 \Vert \iota^* x^\prime -\iota^* m \Vert^2-C_2 r^2)}.
\end{align*}
Since we can take small $r>0$ so that $C_1 \Vert p -\iota^* m \Vert^2-C_2 r^2 >0$ for any $p \in \iota^* \Delta \setminus B_{\iota^* m}$, we finish the proof. 
\\
$(4)$ By the above argument, we also have 
$$
\lim_{s \to \infty} \frac{e^{-2 \pi s \underline{\alpha}_{\iota^* m}(p)}}{ \Vert e^{-2 \pi s \underline{\alpha}_{\iota^* m}} \Vert_{L^1(\Delta_V)} } = \delta(p- \iota^* m)
$$ for $\iota^* m \in \mathrm{Int}\Delta_V \cap (\mathfrak{t}^l)^*_{\mathbb{Z}}$, where $\delta(x)$ is the Dirac delta function on $(\mathfrak{t}^l)^*$ supported at the origin.  
Moreover, the restriction $\varsigma^m|_{\mu_{T^l}^{-1}(\iota^* m) \cap V_{symp}}=c(e^{2 \pi \sqrt{-1} (\sum_{i=1 }^l m_i^\prime \theta_i^\prime)} s_{symp}^0)|_{\mu_{T^l}^{-1}(\iota^* m) \cap V_{symp}}$, where $c$ is a constant, is a covariantly constant section on $\mu_{T^l}^{-1}(\iota^* m) \cap V_{symp}$, which we denote by $\delta_{\iota^* m}(\theta^\prime)$.
So the assertion is proved easily. 
The details are the same as in \cite{BFMN}. 
\end{proof} 
\section{Proof of main result}

In this section we prove Theorem~\ref{mainth} by applying the method developed in Section \ref{sec-identify}. 
In Subsection \ref{setup} we explain how the setting of Theorem \ref{mainth} fits into the framework of Section \ref{sec-identify}. 
In Subsection \ref{complex-constr}, we construct a family of complex structures on the flag manifold, from which $(1)$, $(2)$ and $(3)$ of Theorem \ref{mainth} turn out to be obvious. 
Finally, we prove Theorem \ref{mainth} $(4)$ in Subsections \ref{est-gh} and \ref{conv}. 

\subsection{Set up}\label{setup}

In Section \ref{main} we fixed a symplectic structure $\omega_{\mathbb{P}}$ on $\mathbb{P}=\prod_{l=1}^{n-1} \mathbb{P}(\bigwedge^l \mathbb{C}^n)$. 
We denote the complex structure on $\mathbb{P}$ by $J_{\mathbb{P}}$. 
Note that $(\mathbb{P}, \omega_{\mathbb{P}}, J_{\mathbb{P}})$ is a toric K\"ahler manifold, constructed from a Delzant polytope $\Delta_{\mathbb{P}}$.  
Moreover, the toric K\"ahler manifold $(\mathbb{P}, \omega_{\mathbb{P}}, J_{\mathbb{P}})$ can be viewed as the identification of a symplectic toric manifold $(\mathbb{P}_{symp}, \omega_{\mathbb{P}})$ with a complex toric manifold $(\mathbb{P}_{comp}, J_{\mathbb{P}})$ by the diffeomorphism $\chi_0 \colon (\mathbb{P}_{symp}, \omega_{\mathbb{P}}) \to (\mathbb{P}_{comp}, J_{\mathbb{P}})$ defined by a symplectic potential $g_0 \in SP(\Delta_{\mathbb{P}})$, as in Subsection~\ref{symp-potl}.
Similarly, the Hermitian line bundle $(L^{\mathbb{P}},h^{\mathbb{P}},\nabla^{\mathbb{P}})$ can also be viewed as the identification of the prequantum line bundle $(L_{symp}^{\mathbb{P}},h^{\mathbb{P}},\nabla^{\mathbb{P}})$ on $(\mathbb{P}_{symp}, \omega_{\mathbb{P}})$ with the holomorphic line bundle $(L_{comp}^{\mathbb{P}}, \overline{\partial}^{\mathbb{P}})$ on $(\mathbb{P}_{comp}, J_{\mathbb{P}})$ via the bundle isomorphism $\tilde{\chi_0}$, which is a lift of the map $\chi_0 \colon \mathbb{P}_{symp} \to \mathbb{P}_{comp}$.

The flag manifold $(\mathbb{F}, \omega_{\mathbb{F}}, J_{\mathbb{F}})$ in Theorem \ref{mainth} can also be viewed as the identification of $(\mathbb{F}_{symp}, \omega_{\mathbb{F}})$ with $(\mathbb{F}_{comp}, J_{\mathbb{F}})$ as follows.
Let us denote the Pl\"ucker embedding by $\rho_{comp} \colon (\mathbb{F}_{comp}, J_{\mathbb{F}}) \to (\mathbb{P}_{comp}, J_{\mathbb{P}})$.
We set $\mathbb{F}_{symp}=\chi_0^{-1}(\mathbb{F}_{comp})$ and let $\rho_{symp} \colon \mathbb{F}_{symp} \to \mathbb{P}_{symp}$ be the embedding. 
Note that $\rho_{symp}^* \omega_{\mathbb{P}}=\omega_{\mathbb{F}}$.
We also set $(L_{symp}^{\mathbb{F}}, h^{\mathbb{F}}, \nabla^{\mathbb{F}})= \rho_{symp}^* (L_{symp}^{\mathbb{P}},h^{\mathbb{P}},\nabla^{\mathbb{P}})$ and $(L_{comp}^{\mathbb{F}}, \overline{\partial}^{\mathbb{F}})= \rho_{comp}^* (L_{comp}^{\mathbb{P}}, \overline{\partial}^{\mathbb{P}})$.
Then we have the following commutative diagrams:
\begin{equation}\label{initial} 
\begin{array}{ccccccc} 
(\mathbb{P}_{symp}, \omega_{\mathbb{P}}) \!\!\! & \overset{\chi_0}{\longrightarrow } \!\!\! & (\mathbb{P}_{comp}, J_{\mathbb{P}}) & \qquad & (L_{symp}^{\mathbb{P}},h^{\mathbb{P}},\nabla^{\mathbb{P}}) \!\!\! & \overset{\tilde{\chi}_0}{\longrightarrow } \!\!\! & (L_{comp}^{\mathbb{P}}, \overline{\partial}^{\mathbb{P}}) \\
\uparrow \rho_{symp} \!\!\! & \!\!\! & \uparrow \rho_{comp} & & \uparrow \tilde{\rho}_{symp} \!\!\! & \!\!\! & \uparrow \tilde{\rho}_{comp} \\
(\mathbb{F}_{symp}, \omega_{\mathbb{F}}) \!\!\! & \overset{\chi_0 |_{\mathbb{F}_{symp}}}{\longrightarrow } \!\!\! & (\mathbb{F}_{comp}, J_{\mathbb{F}}) & & (L_{symp}^{\mathbb{F}}, h^{\mathbb{F}}, \nabla^{\mathbb{F}}) \!\!\! & \overset{\tilde{\chi}_0 |_{L^\mathbb{F}_{symp}}}{\longrightarrow } \!\!\! & (L_{comp}^{\mathbb{F}}, \overline{\partial}^{\mathbb{F}}),
\end{array} 
\end{equation} 
where $\tilde{\rho}_{symp}$ and $\tilde{\rho}_{comp}$ are the natural embeddings.
 
In Subsection \ref{tg} we constructed a family of varieties $\{ Fl_n(t)=M_n(\mathbb{C})/\!/_t B \}_{t \in \mathbb{C}}$. 
We set $(V_{t, comp}, J_{V_t})=Fl_n(t)$ for $t \in [0,1]$ and denote the deformed Pl\"ucker embedding by $\rho_{t,comp} \colon V_{t, comp} \to \mathbb{P}_{comp}$, which is defined in Subsection \ref{tg}. 
Let $V_{t, symp}=\chi_0^{-1}(V_{t, comp})$ and $\rho_{t,0} \colon V_{t, symp} \to \mathbb{P}_{symp}$ be the embedding.  
We also set $(L_{symp}^{V_t}, h^{V_t}, \nabla^{V_t})= \rho_{t,0}^* (L_{symp}^{\mathbb{P}},h^{\mathbb{P}},\nabla^{\mathbb{P}})$ and $(L_{comp}^{V_t}, \overline{\partial}^{V_t})= \rho_{t, comp}^* (L_{comp}^{\mathbb{P}}, \overline{\partial}^{\mathbb{P}})$. 
Then we have a commutative diagram, which is the case $s=0$ in the diagram (\ref{deform}) below.
Note that $V_{1,comp}=\mathbb{F}_{comp}$ and $V_{1,symp}=\mathbb{F}_{symp}$ and that $V_{0,comp}$ is the Gelfand-Cetlin toric variety $Fl_n(0) \subset \mathbb{P}$.  
Thus the above family $\{ V_{t,comp} \}_{t \in [0,1]}$ connects the flag manifold $\mathbb{F}_{comp}$ with the Gelfand-Cetlin toric variety $Fl_n(0)$.

For any $t \in [0,1]$, fix a path $\gamma_t \colon [0,1] \to \mathbb{C}^{n-1}$, which is given by straight lines connecting the points:  
\begin{align*} 
\gamma_t (0)=(1, \dots, 1) \to (1, \dots, 1, t) \to (1, \dots, 1, t, t) \to \dots \to (t, \dots, t)=\gamma_t (1).
\end{align*}
Recall that we constructed a family of varieties $\{ Fl_n(\tau)=M_n(\mathbb{C})/\!/_\tau B \}$ for $\tau \in (\mathbb{C}^\times)^{n-1}$ in Subsection~\ref{stage} and that we also constructed a degeneration in stages by extending the family.
The path $\gamma_t$ is an approximation to the path $\gamma_0$ for the degeneration in stages. 
Note that $Fl_n(\gamma_t (1))=(V_{t, comp},J_{V_t})$ for $t \in [0,1]$.
Due to Propositions \ref{ghf} and \ref{liftghf}, the gradient-Hamiltonian flow along the path $\gamma_t$ for $t \in (0,1]$ gives a symplectic diffeomorphism and its lift as in the following diagram: 
\begin{equation}\label{ghflow}
\begin{array}{ccc}
(L_{symp}^{\mathbb{F}}, h^{\mathbb{F}}, \nabla^{\mathbb{F}}) & \overset{\tilde{\Psi}_t}{\longrightarrow } & (L_{symp}^{V_t}, h^{V_t}, \nabla^{V_t}) \\ 
\downarrow  & & \downarrow  \\
(\mathbb{F}_{symp}, \rho_{symp}^* \omega_{\mathbb{P}}) & \overset{\Psi_t}{\longrightarrow } & (V_{t,symp}, \rho_{t,0}^* \omega_{\mathbb{P}}).
\end{array}
\end{equation}

We can also extend $\Psi_t \colon \mathbb{F}_{symp} \to V_{t,symp}$ in (\ref{ghflow}) to the case $t=0$ if we restrict its domain to an open dense subset $\mathbb{F}_{symp}^\circ$ of $\mathbb{F}_{symp}$. 
It is already given by (\ref{psi-0}). 
Using the notation in this section, it should be written as $\Psi_0 \colon \mathbb{F}_{symp}^\circ \to V_{0,symp}^\circ$. 
We also have its lift to the prequantum line bundle.
Thus we have the following:
\begin{equation}\label{psi-0-2}
\begin{array}{ccc} 
(L_{symp}^{\mathbb{F}}, h^{\mathbb{F}}, \nabla^{\mathbb{F}})|_{\mathbb{F}_{symp}^\circ} & \overset{\tilde{\Psi}_0}{\longrightarrow } & (L_{symp}^{V_0}, h^{V_0}, \nabla^{V_0})|_{V_{0,symp}^\circ} \\ 
\downarrow  & & \downarrow  \\
(\mathbb{F}_{symp}^\circ, \rho_{symp}^* \omega_{\mathbb{P}}) & \overset{\Psi_0}{\longrightarrow } & (V_{0,symp}^\circ, \rho_{0,0}^* \omega_{\mathbb{P}}).
\end{array}
\end{equation} 

\subsection{Construction of a family of complex structures}\label{complex-constr}

On $(\mathbb{P}, \omega_{\mathbb{P}}, J_{\mathbb{P}})$, a $(\frac{1}{2} \dim_\mathbb{R} \mathbb{P})$-dimensional torus $T_{\mathbb{P}}$ acts with an open dense orbit.  
On the Gelfand-Cetlin toric variety $Fl_n(0)=V_{0,comp} \subset \mathbb{P}$,  a $(\frac{1}{2} \dim_\mathbb{R} \mathbb{F})$-dimensional torus $T_{GC}$ acts with an open dense subset, as explained in Subsection \ref{stage}. 
There is an injective homomorphism $\tilde{\iota}_{GC} \colon T_{GC} \to T_{\mathbb{P}}$ such that the embedding $\rho_{0,comp} \colon V_{0,comp} \to \mathbb{P}_{comp}$ is equivariant. 
It is described explicitly in Section 6 in \cite{NNU}. 
Let $\iota_{GC}^* \colon \mathfrak{t}_{\mathbb{P}}^* \to \mathfrak{t}_{GC}^*$ be the dual map of the inclusion of the Lie algebras $\iota_{GC} \colon \mathfrak{t}_{GC} \to \mathfrak{t}_{\mathbb{P}}$.
From the description of the map $\tilde{\iota}_{GC} \colon T_{GC} \to T_{\mathbb{P}}$ given in \cite{NNU} we see that $\iota_{GC}^*((\mathfrak{t}_{\mathbb{P}})^*_{\mathbb{Z}})=(\mathfrak{t}_{GC})^*_{\mathbb{Z}}$.

Fix a strictly convex function $\underline{\nu} \colon \mathfrak{t}_{GC}^* \to \mathbb{R}$ and set $\nu = \underline{\nu} \circ \iota_{GC}^* \colon \mathfrak{t}_{\mathbb{P}}^* \to \mathbb{R}$.
Let us consider the diffeomorphism $\chi_s \colon (\mathbb{P}_{symp}, \omega_{\mathbb{P}}) \to (\mathbb{P}_{comp}, J_{\mathbb{P}})$ defined by $g_s=g_0 + s \nu \in SP(\Delta_{\mathbb{P}})$.
Due to Propositions \ref{identify1} and \ref{identify2}, we have the following commutative diagrams:
\begin{equation}\label{deform}
\begin{array}{ccccccc}
(\mathbb{P}_{symp}, \omega_{\mathbb{P}}) \!\!\! & \overset{\chi_s}{\longrightarrow } \!\!\! & (\mathbb{P}_{comp}, J_{\mathbb{P}}) & \!\!\!\! & (L_{symp}^{\mathbb{P}},h^{\mathbb{P}},\nabla^{\mathbb{P}}) \!\!\! & \overset{\tilde{\underline{\chi}}_s}{\longrightarrow } \!\!\! & (L_{comp}^{\mathbb{P}}, \overline{\partial}^{\mathbb{P}}) \\ 
\uparrow \rho_{t,s} \!\!\! &  \!\!\! & \uparrow \rho_{t,comp} & \!\!\!\! & \uparrow \tilde{\rho}_{t,s} \!\!\! &   \!\!\! & \uparrow \tilde{\rho}_{t,comp} \\
(V_{t,symp}, \rho_{t,0}^* \omega_{\mathbb{P}})  \!\!\! & \overset{\underline{\chi}_{t,s}}{\longrightarrow }         \!\!\! & (V_{t,comp}, J_{V_t}) & \!\!\!\! & (L_{symp}^{V_t}, h^{V_t}, \nabla^{V_t}) \!\!\! & \overset{\tilde{\underline{\chi}}_{t,s}}{\longrightarrow } \!\!\! & (L_{comp}^{V_t},\overline{\partial}^{V_t}),
\end{array}
\end{equation}
where $\underline{\chi}_{t,0}=\chi_0 |_{V_{t,symp}}$ and $\tilde{\underline{\chi}}_{t,0}=\tilde{\chi}_0|_{L_{t,symp}^V}$.
Note that $\rho_{t,s}^* \omega_{\mathbb{P}}=\rho_{t,0}^* \omega_{\mathbb{P}} \in \Omega^2(V_{t,symp})$. 

In the case $(t,s)=(1,0)$ the diagrams (\ref{deform}) are the same as the diagrams (\ref{initial}). 
In the case $t=0$ the diagrams (\ref{deform}) describe the deformation of toric K\"ahler structures of the Gelfand-Cetlin toric variety $V_{0,comp}$. 
The defining equation of the image of the embedding $\rho_{0,comp} \colon V_{0,comp} \to \mathbb{P}_{comp}$ is given by the equations $(7)$ in \cite{NNU}.  
From this description we see that the image $\rho_{0,comp}(V_{0,comp})$ contains the point $(1,\dots,1) \in \mathbb{P}$ in the notation in Proposition \ref{toric-subm}.  
So Proposition \ref{sigma-m} can be applied to our case. 
Therefore the holomorphic sections on $V_{0,comp}$ converge to delta-function sections supported on Bohr-Sommerfeld fibers as $s$ goes to infinity. 
Therefore the holomorphic sections on $V_{t,comp}$ are close to delta-function sections when $t$ and $s$ go to zero and infinity, respectively, at the same time. 
So we make $t$ a function of $s$ as follows: 
Let $t \colon [0, \infty) \to \mathbb{R}$ be a monotone decreasing $C^\infty$-function with $t(0)=1$ and $\lim_{s \to \infty}t(s)=0$. 
(In fact, $t(s)$ should be required to satisfy additional conditions, which will be discussed in Lemma \ref{2nd-est} below.)

We define a complex structure $J_s$ on $(\mathbb{F}_{symp}, \rho_{symp}^* \omega_{\mathbb{P}})$ as the pull back of $J_{V_{t(s)}}$ by the following composition of diffeomorphisms, which appeared in the diagrams (\ref{ghflow}) and (\ref{deform}):
$$
(\mathbb{F}_{symp}, \rho_{symp}^* \omega_{\mathbb{P}})  \overset{\Psi_t}{\longrightarrow }  (V_{t,symp}, \rho_{t,0}^* \omega_{\mathbb{P}})  \overset{\underline{\chi}_{t,s}}{\longrightarrow } (V_{t,comp}, J_{V_t}).
$$
Namely, a family of complex structures $\{ J_s \}_{s \in [0, \infty)}$ on $(\mathbb{F}_{symp}, \rho_{symp}^* \omega_{\mathbb{P}})$ is defined by
\begin{equation}\label{js}
J_s= (\underline{\chi}_{t (s),s} \circ \Psi_{t (s)})^* J_{V_{t(s)}}. 
\end{equation}
Then $(1)$ and $(2)$ of Theorem \ref{mainth} follow from the construction of $\{ J_s \}_{s \in [0, \infty)}$.
By Proposition \ref{identify1} $(2)$, $(V_{t,symp}, \rho_{t,0}^* \omega_{\mathbb{P}}, \underline{\chi}_{t,s}^* J_{V_t})$ is a K\"ahler manifold.
Moreover, $(V_{t(s),symp}, \rho_{t(s),0}^* \omega_{\mathbb{P}}, \underline{\chi}_{t(s),s}^* J_{V_{t(s)}})$ is isomorphic to $(\mathbb{F}_{symp}, \rho_{symp}^*\omega_\mathbb{P},J_s )$ as a K\"ahler manifold. 
So Theorem \ref{mainth} $(3)$ follows as well. 
Thus, for any $s \in [0,\infty)$, $J_s$ induces the holomorphic structure $\overline{\partial}^s$ of the Hermitian line bundle $(L^\mathbb{F}_{symp},h^\mathbb{F},\nabla^\mathbb{F})$.  
Note that the map $ \tilde{\underline{\chi}}_{t(s),s} \circ \tilde{\Psi}_{t(s)} \colon (L_{symp}^\mathbb{F}, \overline{\partial}^s) \to (L_{comp}^{V_{t(s)}},\overline{\partial}^{V_{t(s)}})$ is an isomorphism of holomorphic line bundles.

To prove Theorem \ref{mainth} $(4)$, we have to construct a basis $\{ \sigma^m_s ~|~ m \in \Delta_{GC} \cap (\mathfrak{t}_{GC})^*_\mathbb{Z} \}$ of the space of holomorphic sections $H^0(L^\mathbb{F}_{symp}, \overline{\partial}^s)$.  
Recall that the Gelfand-Cetlin polytope $\Delta_{GC}$ is considered as a subset of $\mathfrak{t}_{GC}^*$ as explained in Subsection \ref{stage}.  
Since $\iota_{GC}^* ( (\mathfrak{t}_{\mathbb{P}})^*_\mathbb{Z})= (\mathfrak{t}_{GC})^*_\mathbb{Z}$, for each $m \in \Delta_{GC} \cap (\mathfrak{t}_{GC})^*_\mathbb{Z}$, we can choose $\tilde{m} \in \Delta_{\mathbb{P}} \cap (\mathfrak{t}_{\mathbb{P}})^*_\mathbb{Z}$ such that $\iota^*(\tilde{m})=m$. 
Let $\sigma^{\tilde{m}}$ be the holomorphic section of $(L_{comp}^{\mathbb{P}}, \overline{\partial}^{\mathbb{P}})$ defined by (\ref{def-sigma-m-0}).  
Then $\{ (\tilde{\rho}_{0,comp})^* \sigma_{\tilde{m}} ~|~ m \in \Delta_{GC} \cap (\mathfrak{t}_{GC})^*_\mathbb{Z} \}$ is a basis of the space of holomorphic sections $H^0(L_{comp}^{V_0}, \overline{\partial}^{V_0})$.
So there exists $s_0 >0$ such that, for any $s \ge s_0$, $\{ (\tilde{\rho}_{t (s),comp})^* \sigma^{\tilde{m}} ~|~ m \in \Delta_{GC} \cap (\mathfrak{t}_{GC})^*_\mathbb{Z} \}$ turns out to be a basis of the space of holomorphic sections $H^0(L_{comp}^{V_{t (s)}}, \overline{\partial}^{V_{t (s)}})$. 
So we define, for $s \ge s_0$,
\begin{equation}\label{sigma-ms}
\sigma^m_s= (\underline{\tilde{\chi}}_{t (s),s} \circ\tilde{\Psi}_{t (s)})^* ( (\tilde{\rho}_{t (s),comp})^* \sigma^{\tilde{m}}) ~~\text{for $m \in \Delta_{GC} \cap (\mathfrak{t}_{GC})^*_\mathbb{Z}$.}
\end{equation}
Since all $(V_{t (s), comp}, J_{V_{t (s)}})$ and all $(L_{comp}^{V_{t (s)}}, \overline{\partial}^{V_{t (s)}})$ are isomorphic for $s \ge 0$ as complex manifolds and holomorphic line bundles respectively, we can extend a basis $\{ \sigma^m_s ~|~ m \in \Delta_{GC} \cap (\mathfrak{t}_{GC})^*_\mathbb{Z} \}$ of the space of holomorphic sections $H^0(L^\mathbb{F}_{symp}, \overline{\partial}^s)$ for all $s \in [0,s_0]$, which depends continuously on $s$.  
Thus we have defined the basis $\{ \sigma^m_s ~|~ m \in \Delta_{GC} \cap (\mathfrak{t}_{GC})^*_\mathbb{Z} \}$ of the space of holomorphic sections $H^0(L^\mathbb{F}_{symp}, \overline{\partial}^s)$ for all $s \ge 0$. 

\subsection{Another gradient-Hamiltonian flow}\label{est-gh}
To prove that the holomorphic sections defined by (\ref{sigma-ms}) converge to delta-function sections, we introduce another gradient-Hamiltonian flow.

Let us consider the family of varieties $f \colon (M_n(\mathbb{C})\times \mathbb{C})/\!/B \to \mathbb{C}$ constructed in Subsection \ref{tg}. 
Put the standard K\"ahler metric on $\mathbb{C}$. 
Consider the map $F \colon (M_n(\mathbb{C})\times \mathbb{C})/\!/B \to \mathbb{P}_{symp} \times \mathbb{C}$ given by $F(x)=(\rho_{t,0}(x),t)$ if $x \in V_{t,symp}=f^{-1}(t)$.
We put the K\"ahler metric on the smooth part of $(M_n(\mathbb{C})\times \mathbb{C})/\!/B$ by pulling back the metric on $\mathbb{P}_{symp} \times \mathbb{C}$ by the map $F$.
Consider the gradient-Hamiltonian flow along the straight-line path from $1$ to $0$ in $\mathbb{C}$.  
Since $V_{t, symp}$ are smooth manifolds for all $t \in (0, 1]$, by Lemma~\ref{gr-h-reg-pts} the vector field and thus the flow are defined on all of $V_{t, symp}$ for each $t \in (0, 1]$, and also on $V_{0, symp}^\circ$, where $V_{0, symp}^\circ$ is the same as in (\ref{psi-0-2}). 
Let $V_{t, symp}^\circ \subset V_{t, symp}$ denote the image  of $V_{0, symp}^\circ$ under the reverse flow.
Then, due to Propositions \ref{ghf} and \ref{liftghf}, we have the following symplectic diffeomorphism and its lift defined by the gradient-Hamiltonian flow for $t \in [0,1]$:
$$
\begin{array}{ccc}
(L_{symp}^{V_t}, h^{V_t}, \nabla^{V_t})|_{V_{t,symp}^\circ} & \overset{\tilde{\Phi}_t}{\longrightarrow } & (L_{symp}^{V_0}, h^{V_0}, \nabla^{V_0})|_{V_{0,symp}^\circ} \\ 
\downarrow  & & \downarrow  \\
(V_{t,symp}^\circ, \rho_{t,0}^* \omega_{\mathbb{P}}) & \overset{\Phi_t}{\longrightarrow } & (V_{0,symp}^\circ, \rho_{0,0}^* \omega_{\mathbb{P}}).
\end{array}
$$

Let $\mu_{T_\mathbb{P}} \colon \mathbb{P}_{symp} \to \mathfrak{t}_\mathbb{P}^*$ be the moment map for the $T_\mathbb{P}$-action on $(\mathbb{P}_{symp},\omega_\mathbb{P})$. 
Set $\mu_{T_{GC}}= \iota_{GC}^* \circ \mu_{T_\mathbb{P}} \colon \mathbb{P}_{symp} \to \mathfrak{t}_{GC}^*$.
Fix an open set $B \subset \mathrm{Int}\Delta_{GC}$ such that $\mathrm{Int}\Delta_{GC} \cap (\mathfrak{t}_{GC})^*_{\mathbb{Z}} \subset B$ and $\overline{B} \subset \mathrm{Int}\Delta_{GC}$.
Set $U_0 = \mu_{T_{GC}}^{-1}(B) \cap V_{0,symp} \subset V_{0,symp}^\circ$. 
Here we are considering $V_{0,symp}$ as a subset of $\mathbb{P}_{symp}$ by the embedding $\rho_{0,0}$. 
Moreover, we set $U_t=\Phi_t^{-1}(U_0)$ and $U_t^c= V_{t,symp} \setminus U_t$.
We denote the closure of $U_t$ in $V_{t, symp}$ by $\overline{U_t}$.
Note that $\overline{U_t}$ and $U_t^c$ are compact. 

Let $d_\mathbb{P}( ~~, ~~)$ be the distance on $\mathbb{P}$.
Then we have the following.
\begin{lemma}\label{phi-approx}
For an arbitrary $\epsilon > 0$, there exists $t_1>0$ such that $d_\mathbb{P}( \rho_{t,0}(x), \rho_{0,0}(\Phi_t(x))) < \epsilon$ for any $0 \le t \le t_1$ and $x \in \overline{U_t} \subset V_{t,symp}$.
\end{lemma}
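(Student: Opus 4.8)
The plan is to exhibit $\rho_{0,0}(\Phi_t(x))$ and $\rho_{t,0}(x)$ as the two endpoints of the curve traced in $\mathbb{P}$ by the gradient-Hamiltonian trajectory through $x$, and to bound the length of that curve by a uniform bound on the speed of the flow times the transit time from $V_t$ to $V_0$, which is exactly $t$. Write $\mathcal{X}$ for the smooth locus of $(M_n(\mathbb{C})\times\mathbb{C})/\!/B$, $f\colon\mathcal{X}\to\mathbb{C}$ for the projection, $Z$ for its gradient-Hamiltonian vector field and $\varphi_\tau$ for the associated flow along the real axis, so that $\varphi_\tau$ carries $V_{c,symp}$ to $V_{c-\tau,symp}$ (because $Z(\Re f)=-1$) and $\Phi_t=\varphi_t|_{V_{t,symp}^\circ}$. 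Since the metric on $\mathcal{X}$ is $F^*g$ with $g$ the product K\"ahler metric on $\mathbb{P}_{symp}\times\mathbb{C}$, one has $|(\mathrm{pr}_{\mathbb{P}})_*F_*Z|\le|F_*Z|_g=|Z|_{F^*g}$, where $\mathrm{pr}_{\mathbb{P}}$ is projection onto $\mathbb{P}_{symp}$; and by Lemma~\ref{gr-h-reg-pts}, $|Z|=|\mathrm{grad}(\Re f)|^{-1}$ is finite at every regular point of $f$. (Here $d_{\mathbb{P}}$ is taken to be the Riemannian distance of the K\"ahler metric on $\mathbb{P}$, the choice being immaterial on compact $\mathbb{P}$.)

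Next I would isolate a fixed compact region that controls the estimate uniformly in $t$. Because $\overline{B}\subset\mathrm{Int}\Delta_{GC}$, the set $\overline{U_0}=\mu_{T_{GC}}^{-1}(\overline{B})\cap V_{0,symp}$ is a compact subset of $V_{0,symp}^\circ$, and $\overline{U_t}=\Phi_t^{-1}(\overline{U_0})=\varphi_{-t}(\overline{U_0})$; since $\varphi_{-t}$ is defined on $V_{0,symp}^\circ$ and depends continuously on $(t,y)\in[0,1]\times V_{0,symp}^\circ$, the set $\mathcal{K}:=\bigcup_{t\in[0,1]}\overline{U_t}$ is compact and lies in $\bigcup_{t}V_{t,symp}^\circ$, hence in the regular locus of $f$; therefore $C:=\sup_{\mathcal{K}}|Z|<\infty$. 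The key observation is that the trajectory issuing from a point of $\overline{U_t}$ stays in $\mathcal{K}$: if $x\in\overline{U_t}$ then $\varphi_t(x)=\Phi_t(x)\in\overline{U_0}$, so for $\tau\in[0,t]$ we have $\varphi_\tau(x)=\varphi_{-(t-\tau)}(\varphi_t(x))\in V_{t-\tau,symp}^\circ$ and $\Phi_{t-\tau}(\varphi_\tau(x))=\varphi_t(x)\in\overline{U_0}$, i.e. $\varphi_\tau(x)\in\overline{U_{t-\tau}}\subset\mathcal{K}$.

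Finally, for $x\in\overline{U_t}$ I would consider the curve $c\colon[0,t]\to\mathbb{P}$ given by $c(\tau)=\mathrm{pr}_{\mathbb{P}}(F(\varphi_\tau(x)))=\rho_{t-\tau,0}(\varphi_\tau(x))$, which runs from $\rho_{t,0}(x)$ to $\rho_{0,0}(\Phi_t(x))$. Its speed is $|\dot c(\tau)|=|(\mathrm{pr}_{\mathbb{P}})_*F_*Z|_{\varphi_\tau(x)}\le|Z|_{\varphi_\tau(x)}\le C$ by the previous paragraph, so
$$
d_{\mathbb{P}}\bigl(\rho_{t,0}(x),\rho_{0,0}(\Phi_t(x))\bigr)\le\mathrm{length}(c)=\int_0^t|\dot c(\tau)|\,d\tau\le Ct .
$$
Given $\epsilon>0$ it then suffices to take $t_1=\min\{1,\epsilon/(2(C+1))\}$. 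I expect the main obstacle to be the second paragraph: one must verify that the gradient-Hamiltonian trajectory starting from a point of $\overline{U_t}$ does not reach the singular locus of the degeneration before time $t$, which is precisely what the flow-invariance of the family $\{\overline{U_t}\}_t$ (itself forced by $U_t=\Phi_t^{-1}(U_0)$) and the compactness of $\mathcal{K}$ provide; once $C<\infty$ and the transit time equals $t$ are in hand, the length estimate finishes the argument at once.
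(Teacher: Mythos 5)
Your proof is correct and takes essentially the same approach as the paper's: both bound $\lvert Z \rvert$ on the compact set $\bigcup_{\tau}\overline{U_\tau}$ (using that $\lvert\mathrm{grad}(\Re f)\rvert$ is bounded below there) and conclude that the displacement in $\mathbb{P}$ is at most this bound times the transit time $t$. You make explicit a step the paper glosses over, namely verifying via $\varphi_\tau(x)\in\overline{U_{t-\tau}}$ that the trajectory issuing from $\overline{U_t}$ remains inside this compact set for all intermediate times.
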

\begin{proof}
Fix an arbitrary small $t_1^\prime>0$.
Then $\overline{U_t}$ consists of regular points of $f \colon (M_n(\mathbb{C}) \times \mathbb{C})/\!/B \to \mathbb{C}$ for any $0 \le t \le t_1^\prime$ and $\bigcup_{0 \le t \le t_1}\overline{U_t}$ is compact.
As noted in Lemma~\ref{gr-h-reg-pts}, $\lvert \mathrm{grad}(\Re f) \rvert$ is non-zero at regular points of $f$.  
Therefore there exists a $c>0$ such that $\lvert \mathrm{grad}(\Re f) \rvert \geq c$ on $\overline{U_t}$, for every $t \in [0,t_1^\prime]$.
Thus the gradient-Hamiltonian vector field $Z$ satisfies $\vert Z \vert \leq \frac{1}{c}$ on $\overline{U_t}$ for $t \in [0,t_1^\prime]$.
Since $\Phi_t$ is the flow of $Z$ over a ``time'' $t$, we finish the proof. 
 \end{proof}

Similarly we  have the following.
\begin{lemma}\label{psi-approx}
For an arbitrary  $\epsilon > 0$, there exists $t_2>0$ such that $d_\mathbb{P}(\rho_{1,0}(\Psi_t^{-1}( x)), \rho_{1,0}(\Psi_0^{-1} \circ \Phi_t(x))) < \epsilon$ for all $0 \le t \le t_2$ and $x \in \overline{U_t} \subset V_{t,symp}$, where $\Psi_t$ is the map in $(\ref{ghflow})$ or $(\ref{psi-0-2})$.
\end{lemma}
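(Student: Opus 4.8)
The plan is to view both $\Psi_t^{-1}(x)$ and $\Psi_0^{-1}(\Phi_t(x))$ as gradient-Hamiltonian transports along paths that converge, as $t\to 0$, to the single path $\gamma_0$ defining $\Psi_0$, and then to invoke continuity of the gradient-Hamiltonian flow with respect to the path and the initial point. For $t=0$ the inequality is trivial, since $\Phi_0$ is the identity on $V_{0,symp}^\circ$ and $\overline{U_0}\subset V_{0,symp}^\circ$; so the content is the case of small $t>0$. Here $\Psi_t^{-1}(x)$ is the endpoint of the transport of $x$ backwards along $\gamma_t$ from $V_{t,symp}$ to $\mathbb{F}_{symp}$, while $\Psi_0^{-1}(\Phi_t(x))$ first transports $x$ into $V_{0,symp}^\circ$ along the straight-line path of the family $f$ (which under $Fl_n(t)=Fl_n((t,\dots,t))$ is the diagonal segment in $\mathbb{C}^{n-1}$, up to a harmless reparametrization of the base), and then backwards along $\gamma_0$. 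As $t\to 0$ the first path $\gamma_t$ $C^1$-converges (piecewise) to $\gamma_0$, and the diagonal segment shrinks to $(0,\dots,0)$, so both composite paths converge to the backwards $\gamma_0$-path; moreover, by Lemma~\ref{phi-approx}, the two initial points, viewed in $\mathbb{P}$ via $\rho_{t,0}$ and $\rho_{0,0}$, become uniformly $d_\mathbb{P}$-close over $x\in\overline{U_t}$.

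First I would make the continuity precise. Because the deformed Pl\"ucker coordinates are polynomials in the matrix entries and in the deformation parameters, the K\"ahler data on the relevant total spaces (the family over $\gamma_t$ and the $f$-family), restricted to the union of the open dense smooth loci $Fl_n(\gamma_t(u))^\circ$, depend smoothly on all parameters, including the value $0$; hence the associated gradient-Hamiltonian vector fields $Z_t$ converge, uniformly on compact subsets of these smooth loci, to the vector field $Z_0$ of $\gamma_0$. A Gronwall estimate then shows that backwards transport along $\gamma_t$ from a point converges, uniformly on compacta, to backwards transport along $\gamma_0$ from a nearby point, provided all the trajectories involved remain inside a fixed compact subset of the smooth locus; the same mechanism (with the path shrinking to a point) shows $\Phi_t$ tends to the identity, consistently with Lemma~\ref{phi-approx}.

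To secure that proviso I would use the choice of $U_0$ together with Proposition~\ref{nnu}. Because $\overline{B}\subset\mathrm{Int}\Delta_{GC}$ and $\mathrm{Int}\Delta_{GC}\cap(\mathfrak{t}_{GC})^*_\mathbb{Z}\subset B$, the set $\overline{U_0}$ is a compact subset of $V_{0,symp}^\circ=\mu_{T_{GC}}^{-1}(\mathrm{Int}\Delta_{GC})\cap V_{0,symp}$, bounded away from the boundary; and by Proposition~\ref{nnu}$(3)$ the backwards $\gamma_0$-flow preserves, stage by stage, the relevant moment-map values, so the $\Psi_0^{-1}$-trajectory issued from any point of $\overline{U_0}$ stays inside a compact set $\mathcal{K}\subset\bigcup_u Fl_n(\gamma_0(u))^\circ$ that is uniformly away from the singular strata crossed by $\gamma_0$ in the stages $k\ge 2$ and at the corners. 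Fixing a slightly larger compact neighbourhood $\mathcal{K}'$ of $\mathcal{K}$ inside the smooth locus, the convergence $Z_t\to Z_0$ and of the initial points forces, for $t$ small, the backwards $\gamma_t$-trajectory from $x\in\overline{U_t}$ to stay in $\mathcal{K}'$ and to remain uniformly close to the backwards $\gamma_0$-trajectory from $\Phi_t(x)$. Hence $d_\mathbb{P}(\rho_{1,0}(\Psi_t^{-1}(x)),\rho_{1,0}(\Psi_0^{-1}(\Phi_t(x))))\le C\bigl(\sup_{\overline{U_t}}d_\mathbb{P}(\rho_{t,0}(\cdot),\rho_{0,0}(\Phi_t(\cdot)))+\Vert\gamma_t-\gamma_0\Vert_{C^1}\bigr)$ with $C$ depending only on $\mathcal{K}'$, and both terms tend to $0$ as $t\to 0$ (the first by Lemma~\ref{phi-approx}), which produces the required $t_2$.

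The main obstacle is the third paragraph: exhibiting the fixed compact region $\mathcal{K}'$ and checking that the $\gamma_0$-flow, and then the approximating $\gamma_t$-flows, stay uniformly away from the singular fibers. This is precisely where Proposition~\ref{nnu} and the deliberate choice of $\overline{B}$ compactly inside $\mathrm{Int}\Delta_{GC}$ are needed: for the stages $k\ge 2$ the gradient-Hamiltonian construction lives only on the $^\circ$-subsets, so one must control the trajectories of interest well enough to guarantee that they never approach the locus where the construction degenerates, and that this control passes to the approximating paths for $t$ small. Everything else — smooth dependence of the K\"ahler data on the deformation parameter on the smooth loci, and the Gronwall comparison of two flows with nearby vector fields and initial conditions inside a common compact set — is routine.
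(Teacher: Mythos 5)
Your proposal is correct and takes essentially the same approach as the paper, whose proof is a brief appeal to ``smoothness in initial conditions'' for ODEs, the closeness of $\gamma_t$ to $\gamma_0$ for small $t$, and Lemma~\ref{phi-approx}. Your expanded argument --- the Gronwall comparison of the two gradient-Hamiltonian flows, together with the use of Proposition~\ref{nnu} and Corollary~\ref{nnu2} to confine all relevant trajectories to a fixed compact subset of the smooth locus where the vector fields converge uniformly --- supplies precisely the details the paper leaves implicit, rather than a different route.
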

\begin{proof}
This follows from ``smoothness in initial conditions" results in the theory of differential
equations. Because the path $\gamma_t$ is close to the path $\gamma_0$  considered in Subsection \ref{setup} for small $t>0$, the resulting diffeomorphisms $\Psi_t$ and $\Psi_0$ are very close. 
Combining with Lemma \ref{phi-approx}, we finish the proof.
\end{proof}
\subsection{Convergence to delta-function sections}\label{conv}

For an $m \in \mathrm{Int}\Delta_{GC} \cap (\mathfrak{t}_{GC})^*_\mathbb{Z}$ we have chosen $\tilde{m} \in \Delta_{\mathbb{P}} \cap (\mathfrak{t}_{\mathbb{P}})^*_\mathbb{Z}$ such that $\iota^*(\tilde{m})=m$ and defined the holomorphic section $\sigma^m_s$ by (\ref{sigma-ms}). 
From now on we prove that, if we choose $t(s)$ appropriately for $s \ge 0$, the section $\frac{\sigma^m_s}{\Vert \sigma^m_s \Vert_{L^1(\mathbb{F}_{symp})}}$ for converges to a delta-function section supported on the Bohr-Sommerfeld fiber $\mu_{GC}^{-1}(m) $ as $s$ goes to infinity.
Set, for $0 \le t \le 1$, $s >> 0$,
$$
\tau^m_{t,s} = \frac{\tilde{\underline{\chi}}_{t,s}^* (\tilde{\rho}_{t,comp}^* \sigma^{\tilde{m}})}{\Vert \tilde{\underline{\chi}}_{t,s}^* (\tilde{\rho}_{t,comp}^* \sigma^{\tilde{m}}) \Vert_{L^1(V_{t,symp})}} 
\in H^0(L_{symp}^{V_t},\tilde{\underline{\chi}}_{t,s}^* \overline{\partial}^{V_t}).
$$
Since
$$
\tilde{\Psi}_{t}^* \tau^m_{t,s}
=\frac{\tilde{\Psi}_{t}^* \tilde{\underline{\chi}}_{t,s}^* (\tilde{\rho}_{t,comp}^* \sigma^{\tilde{m}})}{\Vert \tilde{\underline{\chi}}_{t,s}^* (\tilde{\rho}_{t,comp}^* \sigma^{\tilde{m}}) \Vert_{L^1(V_{t,symp})}}
=\frac{\tilde{\Psi}_{t}^* \tilde{\underline{\chi}}_{t,s}^* (\tilde{\rho}_{t,comp}^* \sigma^{\tilde{m}})}{\Vert \tilde{\Psi}_{t}^* \tilde{\underline{\chi}}_{t,s}^* (\tilde{\rho}_{t,comp}^* \sigma^{\tilde{m}}) \Vert_{L^1(\mathbb{F}_{symp})}}, 
$$
we have $\tilde{\Psi}_{t(s)}^* \tau^m_{t(s),s}= \frac{\sigma^m_s}{\Vert \sigma^m_s \Vert_{L^1(\mathbb{F}_{symp})}}$, where $t(s)$ will be defined in Lemma \ref{2nd-est} below.

For a section $\phi \in \Gamma((L_{symp}^{\mathbb{F}})^*)$, we denote the push-forward of $\phi$ with respect to the map $\tilde{\Psi}_t$ by $\tilde{\Psi}_{t*}\phi$, which is a section of the line bundle $(L_{symp}^{V_t})^*$ for $t>0$ or a section of $(L_{symp}^{V_0})^*$ restricted to some open dense subset of $V_{0,symp}$ for $t=0$. 
In what follows, we omit the notation for the volume form when integrating on $\mathbb{F}_{symp}$ or $V_{t, symp}$, since it is preserved by the maps $\Psi_t$ and $\Phi_t$.
First we have the following: 
\begin{lemma}\label{bscg}
$(1)$ For $m \in \mathrm{Int}\Delta_{GC}$, $\mu_{T_{GC}^{-1}}(m) \cap V_{0,symp}$ is a Bohr-Sommerfeld fiber for $(L_{symp}^{V_0},h^{V_0},\nabla^{V_0})$ if and only if $m \in (\mathfrak{t}_{GC})^*_{\mathbb{Z}}$. \\
$(2)$ For $m \in \mathrm{Int}\Delta_{GC} \cap (\mathfrak{t}_{GC})^*_{\mathbb{Z}}$, there exist a covariantly constant section $\delta_m$ of $(L_{symp}^{V_0},h^{V_0},\nabla^{V_0})|_{\mu_{T_{GC}}^{-1}(m) \cap V_{0,symp}}$ and a measure $d \theta_m$ on $\mu_{T_{GC}}^{-1}(m) \cap V_{0,symp}$ which satisfy the following: 
for any $\phi \in \Gamma((L_{symp}^{\mathbb{F}})^*)$, there exists $C_1(s, \phi)>0$ for $s \ge 0$, such that $\lim_{s \to \infty}C_1(s, \phi)=0$ and 
\begin{equation}\label{2}
\left\lvert
\int_{V_{0,symp}} \langle \tilde{\Psi}_{0*} \phi, \tau^m_{0,s}\rangle
- \int_{\mu_{T_{GC}}^{-1}(m) \cap V_{0,symp}} \langle \tilde{\Psi}_{0 *}\phi , \delta_{m} \rangle \, d\theta_m
\right\rvert \le C_1(s, \phi).
\end{equation}
\end{lemma}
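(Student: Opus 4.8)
The plan is to deduce Lemma~\ref{bscg} from Proposition~\ref{sigma-m}, applied to the ambient toric manifolds $(\mathbb{P}_{symp},\omega_{\mathbb{P}})$, $(\mathbb{P}_{comp},J_{\mathbb{P}})$, the subvariety $V_{0,comp}=Fl_n(0)$ together with $V_{0,symp}=\chi_0^{-1}(V_{0,comp})$, the subtorus $T_{GC}\hookrightarrow T_{\mathbb{P}}$, and the strictly convex function $\underline{\nu}$. First I would verify the hypotheses of Proposition~\ref{sigma-m}: that $V_{0,comp}$ is a closed toric subvariety of $(\mathbb{P}_{comp},J_{\mathbb{P}})$ containing the point $e=(1,\dots,1)$ with open orbit the $T_{GC}$-orbit through $e$ (recorded in Subsection~\ref{complex-constr} from the defining equations $(7)$ of \cite{NNU}); that the deformation has the required shape, since $g_s=g_0+s(\underline{\nu}\circ\iota_{GC}^*)$; and that $\iota_{GC}^*\bigl((\mathfrak{t}_{\mathbb{P}})^*_{\mathbb{Z}}\bigr)=(\mathfrak{t}_{GC})^*_{\mathbb{Z}}$, also established in Subsection~\ref{complex-constr}. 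Finally, $\mu_{T_{GC}}|_{V_{0,symp}}$ is the moment map for the $T_{GC}$-action on $(V_{0,symp},\rho_{0,0}^*\omega_{\mathbb{P}})$, and by Corollary~\ref{nnu2} (and the identification of $V_{0,comp}$ with the Gelfand--Cetlin toric variety) its image is $\Delta_V=\Delta_{GC}$; in particular $V_{0,symp}^\circ=\mu_{T_{GC}}^{-1}(\mathrm{Int}\Delta_{GC})\cap V_{0,symp}$.

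Granting this, part~$(1)$ is exactly Proposition~\ref{sigma-m}$(2)$ for the above data. For part~$(2)$, fix $m\in\mathrm{Int}\Delta_{GC}\cap(\mathfrak{t}_{GC})^*_{\mathbb{Z}}$ and the lift $\tilde m\in\Delta_{\mathbb{P}}\cap(\mathfrak{t}_{\mathbb{P}})^*_{\mathbb{Z}}$ with $\iota_{GC}^*\tilde m=m$ chosen in Subsection~\ref{complex-constr}. By Proposition~\ref{toric-subm}$(2)$ the bundle map $\tilde{\underline{\chi}}_{0,s}$ is the restriction of the ambient $\tilde{\chi}_s$, hence $\tilde{\underline{\chi}}_{0,s}^*(\tilde\rho_{0,comp}^*\sigma^{\tilde m})=(\tilde{\chi}_s^*\sigma^{\tilde m})|_{V_{0,symp}}$, so $\tau^m_{0,s}$ is precisely the normalized section $\tau^{\tilde m}_s$ of Proposition~\ref{sigma-m}, restricted to $V_{0,symp}$. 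Proposition~\ref{sigma-m}$(4)$ then supplies the covariantly constant section $\delta_m$ on $(L_{symp}^{V_0},h^{V_0},\nabla^{V_0})|_{\mu_{T_{GC}}^{-1}(m)\cap V_{0,symp}}$ and the measure $d\theta_m$, together with the statement that $\int_{V_{0,symp}}\langle\phi',\tau^m_{0,s}\rangle\to\int_{\mu_{T_{GC}}^{-1}(m)\cap V_{0,symp}}\langle\phi',\delta_m\rangle\,d\theta_m$ for every \emph{globally smooth} section $\phi'$ of $(L_{symp}^{V_0})^*$.

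The one point that is not purely formal — and which I regard as the main obstacle — is that the test object in Lemma~\ref{bscg} is $\tilde{\Psi}_{0*}\phi$, which a priori is defined and smooth only on the open dense subset $V_{0,symp}^\circ$, not on all of $V_{0,symp}$. I would resolve this using two facts: $\tilde{\Psi}_0$ preserves the Hermitian metrics (Proposition~\ref{liftghf}), so $|\tilde{\Psi}_{0*}\phi|\le\|\phi\|_{C^0(\mathbb{F}_{symp})}$ wherever it is defined; and, since $m\in\mathrm{Int}\Delta_{GC}$, the fibre $\mu_{T_{GC}}^{-1}(m)\cap V_{0,symp}$ lies in $V_{0,symp}^\circ$, so $\tilde{\Psi}_{0*}\phi$ is smooth on $\mu_{T_{GC}}^{-1}(\overline{B_m})\cap V_{0,symp}$ for a small open neighbourhood $B_m$ of $m$ with $\overline{B_m}\subset\mathrm{Int}\Delta_{GC}$. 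Choosing a cutoff supported in $\mu_{T_{GC}}^{-1}(B_m)\cap V_{0,symp}$ and equal to $1$ on $\mu_{T_{GC}}^{-1}(B'_m)\cap V_{0,symp}$ for a smaller neighbourhood $B'_m\ni m$, I obtain a globally smooth section $\phi'$ of $(L_{symp}^{V_0})^*$ with $\phi'=\tilde{\Psi}_{0*}\phi$ on $\mu_{T_{GC}}^{-1}(B'_m)\cap V_{0,symp}$ and $\|\phi'\|_{C^0}\le\|\phi\|_{C^0}$. Splitting $\int_{V_{0,symp}}\langle\tilde{\Psi}_{0*}\phi,\tau^m_{0,s}\rangle$ over $\mu_{T_{GC}}^{-1}(B'_m)\cap V_{0,symp}$ and its complement, the $C^0$-estimate of Proposition~\ref{sigma-m}$(3)$ (applied with the neighbourhood $B'_m$) bounds both the complement term and the corresponding error for $\phi'$ by a constant multiple of $C_0(s)\to0$, while Proposition~\ref{sigma-m}$(4)$ handles the main term; since $\phi'$ agrees with $\tilde{\Psi}_{0*}\phi$ on the fibre, the limit is $\int_{\mu_{T_{GC}}^{-1}(m)\cap V_{0,symp}}\langle\tilde{\Psi}_{0*}\phi,\delta_m\rangle\,d\theta_m$. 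Taking $C_1(s,\phi)$ to be the absolute value of the difference of the two integrals in $(\ref{2})$ plus $e^{-s}$ (to keep it strictly positive) completes the argument. Everything beyond this reduces to bookkeeping of the cutoff regions, and no new analytic input is needed beyond Propositions~\ref{sigma-m}, \ref{toric-subm} and~\ref{liftghf} and Corollary~\ref{nnu2}.
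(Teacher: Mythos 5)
Your proposal is correct and takes essentially the same route as the paper: both parts of the lemma are deduced from Proposition~\ref{sigma-m}, with part $(1)$ from Proposition~\ref{sigma-m}$(2)$ and part $(2)$ from Proposition~\ref{sigma-m}$(4)$, after noting (as in Subsection~\ref{complex-constr}) that $V_{0,comp}$ together with $T_{GC}\hookrightarrow T_{\mathbb{P}}$, the lattice condition $\iota_{GC}^*((\mathfrak{t}_{\mathbb{P}})^*_{\mathbb{Z}})=(\mathfrak{t}_{GC})^*_{\mathbb{Z}}$, and $g_s=g_0+s(\underline{\nu}\circ\iota_{GC}^*)$ satisfy the hypotheses of Propositions~\ref{toric-subm} and~\ref{sigma-m}, and that Proposition~\ref{toric-subm}$(2)$ identifies $\tau^m_{0,s}$ with the restriction of the section of Proposition~\ref{sigma-m}$(3)$.

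The one place you go beyond the paper's one-line proof is the cutoff argument that you insert to handle the fact that $\tilde{\Psi}_{0*}\phi$ is defined and smooth only on $V_{0,symp}^\circ$, whereas Proposition~\ref{sigma-m}$(4)$ is stated for globally smooth test sections of $(L^V_{symp})^*$. This is a genuine and legitimate point of care: the paper silently relies on the facts that $V_{0,symp}^\circ$ has full measure, that $\tilde{\Psi}_{0*}\phi$ is uniformly bounded because $\tilde{\Psi}_0$ preserves the Hermitian metric (Proposition~\ref{liftghf}), and that the concentration estimate of Proposition~\ref{sigma-m}$(3)$ kills the contribution away from the Bohr--Sommerfeld fibre (which lies in $V_{0,symp}^\circ$ since $m\in\mathrm{Int}\Delta_{GC}$, by Corollary~\ref{nnu2}). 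Your truncation of $\tilde{\Psi}_{0*}\phi$ to a globally smooth $\phi'$ agreeing with it near the fibre, combined with the $C^0$ tail bound, makes that reasoning explicit, and the bookkeeping is correct. The only thing you omit, which the paper records in a closing sentence and uses later, is that since $\mathrm{Int}\Delta_{GC}\cap(\mathfrak{t}_{GC})^*_{\mathbb{Z}}$ is finite one may choose $C_1(s,\phi)$ uniformly in $m$; this is not required by the statement of Lemma~\ref{bscg} itself and does not affect the validity of your argument.
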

\begin{proof}
$(1)$ follows from Proposition \ref{sigma-m} $(2)$. \\
$(2)$ follows from Proposition \ref{sigma-m} $(4)$.
Since the number of points in $\mathrm{Int}\Delta_{GC} \cap (\mathfrak{t}_{GC})^*_\mathbb{Z}$ is finite, we can choose $C_1(s, \phi)$ independently of $m \in \mathrm{Int}\Delta_{GC} \cap (\mathfrak{t}_{GC})^*_\mathbb{Z}$.
\end{proof}
We take $U_t \subset V_{t,symp}$ as in Subsection \ref{est-gh}. 
Then we have the following.
\begin{lemma}\label{1st-est}
For each section $\phi \in \Gamma((L_{symp}^{\mathbb{F}})^*)$, the following holds.
\begin{align*}
& \left\lvert \int_{\mathbb{F}_{symp}} \langle \phi, \tilde{\Psi}_t^*\tau^m_{t,s} \rangle  
- \int_{\mu_{T_{GC}}^{-1}(m) \cap V_{0,symp}} \langle \tilde{\Psi}_{0 *}\phi , \delta_{m} \rangle \, d\theta_m \right\rvert\\
& \le C_1(s, \phi) + \mathrm{vol}(\mathbb{F}_{symp}) \Vert \phi \Vert_{C^0(\mathbb{F}_{symp})} (\Vert \tau^m_{t,s} \Vert_{C^0(U_t^c)} + \Vert \tilde{\Phi}_t^* \tau^m_{0,s} \Vert_{C^0(U_t^c)}) \\
& \hspace*{3mm} + \mathrm{vol}(\mathbb{F}_{symp}) \Vert \phi \Vert_{C^0(\mathbb{F}_{symp})}\lVert \tau^m_{t,s} - \tilde{\Phi}_t^* \tau^m_{0,s} \rVert_{C^0(U_t )}+ \lVert \tilde{\Psi}_{t *} \phi - \tilde{\Phi}_t^* \tilde{\Psi}_{0 *} \phi \rVert_{C^0(U_t)}.  
\end{align*}
\end{lemma}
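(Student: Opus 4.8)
The plan is to prove the estimate by a telescoping argument, interpolating between the two sides of the inequality through the chain of auxiliary quantities
\begin{align*}
&\int_{V_{t,symp}}\langle\tilde{\Psi}_{t*}\phi,\tau^m_{t,s}\rangle,\qquad
\int_{U_t}\langle\tilde{\Psi}_{t*}\phi,\tau^m_{t,s}\rangle,\qquad
\int_{U_t}\langle\tilde{\Phi}_t^{*}\tilde{\Psi}_{0*}\phi,\tilde{\Phi}_t^{*}\tau^m_{0,s}\rangle,\\
&\int_{U_0}\langle\tilde{\Psi}_{0*}\phi,\tau^m_{0,s}\rangle,\qquad
\int_{V_{0,symp}}\langle\tilde{\Psi}_{0*}\phi,\tau^m_{0,s}\rangle,\qquad
\int_{\mu_{T_{GC}}^{-1}(m)\cap V_{0,symp}}\langle\tilde{\Psi}_{0*}\phi,\delta_m\rangle\,d\theta_m,
\end{align*}
and then bounding each consecutive difference. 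Summing the bounds by the triangle inequality will give exactly the five terms on the right-hand side: one term comes from passing from $V_{t,symp}$ to $U_t$, two come from changing the integrand on $U_t$, one comes from passing from $U_0$ back to $V_{0,symp}$, and the last is $C_1(s,\phi)$ from Lemma \ref{bscg}.

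The first step uses that $\Psi_t$ is a symplectomorphism and $\tilde{\Psi}_t$ its lift preserving the Hermitian metric (Proposition \ref{liftghf}): since the pairing is natural under pullback, $\langle\phi,\tilde{\Psi}_t^{*}\tau^m_{t,s}\rangle=\Psi_t^{*}\langle\tilde{\Psi}_{t*}\phi,\tau^m_{t,s}\rangle$, and $\Psi_t$ preserves the Liouville volume, so $\int_{\mathbb{F}_{symp}}\langle\phi,\tilde{\Psi}_t^{*}\tau^m_{t,s}\rangle=\int_{V_{t,symp}}\langle\tilde{\Psi}_{t*}\phi,\tau^m_{t,s}\rangle$. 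Discarding $U_t^c=V_{t,symp}\setminus U_t$ costs at most $\mathrm{vol}(\mathbb{F}_{symp})\Vert\phi\Vert_{C^0(\mathbb{F}_{symp})}\Vert\tau^m_{t,s}\Vert_{C^0(U_t^c)}$, using $\mathrm{vol}(U_t^c)\le\mathrm{vol}(V_{t,symp})=\mathrm{vol}(\mathbb{F}_{symp})$ and the pointwise identity $\Vert\tilde{\Psi}_{t*}\phi\Vert=\Vert\phi\Vert$. On $U_t$ I split
$\langle\tilde{\Psi}_{t*}\phi,\tau^m_{t,s}\rangle-\langle\tilde{\Phi}_t^{*}\tilde{\Psi}_{0*}\phi,\tilde{\Phi}_t^{*}\tau^m_{0,s}\rangle
=\langle\tilde{\Psi}_{t*}\phi-\tilde{\Phi}_t^{*}\tilde{\Psi}_{0*}\phi,\tau^m_{t,s}\rangle+\langle\tilde{\Phi}_t^{*}\tilde{\Psi}_{0*}\phi,\tau^m_{t,s}-\tilde{\Phi}_t^{*}\tau^m_{0,s}\rangle$;
the first summand integrates to at most $\Vert\tilde{\Psi}_{t*}\phi-\tilde{\Phi}_t^{*}\tilde{\Psi}_{0*}\phi\Vert_{C^0(U_t)}$ because $\tau^m_{t,s}$ is $L^1$-normalized on $V_{t,symp}\supset U_t$, and the second to at most $\mathrm{vol}(\mathbb{F}_{symp})\Vert\phi\Vert_{C^0(\mathbb{F}_{symp})}\Vert\tau^m_{t,s}-\tilde{\Phi}_t^{*}\tau^m_{0,s}\Vert_{C^0(U_t)}$, again using that $\tilde{\Phi}_t$ and $\tilde{\Psi}_0$ preserve pointwise norms (Proposition \ref{liftghf}). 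Next, $\int_{U_t}\langle\tilde{\Phi}_t^{*}\tilde{\Psi}_{0*}\phi,\tilde{\Phi}_t^{*}\tau^m_{0,s}\rangle=\int_{U_0}\langle\tilde{\Psi}_{0*}\phi,\tau^m_{0,s}\rangle$ exactly, since $\Phi_t\colon U_t\to U_0$ is a diffeomorphism preserving the symplectic, hence the volume, form (Proposition \ref{ghf}). Passing from $U_0$ back to $V_{0,symp}$ adds $\int_{V_{0,symp}^\circ\setminus U_0}\langle\tilde{\Psi}_{0*}\phi,\tau^m_{0,s}\rangle$; pulling this back by $\Phi_t$, which carries $V_{t,symp}^\circ\setminus U_t$ onto $V_{0,symp}^\circ\setminus U_0$, and using $V_{t,symp}^\circ\setminus U_t\subset U_t^c$ bounds it by $\mathrm{vol}(\mathbb{F}_{symp})\Vert\phi\Vert_{C^0(\mathbb{F}_{symp})}\Vert\tilde{\Phi}_t^{*}\tau^m_{0,s}\Vert_{C^0(U_t^c)}$. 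Finally, the last difference is $\le C_1(s,\phi)$ by estimate (\ref{2}) of Lemma \ref{bscg}.

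The only genuinely delicate point is bookkeeping about domains of definition: $\Psi_0$ and the flow maps $\Phi_t$ are defined only on the open dense subsets $\mathbb{F}_{symp}^\circ$ and $V_{t,symp}^\circ$, so one must check that $U_t\subset V_{t,symp}^\circ$ (which holds because $U_t=\Phi_t^{-1}(U_0)$ with $U_0\subset V_{0,symp}^\circ$) and interpret the integrals over $V_{0,symp}$ as integrals over the full-measure subset $V_{0,symp}^\circ$, the complement being measure zero and therefore harmless. Aside from this, the argument is a routine application of the triangle inequality together with three facts already in hand: preservation of the volume form by $\Psi_t$ and $\Phi_t$ (Proposition \ref{ghf} and the symplectic statements in (\ref{ghflow}) and (\ref{psi-0-2})), preservation of the Hermitian metrics by the lifts $\tilde{\Psi}_t$, $\tilde{\Phi}_t$, $\tilde{\Psi}_0$ (Proposition \ref{liftghf}), and the $L^1$-normalization built into $\tau^m_{t,s}$. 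Hence no substantial obstacle is expected beyond this tracking of where each object lives.
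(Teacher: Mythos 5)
Your proof is correct and follows essentially the same approach as the paper's: a triangle-inequality decomposition through intermediate integrals using the volume-preserving maps $\Psi_t$, $\Phi_t$, the $L^1$-normalization of $\tau^m_{t,s}$, and Lemma \ref{bscg}. The only cosmetic differences are that you explicitly telescope $V_{t,symp}\to U_t\to U_0\to V_{0,symp}$ where the paper splits the single integral over $V_{t,symp}$ into pieces over $U_t$ and $U_t^c$, and your Leibniz-style split on $U_t$ assigns the roles of $\phi$ and $\tau$ the opposite way; both yield the identical final bound.
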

\begin{proof}
Fix arbitrary $\phi \in \Gamma((L_{symp}^{\mathbb{F}})^*)$. 
Then we have:
\begin{align}\label{1}
& \left\lvert \int_{\mathbb{F}_{symp}} \langle \phi, \tilde{\Psi}_t^*\tau^m_{t,s} \rangle  
- \int_{\mu_{T_{GC}}^{-1}(m) \cap V_{0,symp}} \langle \tilde{\Psi}_{0 *}\phi , \delta_{m} \rangle \, d\theta_m \right\rvert
\\
& =\left\lvert \int_{V_{t,symp}} \langle \tilde{\Psi}_{t *} \phi , \tau^m_{t,s} \rangle 
- \int_{\mu_{T_{GC}}^{-1}(m) \cap V_{0,symp}} \langle \tilde{\Psi}_{0 *}\phi , \delta_{m} \rangle \, d\theta_m \right\rvert
\nonumber \\
& \le
 \left\lvert \int_{V_{t,symp}} \langle \tilde{\Psi}_{t *} \phi , \tau^m_{t,s} \rangle 
 - \int_{V_{0,symp}} \langle \tilde{\Psi}_{0 *} \phi, \tau^m_{0,s} \rangle \right\rvert 
\nonumber \\
& \hspace*{10mm}  + \left\lvert \int_{V_{0,symp}} \langle \tilde{\Psi}_{0 *}\phi, \tau^m_{0,s} \rangle -
\int_{\mu_{T_{GC}}^{-1}(m) \cap V_{0,symp}} \langle \tilde{\Psi}_{0 *}\phi , \delta_{m} \rangle \, d\theta_m \right\rvert \nonumber 
\end{align} 
The second term on the right hand side of (\ref{1}) is estimated by (\ref{2}).
Next we estimate the first term on the right hand side of (\ref{1}).
\begin{align}\label{3}
& \left\lvert \int_{V_{t,symp}} \langle \tilde{\Psi}_{t *} \phi , \tau^m_{t,s} \rangle 
 - \int_{V_{0,symp}} \langle \tilde{\Psi}_{0 *} \phi,  \tau^m_{0,s} \rangle \right\rvert 
\\ 
& = \left\lvert \int_{V_{t,symp}} \langle \tilde{\Psi}_{t *} \phi , \tau^m_{t,s} \rangle 
 - \int_{V_{t,symp}} \langle \tilde{\Phi}_t^* \tilde{\Psi}_{0 *} \phi, \tilde{\Phi}_t^* \tau^m_{0,s} \rangle \right\rvert 
\nonumber \\
& \le \left\lvert \int_{U_t}
\langle \tilde{\Psi}_{t *} \phi, \tau^m_{t,s} \rangle 
- \langle \tilde{\Phi}_t^* \tilde{\Psi}_{0 *} \phi, \tilde{\Phi}_t^* \tau^m_{0,s} \rangle \right\rvert
\! + \! \left\lvert\int_{U_t^c} \langle \tilde{\Psi}_{t\ast} \phi, \tau^m_{t,s} \rangle 
- \langle \tilde{\Phi}_t^* \tilde{\Psi}_{0 \ast} \phi, \tilde{\Phi}_t^* \tau^m_{0,s} \rangle 
\right\rvert 
\nonumber \\
& \le \left\lvert \int_{U_t} 
\langle \tilde{\Psi}_{t *} \phi, \tau^m_{t,s} \rangle 
- \langle \tilde{\Phi}_t^* \tilde{\Psi}_{0 *} \phi, \tilde{\Phi}_t^* \tau^m_{0,s} \rangle \right\rvert
\nonumber \\
& \hspace*{20mm} +  \text{vol}(\mathbb{F}_{symp}) \Vert \phi \Vert_{C^0(\mathbb{F}_{symp})} (\Vert \tau^m_{t,s} \Vert_{C^0(U_t^c)} + \Vert \tilde{\Phi}_t^* \tau^m_{0,s} \Vert_{C^0(U_t^c)}). \nonumber 
\end{align}

Finally we estimate the first term on the right hand side of (\ref{3}).
If we note that 
\begin{align*}
& \int_{U_t} \lvert \tilde{\Psi}_{t *} \phi \rvert \le \mathrm{vol}(U_t) \lVert \tilde{\Psi}_{t *} \phi \rVert_{C^0(U_t)}  \le \mathrm{vol}(\mathbb{F}_{symp}) \lVert \phi \rVert_{C^0(\mathbb{F}_{symp})},
\\
& \int_{U_t} \lvert \tilde{\Phi}_t^* \tau^m_{0,s}\rvert = \int_{U_0} \lvert \tau^m_{0,s}\rvert \le \int_{U_0} \frac{\lvert \tilde{\underline{\chi}}_{0,s}^* (\tilde{\rho}_{0,comp}^* \sigma^{\tilde{m}}) \rvert}{\Vert \tilde{\underline{\chi}}_{0,s}^* (\tilde{\rho}_{0,comp}^* \sigma^{\tilde{m}}) \Vert_{L^1(V_{0,symp})}} \le 1, 
\end{align*}
then we have
\begin{align}\label{4}
& \left\lvert \int_{U_t} 
\langle \tilde{\Psi}_{t *} \phi, \tau^m_{t,s} \rangle 
- \langle \tilde{\Phi}_t^* \tilde{\Psi}_{0 *} \phi, \tilde{\Phi}_t^* \tau^m_{0,s} \rangle \right\rvert
\\
&\hspace*{0mm}  \leq \left\lvert \int_{U_t} 
\langle \tilde{\Psi}_{t *} \phi , \tau^m_{t, s} - \tilde{\Phi}_t^* \tau^m_{0,s} \rangle \right \rvert 
+ \left \lvert \int_{U_t} 
\langle \tilde{\Psi}_{t *} \phi   -  \tilde{\Phi}_t^* \tilde{\Psi}_{0 *} \phi, \tilde{\Phi}_t^* \tau^m_{0,s} \rangle \right\rvert 
\nonumber \\
& \hspace*{0mm} \leq  \lVert \tau^m_{t,s} - \tilde{\Phi}_t^* \tau^m_{0,s} \rVert_{C^0(U_t )} 
      \int_{U_t} \lvert \tilde{\Psi}_{t *} \phi \rvert 
+ \lVert \tilde{\Psi}_{t *} \phi - \tilde{\Phi}_t^* \tilde{\Psi}_{0 *} \phi \rVert_{C^0(U_t)} \int_{U_t} \lvert \tilde{\Phi}_t^* \tau^m_{0,s}\rvert
\nonumber \\
& \hspace*{0mm} \leq  \lVert \tau^m_{t,s} - \tilde{\Phi}_t^* \tau^m_{0,s} \rVert_{C^0(U_t )} 
      \mathrm{vol}(\mathbb{F}_{symp}) \lVert \phi \rVert_{C^0(\mathbb{F}_{symp})} 
+ \lVert \tilde{\Psi}_{t *} \phi - \tilde{\Phi}_t^* \tilde{\Psi}_{0 *} \phi \rVert_{C^0(U_t)}.\nonumber 
\end{align}
By (\ref{1}), (\ref{2}), (\ref{3}) and (\ref{4}) we finish the proof of Lemma \ref{1st-est}.
\end{proof}

Next we introduce a function $t \colon [0, \infty) \to \mathbb{R}$ so that the holomorphic sections $\sigma^m_s$ converges to delta-function sections as $s$ goes to infinity. 
\begin{lemma}\label{2nd-est}
There exists a continuous decreasing function $t \colon [0, \infty) \to \mathbb{R}$ with $t(0)=1$ and $\lim_{s \to \infty}t(s)=0$ which satisfies the following: for any $\phi \in \Gamma((L_{symp}^\mathbb{F})^*)$, there exists a constant $C_2(s, \phi)>0$ with $\lim_{s \to \infty}C_2(s, \phi)=0$ such that
$$
\left\lvert \int_{\mathbb{F}_{symp}} \langle \phi, \tilde{\Psi}_{t(s)}^* \tau^m_{t(s),s} \rangle 
- \int_{\mu_{T_{GC}}^{-1}(m) \cap V_{0,symp}} \langle \tilde{\Psi}_{0 *}\phi , \delta_{m} \rangle \, d\theta_m \right\rvert
\le C_2(s, \phi).
$$
\end{lemma}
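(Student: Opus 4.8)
The plan is to substitute $t=t(s)$ into the estimate of Lemma~\ref{1st-est} and to choose the function $t(s)$ so that every summand of its right-hand side tends to $0$ as $s\to\infty$, for each fixed section $\phi$. Apart from $C_1(s,\phi)$, that right-hand side is $\mathrm{vol}(\mathbb{F}_{symp})\Vert\phi\Vert_{C^0(\mathbb{F}_{symp})}$ times the three $\phi$-independent norms $\Vert\tau^m_{t,s}\Vert_{C^0(U_t^c)}$, $\Vert\tilde{\Phi}_t^*\tau^m_{0,s}\Vert_{C^0(U_t^c)}$, $\Vert\tau^m_{t,s}-\tilde{\Phi}_t^*\tau^m_{0,s}\Vert_{C^0(U_t)}$, plus $\Vert\tilde{\Psi}_{t*}\phi-\tilde{\Phi}_t^*\tilde{\Psi}_{0*}\phi\Vert_{C^0(U_t)}$. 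Two of these come almost for free: $C_1(s,\phi)$ is produced at $t=0$ in Lemma~\ref{bscg}, so it is independent of $t$ and $\to 0$ as $s\to\infty$; and since $U_t=\Phi_t^{-1}(U_0)$ is independent of $s$ while $\tilde{\Psi}_t,\tilde{\Phi}_t$ depend continuously on their defining paths (their bundle lifts being parallel transports, and $\gamma_t\to\gamma_0$ as $t\to 0$), Lemma~\ref{psi-approx} shows that for each fixed $\phi$ the term $\Vert\tilde{\Psi}_{t*}\phi-\tilde{\Phi}_t^*\tilde{\Psi}_{0*}\phi\Vert_{C^0(U_t)}$ tends to $0$ as $t\to 0$, uniformly in $s$. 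So the whole problem reduces to choosing $t(s)\to 0$ making the three $\phi$-independent $C^0$-norms small.

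For the first of these, $\tilde{\Phi}_t$ is an isometry of the restricted prequantum bundles (Proposition~\ref{liftghf}) and, by the very definition of $U_t$, $\Phi_t(U_t^c\cap V_{t,symp}^\circ)=V_{0,symp}^\circ\setminus U_0$; hence $\Vert\tilde{\Phi}_t^*\tau^m_{0,s}\Vert_{C^0(U_t^c)}=\Vert\tau^m_{0,s}\Vert_{C^0(V_{0,symp}^\circ\setminus U_0)}\le C_0(s)$ by Proposition~\ref{sigma-m}(3) (applied with a ball around $m$ contained in $B$), a bound independent of $t$ that $\to 0$. For the other two, the key inputs are flatness of $f\colon(M_n(\mathbb{C})\times\mathbb{C})/\!/B\to\mathbb{C}$ (Proposition~\ref{propkm}) and polynomiality of the deformed Pl\"ucker coordinates $q_I(V,t)$ in $t$: these make $V_{t,comp}$, the sections $\tilde{\rho}_{t,comp}^*\sigma^{\tilde m}$, the maps $\underline{\chi}_{t,s},\tilde{\underline{\chi}}_{t,s}$ and $\Phi_t,\tilde{\Phi}_t$ depend continuously on $t$, and make the normalizing factors $\Vert\tilde{\chi}_s^*\sigma^{\tilde m}|_{V_{t,symp}}\Vert_{L^1(V_{t,symp})}$ depend continuously on $t$ and stay bounded below near $t=0$ for each fixed $s$ (the $t=0$ bound being the one obtained in the proof of Proposition~\ref{sigma-m}(3)). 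From this one gets $(\Phi_t)_*\tau^m_{t,s}\to\tau^m_{0,s}$ in $C^0$ on the compact set $\overline{U_0}$, hence $\Vert\tau^m_{t,s}-\tilde{\Phi}_t^*\tau^m_{0,s}\Vert_{C^0(U_t)}\to 0$ as $t\to 0$. For $\Vert\tau^m_{t,s}\Vert_{C^0(U_t^c)}$ I would use the pointwise identity $\lvert\tau^m_{t,s}\rvert(x)=\lvert\tilde{\chi}_s^*\sigma^{\tilde m}\rvert(\rho_{t,s}(x))\,/\,\Vert\tilde{\chi}_s^*\sigma^{\tilde m}|_{V_{t,symp}}\Vert_{L^1(V_{t,symp})}$ (from the commutativity of $(\ref{deform})$ and Proposition~\ref{identify2}(1)), the Gaussian bound $\lvert\tilde{\chi}_s^*\sigma^{\tilde m}\rvert(y)\le C e^{2\pi s\underline{\nu}(m)}e^{-cs\Vert\mu_{T_{GC}}(y)-m\Vert^2}$ on $\mathbb{P}_{symp}$ (the computation in the proof of Proposition~\ref{sigma-m}(3), read on $\mathbb{P}$), Lemma~\ref{phi-approx}, and the lower bound on the normalizing factor, to show that for each fixed $s$ the concentration locus of $\tau^m_{t,s}$ sits inside $U_t$ once $t$ is small enough, whence $\Vert\tau^m_{t,s}\Vert_{C^0(U_t^c)}$ is exponentially small in $s$. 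In particular, for each $s$,
\[
\limsup_{t\to 0^+}\Big(\Vert\tau^m_{t,s}\Vert_{C^0(U_t^c)}+\Vert\tilde{\Phi}_t^*\tau^m_{0,s}\Vert_{C^0(U_t^c)}+\Vert\tau^m_{t,s}-\tilde{\Phi}_t^*\tau^m_{0,s}\Vert_{C^0(U_t)}\Big)\le\kappa(s),
\]
with $\kappa(s)\to 0$.

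Then I would run a diagonal selection: for each $s$ pick $t(s)>0$ so small that the bracket above evaluated at $t(s)$ is at most $\kappa(s)+\frac{1}{1+s}$ and $t(s)\le\frac{1}{1+s}$; after an obvious inductive adjustment and interpolation one may take $t\colon[0,\infty)\to\mathbb{R}$ continuous, decreasing, with $t(0)=1$, so all the extra conditions on $t(s)$ mentioned in Subsection~\ref{complex-constr} hold. Then $t(s)\to 0$, and Lemma~\ref{1st-est} with $t=t(s)$ gives the asserted estimate with
\[
C_2(s,\phi)=C_1(s,\phi)+\mathrm{vol}(\mathbb{F}_{symp})\Vert\phi\Vert_{C^0(\mathbb{F}_{symp})}\Big(\kappa(s)+\tfrac{1}{1+s}\Big)+\Vert\tilde{\Psi}_{t(s)*}\phi-\tilde{\Phi}_{t(s)}^*\tilde{\Psi}_{0*}\phi\Vert_{C^0(U_{t(s)})},
\]
and each term $\to 0$ as $s\to\infty$ (the first by Lemma~\ref{bscg}, the second since $\kappa(s)\to 0$, the third since $t(s)\to 0$ and, for the fixed $\phi$, that norm $\to 0$ as $t\to 0$). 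This proves Lemma~\ref{2nd-est}.

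The main obstacle is the middle step: showing the normalized holomorphic sections $\tau^m_{t,s}$ converge to $\tau^m_{0,s}$ on compacta of $V_{0,symp}^\circ$ \emph{and} remain uniformly small away from the Bohr--Sommerfeld region, including on the locus $V_{t,symp}\setminus V_{t,symp}^\circ$ that the gradient--Hamiltonian flow toward the singular fiber $t=0$ fails to reach. This is where one must genuinely combine flatness of the degeneration, polynomiality of the $q_I(V,t)$, the Gaussian profile of the ambient section, the approximation Lemmas~\ref{phi-approx}--\ref{psi-approx}, and a lower bound on $\Vert\tilde{\chi}_s^*\sigma^{\tilde m}|_{V_{t,symp}}\Vert_{L^1}$ uniform for $t$ near $0$ at each fixed $s$; the admissible threshold for $t$ may shrink as $s\to\infty$, but that is harmless since only the existence of some admissible $t(s)\to 0$ is needed.
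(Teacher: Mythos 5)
Your proposal is correct and follows essentially the same route as the paper: substitute $t=t(s)$ into Lemma~\ref{1st-est}, bound $\lVert\tilde{\Phi}_t^*\tau^m_{0,s}\rVert_{C^0(U_t^c)}$ by the $t$-independent ambient decay estimate, bound $\lVert\tau^m_{t,s}\rVert_{C^0(U_t^c)}$ via the ambient Gaussian bound plus a lower bound on the $L^1$ normalization for $t$ near $0$ (the paper phrases this via the $L^1$-ratio $\ge\frac12$), control $\lVert\tau^m_{t,s}-\tilde{\Phi}_t^*\tau^m_{0,s}\rVert_{C^0(U_t)}$ and the $\phi$-term by continuity as $t\to0$ (Lemmas~\ref{phi-approx}--\ref{psi-approx}), and then diagonalize. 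The only difference is bookkeeping: the paper picks a single $t_n$ working uniformly for $s$ on the compact interval $[n,n+1]$, whereas you pick $t(s)$ pointwise and then interpolate; this is a cosmetic variation, though the paper's compact-interval version makes it more transparent that the chosen $t(s)$ is actually admissible on a whole interval (not just at the sample value) once interpolated.
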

\begin{proof}
First we estimate the term $\Vert \tilde{\Phi}_t^* \tau^m_{0,s} \Vert_{C^0(U_t^c)}$ in Lemma \ref{1st-est}. 
Due to Proposition \ref{sigma-m} $(3)$, there exists $C_3(s)>0$ such that $\lim_{s \to \infty}C_3(s)=0$ and, for any $t > 0$,
\begin{equation}\label{5} 
\Vert \tilde{\Phi}_t^* \tau^m_{0,s} \Vert_{C^0(U_t^c)} = \Vert \tau^m_{0,s} \Vert_{C^0(U_0^c)}   \le C_3(s).
\end{equation}

Next we estimate other terms in Lemma \ref{1st-est}. 
In Subsection \ref{est-gh} we fixed an open set $B \subset \mathrm{Int}\Delta_{GC}$ such that $\mathrm{Int}\Delta_{GC} \cap (\mathfrak{t}_{GC})^*_{\mathbb{Z}} \subset B$ and $\overline{B} \subset \mathrm{Int}\Delta_{GC}$.
We set $U_0 = \mu_{T_{GC}}^{-1}(B) \cap V_{0,symp} \subset V_{0,symp}^\circ$ and $U_t=\Phi_t^{-1}(U_0)$.
Now we also take an open set $B_1 \subset \mathrm{Int}\Delta_{GC}$ such that $\mathrm{Int}\Delta_{GC} \cap (\mathfrak{t}_{GC})^*_{\mathbb{Z}} \subset B_1$ and $\overline{B_1} \subset B$.
Then, due to Proposition \ref{sigma-m} $(3)$, there exists $C_4(s)>0$ such that $\lim_{s \to \infty}C_4(s)=0$ and, for any $s \ge 0$ and $m \in \mathrm{Int}\Delta_{GC} \cap (\mathfrak{t}_{GC})^*_{\mathbb{Z}}$,   
\begin{align}\label{C2s}
\Vert \frac{\tilde{\chi}_s^* \sigma^{\tilde{m}}}
      {\Vert \tilde{\chi}_s^*\sigma^{\tilde{m}} |_{V_{0,symp}} \Vert_{L^1(V_{0,symp})}} \Vert_{C^0(\mathbb{P}_{symp} \setminus \mu_{T_{GC}}^{-1}(B_1))} \le C_4(s). 
\end{align}

Since $\rho_{0,s}=\rho_{0,0} \colon V_{0,sump} \to \mathbb{P}_{symp}$ for $s \ge 0$ by Proposition \ref{toric-subm} $(2)$, we have 
$$\rho_{0,s}(U_t^c) = \rho_{0,0}(U_t^c) \subset \mathbb{P}_{symp} \setminus \mu_{T_{GC}}^{-1}(B_1).$$
Note that 
$$
\lim_{t \to 0}\Vert \tilde{\underline{\chi}}_{t,s}^* (\tilde{\rho}_{t,comp}^* \sigma^{\tilde{m}}) \Vert_{L^1(V_{t,symp})}=\Vert \tilde{\underline{\chi}}_{0,s}^* (\tilde{\rho}_{0,comp}^* \sigma^{\tilde{m}}) \Vert_{L^1(V_{0,symp})} \ne 0.
$$
Since $U_t^c$ is compact, for each $n=1,2,\dots$, there exists $t_n \in(0,1]$ which is independent of $\phi$ and satisfies the following (\ref{7}) holds for each $s \in [n,n+1]$ and $t \in [0,t_n]$;
\begin{align}\label{7}
& \rho_{t,s}(U_t^c) \subset \mathbb{P}_{symp} \setminus \mu_{T_{GC}}^{-1}(B_1), ~~
\frac{\Vert \tilde{\underline{\chi}}_{t,s}^* (\tilde{\rho}_{t,comp}^* \sigma^{\tilde{m}}) \Vert_{L^1(V_{t,symp})}}{\Vert \tilde{\underline{\chi}}_{0,s}^* (\tilde{\rho}_{0,comp}^* \sigma^{\tilde{m}}) \Vert_{L^1(V_{0,symp})}} \ge \frac{1}{2}.
\end{align}
By (\ref{C2s}) and (\ref{7}) we have, for each $s \in [n,n+1]$ and $t \in [0,t_n]$,  
\begin{align}\label{6}
C_4(s) & \ge \Vert \frac{\tilde{\rho}_{t,s}^* (\tilde{\chi}_s^* \sigma^{\tilde{m}})}{\Vert \tilde{\underline{\rho}}_{0,s}^* (\tilde{\chi}_{s}^* \sigma^{\tilde{m}}) \Vert_{L^1(V_{0,symp})}} \Vert_{C^0(U_t^c)} \\ 
& = \Vert \frac{\tilde{\underline{\chi}}_{t,s}^* (\tilde{\rho}_{t,comp}^* \sigma^{\tilde{m}})}{\Vert \tilde{\underline{\chi}}_{0,s}^* (\tilde{\rho}_{0,comp}^* \sigma^{\tilde{m}}) \Vert_{L^1(V_{0,symp})}} \Vert_{C^0(U_t^c)} \nonumber \\
& =\frac{\Vert \tilde{\underline{\chi}}_{t,s}^* (\tilde{\rho}_{t,comp}^* \sigma^{\tilde{m}}) \Vert_{L^1(V_{t,symp})}}{\Vert \tilde{\underline{\chi}}_{0,s}^* (\tilde{\rho}_{0,comp}^* \sigma^{\tilde{m}}) \Vert_{L^1(V_{0,symp})}} \Vert \tau^m_{t,s} \Vert_{C^0(U_t^c)}. \nonumber 
\end{align}
By (\ref{7}) and (\ref{6}) we have, for each $s \in [n,n+1]$ and $t \in [0,t_n]$,  
\begin{equation}\label{10}
\Vert \tau^m_{t,s} \Vert_{C^0(U_t^c)} \le 2 C_4(s).
\end{equation}

Moreover, due to Lemmas \ref{phi-approx} and \ref{psi-approx}, taking smaller $t_n>0$ if necessary, we may also conclude that the following (\ref{8}) and (\ref{9}) hold for each $s \in [n,n+1]$ and $t \in [0,t_n]$; 
\begin{align}
& \lVert \tau^m_{t,s} - \tilde{\Phi}_{t}^* \tau^m_{0,s} \rVert_{C^0(U_{t})} \le \frac{1}{n+2} \hspace*{3mm}\text{for any $m \in \mathrm{Int}\Delta \cap (\mathfrak{t}_{GC})^*_{\mathbb{Z}}$}, \label{8} \\
& \lVert \tilde{\Psi}_{t *} \phi - \tilde{\Phi}_t^* \tilde{\Psi}_{0 *} \phi \rVert_{C^0(U_t)} \le \frac{\lVert \phi \rVert_{C^1(\mathbb{F}_{symp})}}{n+2} \hspace*{3mm}\text{for any $\phi \in \Gamma((L_{symp}^{\mathbb{F}})^*)$}.\label{9}
\end{align}

By Lemma \ref{1st-est} together with (\ref{5}), (\ref{10}), (\ref{8}) and (\ref{9}) we have, for each section $\phi \in \Gamma((L_{symp}^{\mathbb{F}})^*)$, $n=1,2, \dots$, $s \in [n,n+1]$ and $t \in [0,t_n]$
\begin{align*}
& \left\lvert \int_{\mathbb{F}_{symp}} \langle \phi, \tilde{\Psi}_t^*\tau^m_{t,s} \rangle  
- \int_{\mu_{T_{GC}}^{-1}(m) \cap V_{0,symp}} \langle \tilde{\Psi}_{0 *}\phi , \delta_{m} \rangle \, d\theta_m \right\rvert\\
& \le C_1(s, \phi) + \mathrm{vol}(\mathbb{F}_{symp}) \Vert \phi \Vert_{C^0(\mathbb{F}_{symp})} (2 C_4(s) + C_3(s)) \\
& \hspace*{3mm} + \mathrm{vol}(\mathbb{F}_{symp}) \Vert \phi \Vert_{C^0(\mathbb{F}_{symp})} \frac{1}{s+1} + \frac{\lVert \phi \rVert_{C^1(\mathbb{F}_{symp})}}{s+1}.  
\end{align*}
 
We can take a continuous decreasing function $t \colon [0, \infty) \to \mathbb{R}$ with $t(0)=1$ and $\lim_{s \to \infty}t(s)=0$ such that $t(n) \le t_n$ for $n >> 0$.
Thus we finish the proof of Lemma \ref{2nd-est}. 
\end{proof}

We use $t(s)$ in Lemma \ref{2nd-est} to define the complex structure $ J_s $ by (\ref{js}) and the holomorphic section $\sigma^m_s$ by (\ref{sigma-ms}).
If we recall $\tilde{\Psi}_{t(s)}^* \tau^m_{t(s),s}= \frac{\sigma^m_s}{\Vert \sigma^m_s \Vert_{L^1(\mathbb{F}_{symp})}}$, then we have
$$ 
\lim_{s \to \infty} \int_{\mathbb{F}_{symp}} \left\langle \phi, \frac{\sigma^m_s}{\Vert \sigma^m_s \Vert_{L^1(\mathbb{F}_{symp})}} \right\rangle 
= \int_{\mu_{T_{GC}}^{-1}(m) \cap V_{0,symp}} \langle \tilde{\Psi}_{0 *}\phi , \delta_{m} \rangle \, d\theta_m .
$$
Due to Corollary \ref{nnu2}, if we define a covariantly constant section $\delta_m^{\mathbb{F}}$ of $(L^{\mathbb{F}}, h^{\mathbb{F}}, \nabla^{\mathbb{F}})|_{\mu_{GC}^{-1}(m)}$ by pulling 
$\delta_{m}$ on $\mu_{T_{GC}}^{-1}(m) \cap V_{0,symp}$ back by $\tilde{\Psi}_0$, then we have the desired convergence in Theorem~\ref{mainth} (4).
{\footnotesize } 
\vspace{5mm}
\noindent 
Department of Mathematics and Computer Science \\
Mount Allison University\\
67 York St, Sackville, NB, E4L 1E6, Canada \\
mhamilton@mta.ca 
\vspace{5mm}\\
Graduate School of Mathematical Sciences \\
University of Tokyo \\
3-8-1 Komaba, Meguro-ku, Tokyo, 153-8914, Japan \\
konno@ms.u-tokyo.ac.jp
\end{document}